\newcommand{\be}{\begin{equation}}
\newcommand{\ee}{\end{equation}}
\newcommand\bea{\begin{eqnarray}}
\newcommand\eea{\end{eqnarray}}
\newcommand{\bean}{\begin{eqnarray*}}
\newcommand{\eean}{\end{eqnarray*}}
\newcommand\bcase{\begin{numcases}{}}
\newcommand\ecase{\end{numcases}}
\begin{document}

\title{Uniform convergence and stability of linearized fourth-order conservative compact scheme for Benjamin-Bona-Mahony-Burgers' equation
\thanks{Qifeng Zhang was supported in part by Natural Science Foundation of China (No. 11501514),
in part by the Natural Sciences Foundation of Zhejiang Province under Grant LY19A010026,
in part by project funded by China Postdoctoral Science Foundation under Grant 2018M642131 when he studied in Southeast University.}
}
\titlerunning{Uniform convergence and stability of compact schemes for BBMB equation}        
\author{Qifeng Zhang \and Lingling Liu}
\institute{\Letter\; Qifeng Zhang \at Department of Mathematics, Zhejiang Sci-Tech University, Hangzhou, 310018, China \\Department of Mathematics, Southeast University, Nanjing, 210096, China \\
               \email{zhangqifeng0504@gmail.com}
		\and Lingling Liu \at Department of Mathematics, Zhejiang Sci-Tech University, Hangzhou, 310018, China\\ \email{2710751137@qq.com}
}

\date{Received: date / Accepted: date}

\maketitle

\begin{abstract}
In the paper,
a newly developed three-point fourth-order compact operator is utilized to construct an efficient compact finite difference scheme
for the Benjamin-Bona-Mahony-Burgers' (BBMB) equation.
Detailed derivation is carried out based on the reduction order method together with a three-level linearized technique.
The conservative invariant, boundedness and unique solvability are studied at length. The uniform convergence is proved by the technical
energy argument with the optimal convergence order $\mathcal{O}(\tau^2+h^4)$ in the sense of the maximum norm.
The almost unconditional stability can be achieved based on the uniform boundedness of the numerical solution.
The present scheme is very efficient in practical computation since only a system of linear equations with a symmetric circulant matrix needing to be solved
 at each time step.  The extensive numerical examples
verify our theoretical results and demonstrate the superiority of the scheme when compared with state-of-the-art those in the references.
\keywords{BBMB equation \and Reduction order method \and Linearized compact scheme \and Boundedness \and Uniform convergence}
\end{abstract}

\section{Introduction}
\label{intro}
The classical nonlinear Benjamin-Bona-Mahony (BBM) equation can describe the unidirectional propagation of weakly nonlinear
long waves in the presence of dispersion as follows
\begin{align}\label{equation_1}
  u_{t}-\frac{\mu}{6} u_{xxt}+ \frac{3\varepsilon}{2} uu_{x}+u_{x} = 0, \quad x\in  \mathbb{R},\; 0<t\leqslant T,
\end{align}
where $\varepsilon>0$ and $\mu$ are the parameters with the same order \cite{BBM1972}.
Compared with the well-known Korteweg-de Vries (KdV) equation
\begin{equation}\label{equation_2}
  u_t+u_x+\frac{3\varepsilon}{2}uu_x+\frac{\mu}{6}u_{xxx}=0,\quad x\in\mathbb{R},\; 0<t\leqslant T,
\end{equation}
\eqref{equation_1} is proposed as an analytically advantageous alternative.
\eqref{equation_1} and \eqref{equation_2} are both derived from the Green-Naghdi equations and they are
asymptotically equivalent in the limit $\varepsilon=\mu\rightarrow 0$ since $u_{xxx} = u_{xxt}+\mathcal{O}(\mu)$,
but enjoying different properties, see \cite{Lan2013} for detailed explanation.
In many applications, when the dissipation effect cannot be ignored, $-\nu u_{xx}$ have to be added and \eqref{equation_1} becomes
the known BBMB equation as
\begin{align}
u_{t}-\frac{\mu}{6} u_{xxt}+\frac{3\varepsilon}{2} uu_{x}+ u_{x}-\nu u_{xx}=0,\quad x\in\mathbb{R},\; 0<t\leqslant T,\label{equation_3}
\end{align}
which describes the propagation of small-amplitude long waves in a nonlinear dispersive media.
For the well-posedness, existence, uniqueness, regularity results, long time dynamics and the numerical simulation
for \eqref{equation_3} and its special cases are referred to \cite{MM1998, MM1999,KMO2000,Zhang1995,MS2005,Wang2014,Wang2016,PZ2002,LXC2020}.

In this paper, we are aimed to develop and analyze a high-order conservative difference
approximation for the BBMB equation as
\begin{align}
u_{t}-\mu u_{xxt}+\gamma uu_{x}+\kappa u_{x}-\nu u_{xx}=0,\quad x\in\mathbb{R},\; 0<t\leqslant T,\label{eq1}
\end{align}
with the periodic boundary condition
\begin{align}
u(x,t)=u(x+L,t),\quad x\in\mathbb{R},\; 0<t\leqslant T,\label{per}
\end{align}
and the initial-value condition
\begin{align}
u(x,0)=\varphi(x),\quad x\in\mathbb{R},\label{eq2}
\end{align}
where $\mu$ and $\gamma$ are non-negative constants, $\kappa$ and $\nu$ are parameters and $L$ denotes the spatial period.

In order to explore the solutions and their properties of the BBMB equation,
researchers racked their wits to develop various analytical methods for seeking the exact solutions of the BBMB equation.
For instance, Yin \emph{et al.} \cite{YCJ2008} employed the weighted energy method to investigate the time decay rate
of traveling waves of Cauchy problem of the BBMB equation. Est\'{e}vez \emph{et al.} \cite{EKNN2009}
studied the travelling wave solutions for the generalized BBMB equation systematically by using
the factorization technique. Besse \emph{et al.} \cite{BMN2018} developed the exact artificial boundary
conditions for the linearized BBM equation. Al-Khaled \emph{et al.} \cite{KMA2005}
considered solitary wave solutions of the BBMB equation by using the  decomposition method.
Fakhari \emph{et al.} \cite{FDE2007} approximated the explicit solutions of the nonlinear BBMB equation with high-order nonlinear term
via the homotopy analysis method.  Tari \emph{et al.} \cite{TG2007} used He's methods to obtain the explicit solutions
of the BBMB equation and compared with the exact solutions. Ganji \emph{et al.} \cite{GGB2009} solved the special form solutions
of the BBMB equation by the Exp-Function method. Based on the well-known tanh-coth method,
Cesar \emph{et al.} \cite{SSF2010} obtained new periodic soliton solutions for the generalized BBMB equation.
Noor \emph{et al.} \cite{NNA2011} constructed some new solitary solutions of the BBM equation by using the exp-function method.
Abbasbandy \cite{AS2010}  used the first integral method  to find some new exact solutions for the BBMB equation and
Bruz\'{o}n \cite{BGR2016}  studied some nontrivial conservation laws for the BBMB equation with the help of the multiplier's method.

On the other hand, there have been many attempts to approximate the solutions for the BBMB equation and its simplified version numerically.
For example, Guo \cite{GS2000} proposed a Laguerre-Galerkin method to solve the BBM equation on a semi-infinite interval.
Omrani \cite{KO2006} considered a fully discrete Galerkin method for the BBM equation. Soon afterwards, Omrani \emph{et al.} \cite{OA2008} used
Crank-Nicolson-type implicit finite difference method to solve the BBMB equation with the second-order accuracy in maximum norm.
They \cite{KKAO2008} further employed Galerkin finite element method in space combined with the implicit Euler method in
time for solving the generalized BBMB equation. Berikelashvili \emph{et al.} \cite{BM2011} explored a linearized difference scheme for solving
the regularized long-wave equation, which can be viewed as a special case of the BBMB equation with $\nu=0$.
They \cite{BM2014} also analyzed the convergence of a type of the difference scheme for the generalized BBMB equation.
Based on the meshless method of radial basis functions, Dehghan \emph{et al.} \cite{DAM2014} solved a high dimensional generalized BBMB equation.
They \cite{DAM2015} further considered the interpolating element-free Galerkin technique for the high dimensional BBMB equation.
Zarebnia \emph{et al.} \cite{ZP2016,SJ2016} used the collocation method and spectral meshless radial point interpolation, respectively, to solve the
BBMB equation. Based on hybridization of Lucas and Fibonacci polynomials,  Oru\c{c} \emph{et al.} \cite{OO2017} solved the generalized BBMB equation
in one and two dimension, respectively. Kundu \emph{et al.} \cite{KPK2018} proposed a semidiscrete Galerkin method and discussed
stabilization results for the semidiscrete  scheme with optimal error estimate. Zhang  \emph{et al.} \cite{ZLZ2020} established two linearized implicit difference schemes for the BBMB equation, in which the convergence orders both were two.

A review of all the above numerical methods reveals that higher-order algorithms are still scarce,
let alone the uniform error estimate of the higher-order algorithms.
To the authors' best knowledge, only Mohebbi and Faraz \cite{MF2017} propose a fourth-order algorithm for solving the BBMB equation with five points
in space and obtains the infinite error estimate.
However, when deal with the points near boundary, ghost points or fictitious points are requisite.
In addition, the stability in \cite{MF2017} is also missing.
In order to avoid the difficulty caused by the discretization near the boundary points,
we first developed three-point fourth-order compact technique for the Burgers' equation in \cite{WZS2019}
and further extended it to the BBMB equation in current paper.
Moreover, we extensively and deeply studied the convergence and stability of the compact difference scheme for the BBMB equation.

The compact difference scheme as one of the most practical numerical techniques
has the significant advantages over standard finite difference methods. Specifically:
I) A smaller matrix stencil generates higher order accuracy;
II) A larger stability domain allows larger spatial and temporal step sizes;
III) It owns a better resolution for high frequency waves;
IV) It is more suitable for long time integrations;
V) Fewer boundary point makes the discretization of the boundary easier.

The compact difference scheme in the present paper not only possess all of these advantages, but also does not incur extra computational
cost. Furthermore, our scheme is linearly implicit with the exact well-defined conservative invariant.
The main difficulties for the high-order approximation of the strong nonlinear term $uu_x$ involving the optimal convergence and stability are completely overcome based on the newly discovered compact operator, which makes the numerical analysis feasible and toilless.

 The main contribution lies in that the maximum error estimate and
 the optimal convergence order $O(\tau^2+h^4)$ are obtained. The proof of the convergence in
 pointwise sense is novel and technical. Compared our numerical results with those calculated in
 \cite{MF2017} is carried out, which demonstrates the effectiveness and advantage of the present algorithm.
 Moreover, the almost unconditional stability in the maximum norm is also proved in detail.

The rest of the paper is organized as follows. In Sect. \ref{Sec:2}, some requisite notations and useful lemmas are presented.
A three-level linearized compact difference scheme is derived in Sect. \ref{Sec:3} based on the reduction order method.
Conservative invariant and boundedness are obtained in Sect. \ref{Sec:4}.
The unique solvability is proved strictly in Sect. \ref{Sec:5}.
The uniform convergence and stability are proved at length in Sect. \ref{Sec:6}, which are the main part of the paper.
Several numerical experiments are presented in Sect. \ref{Sec:7} followed by a conclusion in Sect. \ref{Sec:8}.

\section{Notations and Lemmas}
\label{Sec:2}
\setcounter{equation}{0}
We firstly introduce some useful notations.
Take two positive integers $M$, $N$, let $h=L/M$, $\tau=T/N$.
Denote $x_{i}= ih,\; i\in Z$,
$t_{k}=k\tau, \;0\leqslant k \leqslant N$, $t_{k+\frac{1}{2}}=(t_{k}+t_{k+1})/2$;
$\Omega_{h}=\{x_{i}\,|\,x_{i}=ih,\; i\in Z\}$, $\Omega_{\tau}=\{t_{k}\,|\,t_{k}=k\tau,\; 0\leqslant k\leqslant N\}$,
$\Omega_{h\tau}=\Omega_{h}\times\Omega_{\tau}$. For any grid function $u=\{u_{i}^{k}\,|\,i\in Z,\;0\leqslant k\leqslant N\}$
defined on $\Omega_{h\tau}$, introduce the following notations
\begin{align*}
& \delta_{x}u_{i+\frac{1}{2}}^{k}=\frac{1}{h}(u_{i+1}^{k}-u_{i}^{k}), \; \delta^{2}_{x}u_{i}^{k}=\frac{1}{h}(\delta_{x}u_{i+\frac{1}{2}}^{k}-\delta_{x}u_{i-\frac{1}{2}}^{k}), \; \Delta_{x}u_{i}^{k}=\frac{1}{2h}(u_{i+1}^{k}-u_{i-1}^{k}), \\
& u_{i}^{k+\frac{1}{2}}=\frac{1}{2}(u_{i}^{k}+u_{i}^{k+1}),\; u_{i}^{\bar{k}}=\frac{1}{2}(u_{i}^{k-1}+u_{i}^{k+1}), \; \delta_{t}u_{i}^{k+\frac{1}{2}}=\frac{1}{\tau}(u_{i}^{k+1}-u_{i}^{k}),\;  \Delta_{t}u_{i}^{k}=\frac{1}{2\tau}(u_{i}^{k+1}-u_{i}^{k-1}).
\end{align*}

Denote
\begin{align*}
\mathcal{U}_{h}=\left\{u\,|\,u=\{u_{i}\},\; u_{i+M}=u_{i}\right\}.
\end{align*}
For any grid functions $u$, $w\in\mathcal{U}_{h}$, define the discrete inner products
\begin{align*}
(u,w)=h\sum_{i=1}^{M}u_{i}w_{i},\quad \langle \delta_{x}u,\delta_{x}w\rangle=h\sum_{i=1}^{M}(\delta_{x}u_{i-\frac{1}{2}})(\delta_{x}w_{i-\frac{1}{2}})
\end{align*}
and the corresponding norms (seminorm)
\begin{align*}
\|u\|=\sqrt{(u,u)},\qquad |u|_{1}=\sqrt{\langle\delta_{x}u,\delta_{x}u\rangle},\qquad
\|u\|_{\infty}=\max\limits_{1\leqslant i\leqslant M}|u_{i}|.
\end{align*}
Moreover, define the function (see \cite{Guobook})
\begin{align*}
\psi(u,v)_{i}=\frac{1}{3}[u_{i}\Delta_{x}v_{i}+\Delta_{x}(uv)_{i}],\quad 1\leqslant i\leqslant M.
\end{align*}
\begin{lemma}\label{lemma1} {\rm \cite{Sunbook}}
For any grid functions $u$, $w\in\mathcal{U}_{h}$, we have
\begin{align*}
\|u\|_{\infty}\leqslant\frac{\sqrt{L}}{2}|u|_{1},\quad |u|_{1}\leqslant\frac{2}{h}\|u\|,\quad \|u\|\leqslant\frac{L}{\sqrt{6}}|u|_{1},
\quad (\delta_{x}^{2}u,w)=-\langle \delta_{x}u,\delta_{x}w\rangle.
\end{align*}
\end{lemma}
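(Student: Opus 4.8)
The plan is to establish the four assertions in sequence, relying only on periodicity of the grid functions (so that summation by parts produces no boundary terms) and on elementary Cauchy--Schwarz arguments. First I would prove the identity $(\delta_x^2 u, w) = -\langle \delta_x u, \delta_x w\rangle$, since it is the cleanest and it underpins the sharp constants in the other three. Writing out $(\delta_x^2 u, w) = h\sum_{i=1}^M \frac{1}{h}\bigl(\delta_x u_{i+1/2} - \delta_x u_{i-1/2}\bigr) w_i$ and shifting the index in the first half of the sum, the periodicity $u_{i+M}=u_i$, $w_{i+M}=w_i$ guarantees the telescoped boundary contributions cancel, leaving $-h\sum_{i=1}^M (\delta_x u_{i-1/2})(w_i - w_{i-1})/h = -\langle \delta_x u,\delta_x w\rangle$. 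This is routine discrete summation by parts.

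Next I would prove $|u|_1 \leqslant \frac{2}{h}\|u\|$: directly, $\delta_x u_{i-1/2} = (u_i - u_{i-1})/h$, so $(\delta_x u_{i-1/2})^2 \leqslant \frac{2}{h^2}(u_i^2 + u_{i-1}^2)$ by the elementary inequality $(a-b)^2\leqslant 2a^2+2b^2$; summing over $i$ and using periodicity to identify $\sum u_{i-1}^2 = \sum u_i^2$ gives $|u|_1^2 \leqslant \frac{4}{h^2}\|u\|^2$. For the Poincar\'e-type bound $\|u\| \leqslant \frac{L}{\sqrt 6}|u|_1$, I would work on the space of mean-zero periodic grid functions first — note that $\delta_x u$ annihilates constants, so both sides are unchanged if we subtract the mean, hence it suffices to treat $\sum_{i=1}^M u_i = 0$ — and then invoke the discrete Wirtinger/Poincar\'e inequality, whose optimal constant for period $L$ is exactly $L^2/6$ in this $\ell^2$ setting; this is the standard reference result attributed to \cite{Sunbook}, proved either via discrete Fourier analysis (the smallest nonzero eigenvalue of $-\delta_x^2$ on the periodic mesh) or by a telescoping-and-Cauchy--Schwarz estimate. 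Finally, for $\|u\|_\infty \leqslant \frac{\sqrt L}{2}|u|_1$: for any indices $i,j$ one has $u_i - u_j = \sum_{\ell} (\delta_x u_{\ell-1/2})\,h$ along the shorter of the two arcs of the periodic mesh joining $j$ to $i$, which has length at most $L/2$; Cauchy--Schwarz gives $|u_i-u_j| \leqslant \sqrt{L/2}\,|u|_1$, and then choosing $j$ so that $u_j$ has the sign opposite to (or is the minimiser against) $u_i$, together with the mean-zero reduction, upgrades the difference bound to the stated $\|u\|_\infty$ bound with constant $\sqrt L/2$.

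The main obstacle is pinning down the \emph{sharp} constants $L/\sqrt 6$ and $\sqrt L/2$ rather than merely dimensionally correct ones: the naive arc-length Cauchy--Schwarz argument for $\|u\|_\infty$ and the naive telescoping argument for $\|u\|$ each lose a factor, so the clean route is to diagonalise $-\delta_x^2$ by the discrete Fourier transform on $\mathcal U_h$, compute $|u|_1^2 = \sum_k \lambda_k |\hat u_k|^2$ with $\lambda_k = \frac{4}{h^2}\sin^2(\pi k/M) \geqslant \frac{4}{h^2}\cdot\frac{4}{M^2}\cdot(\text{something})$ — more precisely bound $\lambda_k$ below by $6/L^2$ times an appropriate factor on the nonzero modes — and read off both Poincar\'e and the $\ell^\infty$ embedding from Parseval. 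Since all four inequalities are quoted from \cite{Sunbook}, in the write-up I would either cite that source for the two sharp constants and give full details only for the two elementary estimates, or present the Fourier computation once and harvest all four consequences from it.
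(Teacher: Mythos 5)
The paper offers no proof of this lemma at all -- it is quoted verbatim from \cite{Sunbook} -- so there is no ``paper's route'' to compare against; your proposal has to stand on its own. Your treatments of the two elementary items are fine: the summation-by-parts identity $(\delta_x^2u,w)=-\langle\delta_xu,\delta_xw\rangle$ via an index shift using periodicity, and the inverse estimate $|u|_1\leqslant\frac{2}{h}\|u\|$ via $(a-b)^2\leqslant 2a^2+2b^2$, are both correct.

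The genuine gap is in the first and third inequalities. Your reduction ``both sides are unchanged if we subtract the mean'' is false: the seminorm $|u|_1$ is invariant under subtracting $\bar u$, but $\|u\|$ and $\|u\|_\infty$ are not, and since $\|u\|^2=\|u-\bar u\|^2+L\bar u^2\geqslant\|u-\bar u\|^2$, a bound proved for $u-\bar u$ gives no bound for $u$. This is not a repairable slip under the stated hypotheses: for a nonzero constant $u\in\mathcal{U}_h$ one has $|u|_1=0$ while $\|u\|_\infty>0$ and $\|u\|>0$, so $\|u\|_\infty\leqslant\frac{\sqrt L}{2}|u|_1$ and $\|u\|\leqslant\frac{L}{\sqrt 6}|u|_1$ simply cannot hold for all periodic grid functions. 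Your discrete-Fourier programme (spectrum of $-\delta_x^2$, Parseval, $\sum_{k\neq 0}\lambda_k^{-1}$) does deliver both estimates, with room to spare, but only on the mean-zero subspace; it cannot remove that hypothesis. In \cite{Sunbook} these embeddings are proved for grid functions vanishing at the endpoints ($u_0=u_M=0$), which is the setting the constants $\frac{\sqrt L}{2}$ and $\frac{L}{\sqrt 6}$ come from (via the elementary telescoping/Cauchy--Schwarz argument, not as sharp periodic constants -- your claim that $L^2/6$ is the optimal periodic Wirtinger constant is also off, the sharp mean-zero value being $h^2/\bigl(4\sin^2(\pi h/L)\bigr)\to L^2/(4\pi^2)$). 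So a correct write-up must either add a hypothesis (mean zero, or a vanishing grid value) to items one and three, or prove them in the Dirichlet setting as the cited source does; as literally stated for all of $\mathcal{U}_h$ they are not provable, and your proposal papers over exactly this point.
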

\begin{lemma}\label{lemma2}
 For any grid functions $u$, $w\in\mathcal{U}_{h}$, we have
\begin{align*}
(\psi(u,w),w)=0,\quad (\Delta_{x}u,u)=0,\quad (\Delta_xu, \delta_x^2 u)=0.
\end{align*}
\end{lemma}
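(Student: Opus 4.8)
The plan is to derive all three identities from one structural fact — that the centered difference operator $\Delta_x$ is skew-symmetric with respect to the discrete inner product $(\cdot,\cdot)$ on periodic grid functions — together with the summation-by-parts relation already recorded in Lemma~\ref{lemma1}.

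First I would establish the skew-symmetry $(\Delta_x u, v) = -(u, \Delta_x v)$ for all $u, v \in \mathcal{U}_h$. Writing $(\Delta_x u, v) = \frac{1}{2}\sum_{i=1}^{M}(u_{i+1}-u_{i-1})v_i$ and re-indexing each of the two resulting sums by one unit — which is legitimate because $u_{i+M}=u_i$ and $v_{i+M}=v_i$ make the summands $M$-periodic in $i$, so no boundary terms survive — transfers the difference from $u$ onto $v$ with a change of sign. Taking $v=u$ gives at once the middle identity $(\Delta_x u, u)=0$.

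For the first identity I would expand, using the definition of $\psi$,
$(\psi(u,w),w) = \frac{1}{3}\big[\,h\sum_{i=1}^{M}u_i w_i\,\Delta_x w_i + (w,\Delta_x(uw))\,\big]$,
where $uw\in\mathcal{U}_h$ denotes the grid function with entries $u_iw_i$. Applying the skew-symmetry to the second term yields $(w,\Delta_x(uw)) = -(\Delta_x w, uw) = -h\sum_{i=1}^{M}(\Delta_x w_i)\,u_i w_i$, which cancels the first term exactly, leaving $0$. For the last identity I would instead use the elementary algebraic observation $\Delta_x u_i = \tfrac12\big(\delta_x u_{i+\frac12}+\delta_x u_{i-\frac12}\big)$ and $\delta_x^2 u_i = \tfrac1h\big(\delta_x u_{i+\frac12}-\delta_x u_{i-\frac12}\big)$, so that $(\Delta_x u_i)(\delta_x^2 u_i) = \tfrac{1}{2h}\big[(\delta_x u_{i+\frac12})^2-(\delta_x u_{i-\frac12})^2\big]$; multiplying by $h$ and summing over $i=1,\dots,M$ makes the sum telescope, and periodicity closes the telescope to give $0$. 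Alternatively one may write $(\Delta_x u,\delta_x^2 u) = -\langle\delta_x u,\delta_x(\Delta_x u)\rangle$ via Lemma~\ref{lemma1}, use $\delta_x\Delta_x=\Delta_x\delta_x$, and invoke the skew-symmetry of $\Delta_x$ on the $\langle\cdot,\cdot\rangle$ inner product.

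There is no serious obstacle here: every step is a discrete summation-by-parts made exact by the periodic boundary condition. The only points requiring care are bookkeeping the index shifts so that no boundary contributions are silently dropped, and — should one take the alternative route for the third identity — verifying that the staggered difference $\delta_x$ commutes with $\Delta_x$ and that $\Delta_x$ remains skew-symmetric with respect to $\langle\cdot,\cdot\rangle$ on the half-integer grid.
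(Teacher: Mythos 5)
Your proof is correct, and it differs from the paper's in a worthwhile way. The paper does not prove the first two identities at all --- it cites them from the reference [SS2015] --- and establishes only the third, by writing $(\Delta_x u,\delta_x^2u)=-(\delta_x(\Delta_x u),\delta_x u)$ via summation by parts, verifying the commutation $\delta_x(\Delta_x u)_i=\Delta_x(\delta_x u)_i$, and then invoking the second identity applied to the periodic grid function $\delta_x u$; this is exactly the ``alternative route'' you sketch at the end. Your primary argument is more self-contained: you derive the skew-symmetry $(\Delta_x u,v)=-(u,\Delta_x v)$ directly by re-indexing over the periodic grid, which gives the second identity at once and lets you prove the first by cancelling $h\sum_i u_iw_i\Delta_x w_i$ against $(w,\Delta_x(uw))$, instead of appealing to the cited reference. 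For the third identity your pointwise factorization
\begin{align*}
(\Delta_x u_i)(\delta_x^2u_i)=\frac{1}{2h}\Bigl[\bigl(\delta_x u_{i+\frac12}\bigr)^2-\bigl(\delta_x u_{i-\frac12}\bigr)^2\Bigr]
\end{align*}
followed by a periodic telescoping sum is arguably cleaner than the paper's route, since it bypasses both the commutation identity and the half-grid application of skew-symmetry (the only point to keep in mind there, as you note, is that $\delta_x u$ inherits $M$-periodicity from $u$, so the second identity does apply). In short, your version buys a fully elementary, reference-free proof of all three identities; the paper's buys brevity by outsourcing two of them and reusing its own Lemma 2.1 machinery for the third.
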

\begin{proof}
  The first and second equalities come from \cite{SS2015}. We will prove the third equality briefly below.
 According to the notations defined previously, we have
   \begin{align*}
   \delta_{x}(\Delta_{x}u)_{i}&=\;\frac{1}{h}\left[(\Delta_{x}u)_{i+\frac{1}{2}}-(\Delta_{x}u)_{i-\frac{1}{2}}\right]\\
   &=\;\frac{1}{h}\left[\frac{1}{2h}(u_{i+\frac{3}{2}}-u_{i-\frac{1}{2}})-\frac{1}{2h}(u_{i+\frac{1}{2}}-u_{i-\frac{3}{2}})\right]\\
   &=\;\frac{1}{2h}\left[\frac{1}{h}(u_{i+\frac{3}{2}}-u_{i+\frac{1}{2}})-\frac{1}{h}(u_{i-\frac{1}{2}}-u_{i-\frac{3}{2}})\right]\\
   &=\;\frac{1}{2h}(\delta_{x}u_{i+1}-\delta_{x}u_{i-1})\\
   &=\;\Delta_{x}(\delta_{x}u)_{i}.
  \end{align*}
  From the definition of the discrete inner products and the second equality, we have
  \begin{align*}
  (\Delta_{x}u, \delta_{x}^2 u)&=\;-(\delta_{x}(\Delta_{x}u),\delta_{x}u)\\
  &=\;-h\sum_{i=1}^{M}\delta_{x}(\Delta_{x}u)_{i}\cdot\delta_{x}u_{i}\\
  &=\;-h\sum_{i=1}^{M}\Delta_{x}(\delta_{x}u)_{i}\cdot\delta_{x}u_{i}\\
  &=\;-(\Delta_{x}(\delta_{x}u),\delta_{x}u)=\;0.
  \end{align*}
  This completes the proof.
\end{proof}

\begin{lemma}\label{lemma3}
Let $f(x)\in \mathrm{C}^{5}[x_{i-1},x_{i+1}]$ and denote $F_{i}=f(x_{i})$ and $G_{i}=f''(x_{i})$, then we have
\begin{align*}
&f(x_{i})f'(x_{i})=\psi(F,F)_{i}-\frac{h^{2}}{2}\psi(G,F)_{i}+\mathcal{O}(h^{4}),\quad 1\leqslant i\leqslant M,\\
&f'(x_{i})=\Delta_{x}F_{i}-\frac{h^{2}}{6}\Delta_{x}G_{i}+\mathcal{O}(h^{4}),\quad 1\leqslant i\leqslant M,\\
&f''(x_{i})=\delta_{x}^{2}F_{i}-\frac{h^{2}}{12}\delta_{x}^{2}G_{i}+\mathcal{O}(h^{4}),\quad 1\leqslant i\leqslant M.
\end{align*}
\end{lemma}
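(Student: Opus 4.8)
The plan is to derive all three identities from the Taylor expansion of $f$ about $x_i$ on the three-point stencil $\{x_{i-1},x_i,x_{i+1}\}$, keeping terms through order $h^2$ and controlling the remainders by the boundedness of $f^{(5)}$ on $[x_{i-1},x_{i+1}]$. First I would record the elementary expansions for the plain difference operators applied to the smooth grid function $F$: from $F_{i\pm1}=f(x_i\pm h)$ one obtains $\Delta_x F_i=f'(x_i)+\frac{h^2}{6}f'''(x_i)+\mathcal{O}(h^4)$ and $\delta_x^2 F_i=f''(x_i)+\frac{h^2}{12}f^{(4)}(x_i)+\mathcal{O}(h^4)$; applied instead to the less-smooth grid function $G=f''$, the same computations give only the coarser estimates $\Delta_x G_i=f'''(x_i)+\mathcal{O}(h^2)$ and $\delta_x^2 G_i=f^{(4)}(x_i)+\mathcal{O}(h^2)$, which is all that is needed since these quantities enter the formulas already multiplied by $h^2$.

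With these in hand, the second and third identities are essentially immediate. Substituting $\Delta_x G_i=f'''(x_i)+\mathcal{O}(h^2)$ into $\Delta_x F_i-\frac{h^2}{6}\Delta_x G_i$, the term $\frac{h^2}{6}f'''(x_i)$ cancels against the leading truncation error of $\Delta_x F_i$, leaving $f'(x_i)+\mathcal{O}(h^4)$; likewise $\delta_x^2 F_i-\frac{h^2}{12}\delta_x^2 G_i=f''(x_i)+\mathcal{O}(h^4)$ because the $\frac{h^2}{12}f^{(4)}(x_i)$ contributions cancel. In short, the $G$-correction is chosen precisely to annihilate the leading $\mathcal{O}(h^2)$ error of the bare operator.

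The first identity is the substantive one, and I expect the bookkeeping for the nonlinear operator $\psi$ to be the main obstacle. I would expand $\psi(F,F)_i=\frac13\big[F_i\,\Delta_x F_i+\Delta_x(F^2)_i\big]$ term by term: $F_i\,\Delta_x F_i=ff'+\frac{h^2}{6}ff'''+\mathcal{O}(h^4)$, and, using $(f^2)'''=6f'f''+2ff'''$, $\Delta_x(F^2)_i=(f^2)'+\frac{h^2}{6}(f^2)'''+\mathcal{O}(h^4)=2ff'+h^2 f'f''+\frac{h^2}{3}ff'''+\mathcal{O}(h^4)$ (all derivatives evaluated at $x_i$). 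Adding and dividing by $3$ yields $\psi(F,F)_i=ff'+\frac{h^2}{6}ff'''+\frac{h^2}{3}f'f''+\mathcal{O}(h^4)$. On the other hand, since $\Delta_x$ on $\mathrm{C}^3$ data is only $\mathcal{O}(h^2)$ accurate, $\psi(G,F)_i=\frac13\big[f''f'+(f''f)'\big]+\mathcal{O}(h^2)=\frac23 f'f''+\frac13 ff'''+\mathcal{O}(h^2)$, whence $\frac{h^2}{2}\psi(G,F)_i=\frac{h^2}{6}ff'''+\frac{h^2}{3}f'f''+\mathcal{O}(h^4)$. This is exactly the $\mathcal{O}(h^2)$ part of $\psi(F,F)_i$, so $\psi(F,F)_i-\frac{h^2}{2}\psi(G,F)_i=f(x_i)f'(x_i)+\mathcal{O}(h^4)$, as claimed.

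The one place to be careful is the remainder accounting: the $h^4$-order remainders of $\Delta_x F_i$, $\Delta_x(F^2)_i$ and $\delta_x^2 F_i$ must be bounded uniformly on the fixed stencil in terms of fifth derivatives (note that $F^2\in\mathrm{C}^5$ because $f\in\mathrm{C}^5$), while the $G$-terms such as $\Delta_x(GF)_i$ carry only an $\mathcal{O}(h^2)$ remainder but appear multiplied by $h^2$; collecting all these bounds completes the proof. Everything beyond the initial Taylor expansions is a routine, if slightly tedious, matching of coefficients.
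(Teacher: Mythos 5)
Your route is a self-contained Taylor-expansion argument, whereas the paper does not actually prove the lemma: it cites \cite{WZS2019} for the first identity, \cite{Sun2009} for the third, and dismisses the second as immediate. For the first two identities your derivation is correct and complete under the stated hypothesis $f\in \mathrm{C}^{5}$: only the operator $\Delta_{x}$ appears, its Lagrange remainders are genuinely $\mathcal{O}(h^{4})$ for $\mathrm{C}^{5}$ data (and $\mathcal{O}(h^{2})$ for $\mathrm{C}^{3}$ data such as $G$ and $GF$), and your coefficient matching, including $(f^{2})'''=6f'f''+2ff'''$, is right. What your approach buys over the paper's is precisely this explicit bookkeeping, which the paper leaves to the references.

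There is, however, a genuine gap in your treatment of the third identity. The two expansions you invoke, $\delta_{x}^{2}F_{i}=f''(x_{i})+\frac{h^{2}}{12}f^{(4)}(x_{i})+\mathcal{O}(h^{4})$ and $\delta_{x}^{2}G_{i}=f^{(4)}(x_{i})+\mathcal{O}(h^{2})$, are not consequences of $f\in \mathrm{C}^{5}$: the second requires $G=f''\in \mathrm{C}^{4}$, i.e.\ $f\in \mathrm{C}^{6}$ (with $\mathrm{C}^{5}$ one only gets $\delta_{x}^{2}G_{i}=f^{(4)}(x_{i})+\mathcal{O}(h)$), and the first also needs $\mathrm{C}^{6}$ (or $f^{(5)}$ Lipschitz), because with merely bounded $f^{(5)}$ the remainder of the symmetric second difference after the $\frac{h^{2}}{12}f^{(4)}$ term is $\frac{1}{24h^{2}}\int_{0}^{h}(h-u)^{4}\bigl[f^{(5)}(x_{i}+u)-f^{(5)}(x_{i}-u)\bigr]\,du=\mathcal{O}(h^{3})$ only; the fifth-order terms do not cancel, in contrast to the antisymmetric operator $\Delta_{x}$ used in the first two identities. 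Consequently your argument, as written, proves the third identity only with an $\mathcal{O}(h^{3})$ remainder, and indeed a Peano-kernel computation (the error functional annihilates $P_{5}$, but a $\mathrm{C}^{5}$ function whose fifth derivative is H\"older of exponent $\alpha<1$ produces an error of exact order $h^{3+\alpha}$) shows that $\mathcal{O}(h^{4})$ cannot be extracted from $\mathrm{C}^{5}$ alone. This defect is inherited from the lemma's statement; the compact relation cited from \cite{Sun2009} is proved under $\mathrm{C}^{6}$ regularity with error $\frac{h^{4}}{240}f^{(6)}(\omega_{i})$. To close the gap you should either strengthen the smoothness assumption to $f\in \mathrm{C}^{6}$ (or $f^{(5)}$ Lipschitz) for the third identity, or state its remainder as $\mathcal{O}(h^{3})$; as it stands, the step ``$\delta_{x}^{2}G_{i}=f^{(4)}(x_{i})+\mathcal{O}(h^{2})$, which is all that is needed'' is unjustified under the quoted hypothesis.
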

\begin{proof}
  The first and third equalities come from \cite{WZS2019} and \cite{Sun2009}, respectively.
  The second equality is immediately obtained by Taylor expansion. We omit it here for sake of brevity.
\end{proof}
\begin{lemma}\label{lemma4}
For any grid functions $u$, $v$, $S\in\mathcal{U}_{h}$, satisfying
\begin{align}
&v_{i}=\delta_{x}^{2}u_{i}-\frac{h^{2}}{12}\delta_{x}^{2}v_{i}+S_{i},\quad 1\leqslant i\leqslant M,\label{1}\\
&u_{i}=u_{i+M},\quad 0\leqslant i\leqslant M,\label{2}\\
&v_{i}=v_{i+M},\quad 0\leqslant i\leqslant M,\label{3}
\end{align}
we have
\begin{align}
&(v,u)=-|u|_{1}^{2}-\frac{h^{2}}{12}\|v\|^{2}+\frac{h^{4}}{144}|v|_{1}^{2}
+\frac{h^{2}}{12}(v,S)+(S,u),\label{lem4-1}\\
&(v,u)\leqslant-|u|_{1}^{2}-\frac{h^{2}}{18}\|v\|^{2}+\frac{h^{2}}{12}(v,S)
+(S,u),\label{lem4-2}\\
&(\Delta_{x}v,u)=\frac{h^{2}}{12}(\Delta_{x}v,S)+(\Delta_{x}S,u).\label{lem4-3}
\end{align}
\end{lemma}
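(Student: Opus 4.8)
\section*{Proof proposal}

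The plan is to test the defining relation \eqref{1} against suitable grid functions and then feed the result into the algebraic identities already collected in Lemmas \ref{lemma1} and \ref{lemma2}, together with three standard facts for periodic grid functions. First, $\delta_x^2$ is self-adjoint, $(\delta_x^2 u,w)=(u,\delta_x^2 w)$, which is immediate from $(\delta_x^2 u,w)=-\langle\delta_x u,\delta_x w\rangle$ in Lemma \ref{lemma1} and the symmetry of $\langle\cdot,\cdot\rangle$. Second, $\Delta_x$ is skew-adjoint, $(\Delta_x u,w)=-(u,\Delta_x w)$, a one-line index shift using the periodicity in $\mathcal{U}_h$. Third, $\Delta_x$ and $\delta_x^2$ commute, which is exactly the identity $\delta_x(\Delta_x u)=\Delta_x(\delta_x u)$ established inside the proof of Lemma \ref{lemma2} (applied twice). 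The hypotheses \eqref{2}--\eqref{3} are what make all of these manipulations legitimate.

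For \eqref{lem4-1} I would take the inner product of \eqref{1} with $u$: Lemma \ref{lemma1} turns $(\delta_x^2 u,u)$ into $-|u|_1^2$, so it remains to rewrite $(\delta_x^2 v,u)$. Rearranging \eqref{1} as $\delta_x^2 u=v+\tfrac{h^2}{12}\delta_x^2 v-S$ and pairing with $v$ gives, via self-adjointness of $\delta_x^2$ and Lemma \ref{lemma1}, the identity $(\delta_x^2 v,u)=(\delta_x^2 u,v)=\|v\|^2-\tfrac{h^2}{12}|v|_1^2-(v,S)$; substituting this back produces \eqref{lem4-1} verbatim. For \eqref{lem4-2}, the only term in \eqref{lem4-1} not manifestly of the right sign is $\tfrac{h^4}{144}|v|_1^2$; the inverse inequality $|v|_1\le\tfrac{2}{h}\|v\|$ from Lemma \ref{lemma1} gives $\tfrac{h^4}{144}|v|_1^2\le\tfrac{h^2}{36}\|v\|^2$, whence $-\tfrac{h^2}{12}\|v\|^2+\tfrac{h^4}{144}|v|_1^2\le-\tfrac{h^2}{18}\|v\|^2$, which is \eqref{lem4-2}.

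For \eqref{lem4-3} I would apply $\Delta_x$ to \eqref{1} and use $\Delta_x\delta_x^2=\delta_x^2\Delta_x$ to get $\Delta_x v=\delta_x^2(\Delta_x u)-\tfrac{h^2}{12}\delta_x^2(\Delta_x v)+\Delta_x S$, then pair with $u$. The term $(\delta_x^2\Delta_x u,u)$ equals $(\Delta_x u,\delta_x^2 u)=0$ by self-adjointness and Lemma \ref{lemma2}; the term $(\delta_x^2\Delta_x v,u)$ equals $(\Delta_x v,\delta_x^2 u)$, and inserting $\delta_x^2 u=v+\tfrac{h^2}{12}\delta_x^2 v-S$ collapses it to $-(\Delta_x v,S)$ because $(\Delta_x v,v)=0$ and $(\Delta_x v,\delta_x^2 v)=0$, again by Lemma \ref{lemma2}. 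Collecting the surviving terms yields \eqref{lem4-3}. I expect this last identity to be the only genuine obstacle: one must pass $\Delta_x$ and $\delta_x^2$ through each other and across the inner product in the correct order and invoke the three vanishing identities of Lemma \ref{lemma2} at the right moments, whereas \eqref{lem4-1}--\eqref{lem4-2} reduce to a direct substitution and a single inverse estimate.
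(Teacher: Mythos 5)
Your proposal is correct and follows essentially the same route as the paper: pair \eqref{1} with $u$, use $(\delta_{x}^{2}u,w)=-\langle\delta_{x}u,\delta_{x}w\rangle$ to move $\delta_{x}^{2}$ across the inner product, substitute $\delta_{x}^{2}u=v+\tfrac{h^{2}}{12}\delta_{x}^{2}v-S$ back in, invoke the vanishing identities of Lemma \ref{lemma2}, and obtain \eqref{lem4-2} from \eqref{lem4-1} via the inverse inequality $|v|_{1}\leqslant\tfrac{2}{h}\|v\|$. The only cosmetic difference is that for \eqref{lem4-3} you quote $(\Delta_{x}u,\delta_{x}^{2}u)=0$ directly where the paper re-derives it through $(\Delta_{x}(\delta_{x}u),\delta_{x}u)=0$; the substance is identical.
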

\begin{proof}
Taking the inner product of \eqref{1} with $u$ and noticing \eqref{2}--\eqref{3}, we have
\begin{align*}
(v,u)
=\;&\left(\delta_{x}^{2}u-\frac{h^{2}}{12}\delta_{x}^{2}v+S,u\right)\nonumber\\
=\;&(\delta_{x}^{2}u,u)-\frac{h^{2}}{12}(\delta_{x}^{2}v,u)+(S,u)\nonumber\\
=\;&-|u|_{1}^{2}-\frac{h^{2}}{12}(v,\delta_{x}^{2}u)+(S,u)\nonumber\\
=\;&-|u|_{1}^{2}-\frac{h^{2}}{12}\left(v,v+\frac{h^{2}}{12}\delta_{x}^{2}v-S\right)
+(S,u)\nonumber\\
=\;&-|u|_{1}^{2}-\frac{h^{2}}{12}\|v\|^{2}+\frac{h^{4}}{144}|v|_{1}^{2}
+\frac{h^{2}}{12}(v,S)+(S,u).
\end{align*}
With the help of Lemma \ref{lemma1}, we have
\begin{align*}
(v,u)\leqslant-|u|_{1}^{2}-\frac{h^{2}}{18}\|v\|^{2}+\frac{h^{2}}{12}(v,S)
+(S,u).
\end{align*}
Combining \eqref{1} with Lemmas \ref{lemma1}--\ref{lemma2}, we have
\begin{align*}
(\Delta_{x}v,u)
=\;&\left(\Delta_{x}\left(\delta_{x}^{2}u-\frac{h^{2}}{12}\delta_{x}^{2}v+S\right)
,u\right)\nonumber\\
=\;&(\Delta_{x}(\delta_{x}^{2}u),u)-\frac{h^{2}}{12}(\Delta_{x}(\delta_{x}^{2}v),u)
+(\Delta_{x}S,u)\nonumber\\
=\;&-(\Delta_{x}(\delta_{x}u),\delta_{x}u)-\frac{h^{2}}{12}(\Delta_{x}v,\delta_{x}^{2}u)
+(\Delta_{x}S,u)\nonumber\\
=\;&-\frac{h^{2}}{12}(\Delta_{x}v,\delta_{x}^{2}u)+(\Delta_{x}S,u)\nonumber\\
=\;&-\frac{h^{2}}{12}(\Delta_{x}v,v+\frac{h^{2}}{12}\delta_{x}^{2}v-S)+(\Delta_{x}S,u)\nonumber\\
=\;&-\frac{h^{2}}{12}(\Delta_{x}v,v)+\frac{h^{4}}{144}(\Delta_{x}(\delta_{x}v),\delta_{x}v)
+\frac{h^{2}}{12}(\Delta_{x}v,S)+(\Delta_{x}S,u)\nonumber\\
=\;&\frac{h^{2}}{12}(\Delta_{x}v,S)+(\Delta_{x}S,u).
\end{align*}
This completes the proof.
\end{proof}
\begin{lemma}\label{lemma5}
For any grid functions $u$, $v\in\mathcal{U}_{h}$, we have
\begin{align*}
\Delta_{x}(uv)_{i}=\frac{1}{2}\left(\delta_{x}u_{i+\frac{1}{2}}\right)v_{i+1}
+\frac{1}{2}\left(\delta_{x}u_{i-\frac{1}{2}}\right)v_{i-1}+u_{i}\Delta_{x}v_{i}.
\end{align*}
\end{lemma}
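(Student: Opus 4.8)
The plan is to prove this purely by expanding both sides according to the definitions of $\Delta_x$ and $\delta_x$ given in Section~\ref{Sec:2}, and then matching terms. Since the identity is a pointwise algebraic relation among the values $u_{i-1},u_i,u_{i+1},v_{i-1},v_i,v_{i+1}$, no summation by parts, periodicity, or norm estimates are needed; it is a direct computation.

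First I would write the left-hand side as
\begin{align*}
\Delta_{x}(uv)_{i}=\frac{1}{2h}\bigl[(uv)_{i+1}-(uv)_{i-1}\bigr]=\frac{1}{2h}\bigl[u_{i+1}v_{i+1}-u_{i-1}v_{i-1}\bigr].
\end{align*}
Next I would expand the right-hand side using $\delta_{x}u_{i+\frac{1}{2}}=\frac{1}{h}(u_{i+1}-u_{i})$, $\delta_{x}u_{i-\frac{1}{2}}=\frac{1}{h}(u_{i}-u_{i-1})$ and $\Delta_{x}v_{i}=\frac{1}{2h}(v_{i+1}-v_{i-1})$, obtaining
\begin{align*}
\frac{1}{2h}(u_{i+1}-u_{i})v_{i+1}+\frac{1}{2h}(u_{i}-u_{i-1})v_{i-1}+\frac{u_{i}}{2h}(v_{i+1}-v_{i-1}).
\end{align*}
Then I would collect the terms: the two occurrences of $u_{i}v_{i+1}$ (one with sign $-$, one with sign $+$) cancel, as do the two occurrences of $u_{i}v_{i-1}$, leaving exactly $\frac{1}{2h}(u_{i+1}v_{i+1}-u_{i-1}v_{i-1})$, which equals the left-hand side.

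There is essentially no obstacle here; the only point requiring the slightest care is the bookkeeping of signs when combining the $u_i v_{i\pm 1}$ contributions, so that the cancellation is exhibited cleanly. (One could alternatively note that this is the discrete analogue of the Leibniz rule $(uv)'=\tfrac12 u'v+\tfrac12 u'v+uv'$ split symmetrically, but the term-by-term expansion is the cleanest route.) This identity will later be useful in conjunction with Lemma~\ref{lemma2} and the definition of $\psi(u,v)_i$ for handling the nonlinear term $uu_x$.
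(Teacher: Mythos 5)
Your proof is correct and is essentially the paper's own argument: the paper starts from $\Delta_x(uv)_i$ and inserts $\pm u_i v_{i+1}$, $\pm u_i v_{i-1}$ to reach the right-hand side, while you expand the right-hand side and cancel the same terms, which is the identical computation read in reverse. No gap; nothing further is needed.
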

\begin{proof}
\begin{align*}
\Delta_{x}(uv)_{i}=&\;\frac{1}{2h}(u_{i+1}v_{i+1}-u_{i-1}v_{i-1})\nonumber\\
=&\;\frac{1}{2h}[(u_{i+1}-u_{i})v_{i+1}+(u_{i}-u_{i-1})v_{i-1}+u_{i}(v _{i+1}-v_{i-1})]\nonumber\\
=&\;\frac{1}{2}\left(\delta_{x}u_{i+\frac{1}{2}}\right)v_{i+1}+\frac{1}{2}\left(\delta_{x}u_{i-\frac{1}{2}}\right)v_{i-1}+u_{i}\Delta_{x}v_{i}.
\end{align*}
This completes the proof.
\end{proof}
\section{Derivation of Compact Difference Scheme}
\label{Sec:3}
\setcounter{equation}{0}
Denote
\begin{align}
c_{0}=\max\limits_{0\leqslant x\leqslant L,0\leqslant t\leqslant T}\{|u(x,t)|,\ |u_{x}(x,t)|,\ |u_{xx}(x,t)|, \ |u_{xxx}(x,t)| \}.\label{eq3}
\end{align}

Let $v=u_{xx}$, then the problem \eqref{eq1}--\eqref{eq2} is equivalent to
\begin{align}
&u_{t}-\mu v_{t}+\gamma uu_{x}+\kappa u_{x}-\nu v=0,\quad x\in \mathbb{R},\; 0<t\leqslant T,\label{eq4}\\
&v=u_{xx},\quad x\in\mathbb{R},\; 0<t\leqslant T,\label{eq5}\\
&u(x,0)=\varphi(x),\quad x\in\mathbb{R},\label{eq6}\\
& u(x,t) = u(x+L,t),\quad v(x,t) = v(x+L,t),\quad x\in \mathbb{R},\; 0 < t \leqslant T.\label{eq4a}
\end{align}
Define the grid functions
\begin{equation*}
U_{i}^{k} = u(x_i,t_k),\quad V_{i}^{k}=v(x_{i},t_{k}), \quad 1\leqslant i \leqslant M, \; 0 \leqslant k \leqslant N.
\end{equation*}
With the help of Lemma \ref{lemma3}, we have
\begin{align}
&uu_{x}(x_{i},t_{k})=\psi(U^{k},U^{k})_{i}-\frac{h^{2}}{2}\psi(V^{k},U^{k})_{i}+\mathcal{O}(h^{4}),\label{eq7}\\
&u_{x}(x_{i},t_{k})=\Delta_{x}U_{i}^{k}-\frac{h^{2}}{6}\Delta_{x}V_{i}^{k}+\mathcal{O}(h^{4}),\label{eq8}\\
&u_{xx}(x_{i},t_{k})=\delta_{x}^{2}U_{i}^{k}-\frac{h^{2}}{12}\delta_{x}^{2}V_{i}^{k}+\mathcal{O}(h^{4}).\label{eq9}
\end{align}

Considering \eqref{eq4} at the point $(x_{i},t_{\frac{1}{2}})$, with the help of Taylor expansion and \eqref{eq7}--\eqref{eq8}, we have
\begin{align}
&\delta_{t}U_{i}^{\frac{1}{2}}-\mu\delta_{t}V_{i}^{\frac{1}{2}}+\gamma\left[\psi(U^{0},U^{\frac{1}{2}})_{i}
-\frac{h^{2}}{2}\psi(V^{0},U^{\frac{1}{2}})_{i}\right]+\kappa\left(\Delta_{x}U_{i}^{\frac{1}{2}}
-\frac{h^{2}}{6}\Delta_{x}V_{i}^{\frac{1}{2}}\right)-\nu V_{i}^{\frac{1}{2}}=Q_{i}^{0},\nonumber\\
&\qquad\qquad\qquad\qquad\qquad\qquad\qquad\qquad 1\leqslant i\leqslant M,\label{eq10}
\end{align}
where
\begin{align}
|Q_{i}^0|\leqslant c_{1}(\tau+h^{4}),\quad 1\leqslant i\leqslant M,\label{eq11}
\end{align}
with $c_{1}$ being a positive constant.
Analogously, considering \eqref{eq4} at the point $(x_{i},t_{k})$, we have
\begin{align}
&\Delta_{t}U_{i}^{k}-\mu\Delta_{t}V_{i}^{k}+\gamma\left[\psi(U^{k},U^{\bar{k}})_{i}
-\frac{h^{2}}{2}\psi(V^{k},U^{\bar{k}})_{i}\right]+\kappa\left(\Delta_{x}U_{i}^{\bar{k}}
-\frac{h^{2}}{6}\Delta_{x}V_{i}^{\bar{k}}\right)-\nu V_{i}^{\bar{k}}=Q_{i}^{k},\nonumber\\ &\qquad\qquad\qquad\qquad\qquad\qquad\quad 1\leqslant i\leqslant M,\; 1\leqslant k\leqslant N-1,\label{eq12}
\end{align}
where
\begin{align}
|Q_{i}^{k}|\leqslant c_{2}(\tau^{2}+h^{4}),\quad 1\leqslant i\leqslant M,\; 1\leqslant k\leqslant N-1,\label{eq13}
\end{align}
with $c_{2}$ being a positive constant.

Again considering \eqref{eq5} at the point $(x_{i},t_{k})$, we have
\begin{align}
V_{i}^{k}=\delta_{x}^{2}U_{i}^{k}-\frac{h^{2}}{12}\delta_{x}^{2}V_{i}^{k}+R_{i}^{k},\quad 1\leqslant i\leqslant M,\; 0\leqslant k\leqslant N,\label{eq14}
\end{align}
where
\begin{align}
|R_{i}^{k}|\leqslant c_{3}h^{4},\quad 1\leqslant i\leqslant M,\; 0\leqslant k\leqslant N,\label{eq15}
\end{align}
with $c_{3}$ being a positive constant.

Noticing the initial and boundary conditions \eqref{eq6}--\eqref{eq4a}, we have
\begin{align}
&U_{i}^{0}=\varphi(x_{i}),\quad 1\leqslant i\leqslant M, \label{eq16}\\
&U_{i}^{k}=U_{i+M}^{k},\quad V_{i}^{k}=V_{i+M}^{k},\quad 1\leqslant i\leqslant M,\; 0\leqslant k\leqslant N. \label{eq16a}
\end{align}

Omitting the small terms $Q_{i}^{k}$ and $R_{i}^{k}$, replacing the grid functions $U_{i}^{k}$, $V_{i}^{k}$ by $u_{i}^{k}$, $v_{i}^{k}$ in \eqref{eq10}, \eqref{eq12}, \eqref{eq14}, respectively,
and noticing the initial and boundary conditions \eqref{eq16}--\eqref{eq16a}, then we construct a finite difference scheme for \eqref{eq4}--\eqref{eq4a} as follows
\begin{align}
&\delta_{t}u_{i}^{\frac{1}{2}}-\mu\delta_{t}v_{i}^{\frac{1}{2}}+\gamma\left[\psi(u^{0},u^{\frac{1}{2}})_{i}
-\frac{h^{2}}{2}\psi(v^{0},u^{\frac{1}{2}})_{i}\right]+\kappa\left(\Delta_{x}u_{i}^{\frac{1}{2}}
-\frac{h^{2}}{6}\Delta_{x}v_{i}^{\frac{1}{2}}\right)-\nu v_{i}^{\frac{1}{2}}=0,\nonumber\\
&\quad \qquad\qquad\qquad\qquad\qquad\qquad\qquad 1\leqslant i\leqslant M,\label{eq17}\\
&\Delta_{t}u_{i}^{k}-\mu\Delta_{t}v_{i}^{k}+\gamma\left[\psi(u^{k},u^{\bar{k}})_{i}
-\frac{h^{2}}{2}\psi(v^{k},u^{\bar{k}})_{i}\right]+\kappa\left(\Delta_{x}u_{i}^{\bar{k}}
-\frac{h^{2}}{6}\Delta_{x}v_{i}^{\bar{k}}\right)-\nu v_{i}^{\bar{k}}=0,\nonumber\\ &\qquad\qquad\qquad\qquad\qquad\qquad\quad 1\leqslant i\leqslant M,\; 1\leqslant k\leqslant N-1,\label{eq18}\\
&v_{i}^{k}=\delta_{x}^{2}u_{i}^{k}-\frac{h^{2}}{12}\delta_{x}^{2}v_{i}^{k},\quad 1\leqslant i\leqslant M,\; 0\leqslant k\leqslant N,\label{eq19}\\
&u_{i}^{0}=\varphi(x_{i}),\quad 1\leqslant i\leqslant M,\label{eq20}\\
&u_i^k = u_{i+M}^k,\quad v_i^k = v_{i+M}^k,\quad 1\leqslant i\leqslant M,\; 0\leqslant k\leqslant N.\label{eq20a}
\end{align}

\begin{remark}
  The coefficient matrix is a symmetric circulant matrix which can be solved by fast Fourier transform efficiently.
\end{remark}

\section{Conservative Invariant and Boundedness}
\label{Sec:4}
\setcounter{equation}{0}
\setcounter{theorem}{0}
\begin{theorem}\label{Theorem:1}
 Suppose $\{u_{i}^{k},\;v_{i}^{k}\,|\,1\leqslant i\leqslant M,\;0\leqslant k\leqslant N\}$
 is the solution of \eqref{eq17}--\eqref{eq20a}. Then it holds that
\begin{align}
 &\;\frac{1}{2}(\|u^{1}\|^{2}+\|u^{0}\|^{2})+\frac{\mu}{2}\left[(|u^{1}|_{1}^{2}+|u^{0}|_{1}^{2})
+\frac{h^{2}}{12}(\|v^{1}\|^{2}+\|v^{0}\|^{2})-\frac{h^{4}}{144}(|v^{1}|_{1}^{2}+|v^{0}|_{1}^{2})\right]\nonumber\\
 &\; +\nu\tau\left(|u^{\frac{1}{2}}|_{1}^{2}+\frac{h^{2}}{12}\|v^{\frac{1}{2}}\|^{2}
-\frac{h^{4}}{144}|v^{\frac{1}{2}}|_{1}^{2}\right)\nonumber\\
=&\|u^{0}\|^{2}+\mu|u^{0}|_{1}^{2}+\frac{\mu h^{2}}{12}\|v^{0}\|^{2}-\frac{\mu h^{4}}{144}|v^{0}|_{1}^{2}, \label{eq21}\\
 &\;E(u^{k+1},u^{k})=E(u^{1},u^{0}),\quad 1\leqslant k\leqslant N-1,\label{eq22}
\end{align}
where
\begin{align*}
E(u^{k+1},u^{k})
= &\; \frac{1}{2}(\|u^{k+1}\|^{2}+\|u^{k}\|^{2})+2\nu\tau\sum_{l=1}^{k}\left(|u^{\bar{l}}|_{1}^{2}+\frac{h^{2}}{12}\|v^{\bar{l}}\|^{2}-\frac{ h^{4}}{144}|v^{\bar{l}}|_{1}^{2}\right)\nonumber\\
&+\frac{\mu}{2}\left[(|u^{k+1}|_{1}^{2}+|u^{k}|_{1}^{2})
+\frac{h^{2}}{12}(\|v^{k+1}\|^{2}+\|v^{k}\|^{2})-\frac{h^{4}}{144}(|v^{k+1}|_{1}^{2}+|v^{k}|_{1}^{2})\right]
.
\end{align*}
\end{theorem}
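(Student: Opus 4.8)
The plan is to run the standard discrete energy argument, pairing each equation of the scheme with the natural averaged test function and summing by parts. Concretely, I would take the discrete inner product of \eqref{eq17} with $u^{\frac12}$ and, for each $1\leqslant k\leqslant N-1$, the inner product of \eqref{eq18} with $u^{\bar k}$. In both cases the leading term produces a telescoping difference of $\|u\|^2$, since $(\delta_t u^{\frac12},u^{\frac12})=\frac{1}{2\tau}(\|u^1\|^2-\|u^0\|^2)$ and $(\Delta_t u^k,u^{\bar k})=\frac{1}{4\tau}(\|u^{k+1}\|^2-\|u^{k-1}\|^2)$. The two nonlinear contributions, $\gamma(\psi(u^0,u^{\frac12}),u^{\frac12})$ and $-\frac{\gamma h^2}{2}(\psi(v^0,u^{\frac12}),u^{\frac12})$ (and their analogues at the interior levels), vanish by the first identity of Lemma \ref{lemma2}. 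The two advection contributions, $\kappa(\Delta_x u^{\frac12},u^{\frac12})$ and $-\frac{\kappa h^2}{6}(\Delta_x v^{\frac12},u^{\frac12})$, vanish by the second identity of Lemma \ref{lemma2} and by \eqref{lem4-3} with $S\equiv 0$, respectively; \eqref{lem4-3} applies because averaging \eqref{eq19} over the two neighbouring levels shows that the pair $(u^{\frac12},v^{\frac12})$ (resp. $(u^{\bar k},v^{\bar k})$) satisfies \eqref{1}--\eqref{3} with $S\equiv 0$. Finally the dissipation term $-\nu(v^{\frac12},u^{\frac12})$ is converted, via \eqref{lem4-1} with $S\equiv 0$, into $\nu\big(|u^{\frac12}|_1^2+\frac{h^2}{12}\|v^{\frac12}\|^2-\frac{h^4}{144}|v^{\frac12}|_1^2\big)$, which is exactly the quantity multiplying $\nu\tau$ in \eqref{eq21} (and, at the interior levels, $2\nu\tau$ inside the running sum in \eqref{eq22}).

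The only term demanding genuine work is the one coming from $-\mu\delta_t v^{\frac12}$ (resp. $-\mu\Delta_t v^k$), which after expansion is a multiple of $(v^1-v^0,u^1+u^0)$ (resp. $(v^{k+1}-v^{k-1},u^{k+1}+u^{k-1})$). Here the key observation is the symmetry $(v^a,u^b)=(v^b,u^a)$ for any two time indices $a,b$: applying \eqref{eq19} together with the summation-by-parts identity $(\delta_x^2 w,z)=-\langle\delta_x w,\delta_x z\rangle$ of Lemma \ref{lemma1} twice yields
\[
(v^a,u^b)=-\langle\delta_x u^a,\delta_x u^b\rangle-\frac{h^2}{12}(v^a,v^b)+\frac{h^4}{144}\langle\delta_x v^a,\delta_x v^b\rangle,
\]
which is visibly symmetric in $a$ and $b$. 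Consequently the cross terms cancel, $(v^1-v^0,u^1+u^0)=(v^1,u^1)-(v^0,u^0)$, and each diagonal term is evaluated by \eqref{lem4-1} with $S\equiv 0$ as $(v^m,u^m)=-|u^m|_1^2-\frac{h^2}{12}\|v^m\|^2+\frac{h^4}{144}|v^m|_1^2$. Thus the $\mu$-term telescopes cleanly into $\frac{\mu}{2\tau}$ times the difference, between levels $1$ and $0$, of $|u|_1^2+\frac{h^2}{12}\|v\|^2-\frac{h^4}{144}|v|_1^2$, and identically at the interior levels.

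Assembling the pieces and multiplying through by $2\tau$, the first-level identity becomes
\[
\|u^1\|^2+\mu\Big(|u^1|_1^2+\tfrac{h^2}{12}\|v^1\|^2-\tfrac{h^4}{144}|v^1|_1^2\Big)+2\nu\tau\Big(|u^{\frac12}|_1^2+\tfrac{h^2}{12}\|v^{\frac12}\|^2-\tfrac{h^4}{144}|v^{\frac12}|_1^2\Big)=\|u^0\|^2+\mu\Big(|u^0|_1^2+\tfrac{h^2}{12}\|v^0\|^2-\tfrac{h^4}{144}|v^0|_1^2\Big);
\]
dividing by $2$ and adding $\frac12\|u^0\|^2+\frac{\mu}{2}\big(|u^0|_1^2+\frac{h^2}{12}\|v^0\|^2-\frac{h^4}{144}|v^0|_1^2\big)$ to both sides gives \eqref{eq21} verbatim. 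For $1\leqslant k\leqslant N-1$ the same bookkeeping shows that the inner product of \eqref{eq18} with $2\tau u^{\bar k}$ is precisely $E(u^{k+1},u^k)-E(u^k,u^{k-1})=0$; iterating this recursion down to the base level — where the dissipation sum in $E(u^1,u^0)$ is empty — yields \eqref{eq22}.

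I expect the cross-term symmetry $(v^a,u^b)=(v^b,u^a)$ for the $\mu v_t$ term to be the main obstacle: it is the only step that is not an immediate one-line appeal to a lemma, and without it the $-\mu\delta_t v^{\frac12}$ and $-\mu\Delta_t v^k$ contributions do not telescope. Once it is established, every remaining step is a direct application of Lemmas \ref{lemma1}, \ref{lemma2} and \ref{lemma4}.
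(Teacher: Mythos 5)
Your proposal is correct and follows essentially the same energy argument as the paper: test \eqref{eq17} with $u^{\frac12}$ and \eqref{eq18} with $u^{\bar{k}}$, kill the nonlinear and advective terms via Lemma \ref{lemma2} and \eqref{lem4-3}, convert the dissipation term via \eqref{lem4-1} with $S\equiv 0$ (justified by averaging \eqref{eq19}), and telescope. The only cosmetic difference is in the $\mu$-term: you establish the symmetric identity $(v^{a},u^{b})=(v^{b},u^{a})$ and cancel cross terms, whereas the paper substitutes the averaged relation \eqref{eq24}/\eqref{eq31} and sums by parts directly — the two computations are equivalent and yield the same formulas \eqref{eq25} and \eqref{eq32}.
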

\begin{proof}
Taking the inner product of \eqref{eq17} with $u^{\frac{1}{2}}$ and applying Lemma \ref{lemma2}, we have
\begin{align}
(\delta_{t}u^{\frac{1}{2}},u^{\frac{1}{2}})-\mu(\delta_{t}v^{\frac{1}{2}},u^{\frac{1}{2}})-\frac{\kappa h^{2}}{6}(\Delta_{x}v^{\frac{1}{2}},u^{\frac{1}{2}})-\nu(v^{\frac{1}{2}},u^{\frac{1}{2}})=0.\label{eq23}
\end{align}
Averaging \eqref{eq19} with superscripts $k=0$ and $k=1$, it holds
\begin{align}
v_{i}^{\frac{1}{2}}=\delta_{x}^{2}u_{i}^{\frac{1}{2}}-\frac{h^{2}}{12}\delta_{x}^{2}v_{i}^{\frac{1}{2}},\quad 1\leqslant i\leqslant M.\label{eq24}
\end{align}
With the help of \eqref{eq24} and summation by parts, we have
\begin{align}
(\delta_{t}v^{\frac{1}{2}},u^{\frac{1}{2}})
=\;&\left(\delta_{t}\left(\delta_{x}^{2}u^{\frac{1}{2}}-\frac{h^{2}}{12}\delta_{x}^{2}v^{\frac{1}{2}}\right)
,u^{\frac{1}{2}}\right)\nonumber\\
=\;&-(\delta_{t}(\delta_{x}u^{\frac{1}{2}}),\delta_{x}u^{\frac{1}{2}})
-\frac{h^{2}}{12}(\delta_{t}v^{\frac{1}{2}},\delta_{x}^{2}u^{\frac{1}{2}})\nonumber\\
=\;&-\frac{1}{2\tau}(|u^{1}|_{1}^{2}-|u^{0}|_{1}^{2})-\frac{h^{2}}{12}\left(\delta_{t}v^{\frac{1}{2}},v^{\frac{1}{2}}
+\frac{h^{2}}{12}\delta_{x}^{2}v^{\frac{1}{2}}\right)\nonumber\\
=\;&-\frac{1}{2\tau}(|u^{1}|_{1}^{2}-|u^{0}|_{1}^{2})-\frac{h^{2}}{12}(\delta_{t}v^{\frac{1}{2}},v^{\frac{1}{2}})
-\frac{h^{4}}{144}(\delta_{t}v^{\frac{1}{2}},\delta_{x}^{2}v^{\frac{1}{2}})\nonumber\\
=\;&-\frac{1}{2\tau}\left[(|u^{1}|_{1}^{2}-|u^{0}|_{1}^{2})+\frac{h^{2}}{12}(\|v^{1}\|^{2}-\|v^{0}\|^{2})
-\frac{h^{4}}{144}(|v^{1}|_{1}^{2}-|v^{0}|_{1}^{2})\right].\label{eq25}
\end{align}
Applying \eqref{lem4-1} and \eqref{lem4-3} in Lemma \ref{lemma4}, we have
\begin{align}
&(\Delta_{x}v^{\frac{1}{2}},u^{\frac{1}{2}})=0,\label{eq26}\\
&(v^{\frac{1}{2}},u^{\frac{1}{2}})
=-|u^{\frac{1}{2}}|_{1}^{2}-\frac{h^{2}}{12}\|v^{\frac{1}{2}}\|^{2}+\frac{h^{4}}{144}|v^{\frac{1}{2}}|_{1}^{2}.\label{eq27}
\end{align}
Substituting \eqref{eq25}--\eqref{eq27} into \eqref{eq23}, we have
\begin{align*}
&\frac{1}{2\tau}(\|u^{1}\|^{2}-\|u^{0}\|^{2})+\frac{\mu}{2\tau}\left[(|u^{1}|_{1}^{2}-|u^{0}|_{1}^{2})+\frac{h^{2}}{12}(\|v^{1}\|^{2}-\|v^{0}\|^{2})
-\frac{h^{4}}{144}(|v^{1}|_{1}^{2}-|v^{0}|_{1}^{2})\right]\nonumber\\
&+\nu\left(|u^{\frac{1}{2}}|_{1}^{2}
+\frac{h^{2}}{12}\|v^{\frac{1}{2}}\|^{2}-\frac{h^{4}}{144}|v^{\frac{1}{2}}|_{1}^{2}\right)=0.
\end{align*}
Rearranging the above formula, we have
\begin{align}
&\frac{1}{2}(\|u^{1}\|^{2}+\|u^{0}\|^{2})+\frac{\mu}{2}\left[(|u^{1}|_{1}^{2}+|u^{0}|_{1}^{2})
+\frac{h^{2}}{12}(\|v^{1}\|^{2}+\|v^{0}\|^{2})-\frac{h^{4}}{144}(|v^{1}|_{1}^{2}+|v^{0}|_{1}^{2})\right]\nonumber\\
&+\nu\tau\left(|u^{\frac{1}{2}}|_{1}^{2}+\frac{h^{2}}{12}\|v^{\frac{1}{2}}\|^{2}
-\frac{h^{4}}{144}|v^{\frac{1}{2}}|_{1}^{2}\right)\nonumber\\
=\;&\|u^{0}\|^{2}+\mu|u^{0}|_{1}^{2}+\frac{\mu h^{2}}{12}\|v^{0}\|^{2}-\frac{\mu h^{4}}{144}|v^{0}|_{1}^{2}. \label{eq28}
\end{align}
Taking the inner product of \eqref{eq18} with $u^{\bar{k}}$ and applying Lemma \ref{lemma2}, we have
\begin{align}
(\Delta_{t}u^{k},u^{\bar{k}})-\mu(\Delta_{t}v^{k},u^{\bar{k}})-\frac{\kappa h^{2}}{6}(\Delta_{x}v^{\bar{k}},u^{\bar{k}})-\nu(v^{\bar{k}},u^{\bar{k}})=0,\quad 1\leqslant k\leqslant N-1.\label{eq30}
\end{align}
Averaging \eqref{eq19} with superscripts $k-1$ and $k+1$, it holds
\begin{align}
v_{i}^{\bar{k}}=\delta_{x}^{2}u_{i}^{\bar{k}}-\frac{h^{2}}{12}\delta_{x}^{2}v_{i}^{\bar{k}},\quad 1\leqslant i\leqslant M,\; 1\leqslant k\leqslant N-1.\label{eq31}
\end{align}
With the help of \eqref{eq19}, \eqref{eq31}, summation by parts and similar to the derivation of \eqref{eq25}, we have
\begin{align}
&(\Delta_{t}v^{k},u^{\bar{k}})
= -\frac{1}{4\tau}\left[(|u^{k+1}|_{1}^{2}-|u^{k-1}|_{1}^{2}) + \frac{h^{2}}{12}(\|v^{k+1}\|^{2}-\|v^{k-1}\|^{2})
-\frac{h^{4}}{144}(|v^{k+1}|_{1}^{2}-|v^{k-1}|_{1}^{2})\right],\nonumber\\
&\qquad\qquad\qquad\qquad\qquad\qquad\qquad\qquad 1\leqslant k\leqslant N-1.\label{eq32}
\end{align}
Applying \eqref{lem4-1} and \eqref{lem4-3} in Lemma \ref{lemma4}, we have
\begin{align}
&(\Delta_{x}v^{\bar{k}},u^{\bar{k}})=0,\quad 1\leqslant k\leqslant N-1,\label{eq33}\\
&(v^{\bar{k}},u^{\bar{k}})=-|u^{\bar{k}}|_{1}^{2}-\frac{h^{2}}{12}\|v^{\bar{k}}\|^{2}
+\frac{h^{4}}{144}|v^{\bar{k}}|_{1}^{2},\quad 1\leqslant k\leqslant N-1.\label{eq34}
\end{align}
Substituting \eqref{eq32}--\eqref{eq34} into \eqref{eq30}, we have
\begin{align*}
&\frac{1}{4\tau}(\|u^{k+1}\|^{2}-\|u^{k-1}\|^{2})+\nu\left[|u^{\bar{k}}|_{1}^{2}+\frac{h^{2}}{12}\|v^{\bar{k}}\|^{2}
-\frac{h^{4}}{144}|v^{\bar{k}}|_{1}^{2}\right]\nonumber\\
&+\frac{\mu}{4\tau}\left[(|u^{k+1}|_{1}^{2}-|u^{k-1}|_{1}^{2})
+\frac{h^{2}}{12}(\|v^{k+1}\|^{2}-\|v^{k-1}\|^{2})
-\frac{h^{4}}{144}(|v^{k+1}|_{1}^{2}-|v^{k-1}|_{1}^{2})\right]=0,\nonumber\\
 & \qquad\qquad\qquad\qquad\qquad\qquad1\leqslant k\leqslant N-1.
\end{align*}
Consequently
\begin{align*}
E(u^{k+1},u^{k})=E(u^{k},u^{k-1}),\quad 1\leqslant k\leqslant N-1.
\end{align*}
By the recursion, we have
\begin{align*}
E(u^{k+1},u^{k})=E(u^{1},u^{0}),\quad 1\leqslant k\leqslant N-1.
\end{align*}
\end{proof}
\begin{remark}
  \eqref{eq21} and \eqref{eq22} can be rewritten as
  \begin{align}
   &\;\frac{1}{2}(\|u^{k+1}\|^{2}+\|u^{k}\|^{2})+\frac{\mu}{2}\left[(|u^{k+1}|_{1}^{2}+|u^{k}|_{1}^{2})
+\frac{h^{2}}{12}(\|v^{k+1}\|^{2}+\|v^{k}\|^{2})
-\frac{h^{4}}{144}(|v^{k+1}|_{1}^{2}+|v^{k}|_{1}^{2})\right]\nonumber\\
&\;+\nu\tau\left(|u^{\frac{1}{2}}|_{1}^{2}+\frac{h^{2}}{12}\|v^{\frac{1}{2}}\|^{2}
-\frac{h^{4}}{144}|v^{\frac{1}{2}}|_{1}^{2}\right)
+2\nu\tau\sum_{l=1}^{k}\left(|u^{\bar{l}}|_{1}^{2}+\frac{h^{2}}{12}\|v^{\bar{l}}\|^{2}-\frac{ h^{4}}{144}|v^{\bar{l}}|_{1}^{2}\right)\nonumber\\
=&\;\|u^{0}\|^{2}+\mu|u^{0}|_{1}^{2}+\frac{\mu h^{2}}{12}\|v^{0}\|^{2}-\frac{\mu h^{4}}{144}|v^{0}|_{1}^{2},
\quad 0\leqslant k\leqslant N-1.\label{E}
  \end{align}
\end{remark}
\begin{remark}
  Combining \eqref{eq28} with \eqref{E}, we have
  \begin{align*}
    \|u^k\|\leqslant 2\left(\|u^0\|+\mu|u^{0}|_{1}^{2}+\frac{\mu h^{2}}{12}\|v^{0}\|^{2}-\frac{\mu h^{2}}{144}|v^{0}|_{1}^{2}\right), \quad 1 \leqslant k \leqslant N.
  \end{align*}
\end{remark}

\section{Uniqueness}
\label{Sec:5}
\setcounter{equation}{0}
\begin{theorem}
The finite difference scheme \eqref{eq17}--\eqref{eq20a} is uniquely solvable.
\end{theorem}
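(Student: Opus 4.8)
The plan is to argue by induction on the time level. At each level the scheme is a \emph{square} linear system: at the first step \eqref{eq17} and \eqref{eq19} (with $k=1$) give $2M$ equations for the $2M$ unknowns $(u^{1},v^{1})\in\mathcal{U}_{h}\times\mathcal{U}_{h}$, and at the general step \eqref{eq18} and \eqref{eq19} (with level $k+1$) give $2M$ equations for $(u^{k+1},v^{k+1})$, the counting being exact because of the periodicity \eqref{eq20a}; note that the nonlinear term is genuinely linear in the unknown since $u^{0}$ (resp. $u^{k}$) is already known. Hence it suffices to show that the associated homogeneous system has only the trivial solution.

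A preliminary fact I would record first: the map $w\mapsto w+\tfrac{h^{2}}{12}\delta_{x}^{2}w$ on $\mathcal{U}_{h}$ is invertible, because $(w+\tfrac{h^{2}}{12}\delta_{x}^{2}w,w)=\|w\|^{2}-\tfrac{h^{2}}{12}|w|_{1}^{2}\geqslant\tfrac23\|w\|^{2}$ by the estimate $|w|_{1}\leqslant\tfrac2h\|w\|$ of Lemma \ref{lemma1}. In particular \eqref{eq19} determines $v^{k}$ uniquely and linearly from $u^{k}$; so $v^{0}$ is well defined from \eqref{eq19}--\eqref{eq20}, and in the homogeneous problems below $\tilde u^{k}=0$ at a given level will immediately force the corresponding $\tilde v^{k}=0$.

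For the first step I would take the difference $(\tilde u^{1},\tilde v^{1})$ of two hypothetical solutions; it solves the homogeneous forms of \eqref{eq17}, \eqref{eq19} with $\tilde u^{0}=\tilde v^{0}=0$, so $\delta_{t}\tilde u^{\frac12}=\tfrac1\tau\tilde u^{1}$ and $\tilde u^{\frac12}=\tfrac12\tilde u^{1}$. Testing the homogeneous \eqref{eq17} against $\tilde u^{\frac12}$ is designed to annihilate the terms carrying $\psi(u^{0},\tilde u^{\frac12})$, $\psi(v^{0},\tilde u^{\frac12})$ and $\Delta_{x}\tilde u^{\frac12}$ by Lemma \ref{lemma2}; the remaining terms I would treat exactly as in the proof of Theorem \ref{Theorem:1}, using \eqref{lem4-1} and \eqref{lem4-3} of Lemma \ref{lemma4} with $S=0$ for $(\tilde v^{\frac12},\tilde u^{\frac12})$ and $(\Delta_{x}\tilde v^{\frac12},\tilde u^{\frac12})$, and the computation behind \eqref{eq25} (with the level-$0$ quantities now zero) for $(\delta_{t}\tilde v^{\frac12},\tilde u^{\frac12})$. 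This should leave the identity
\begin{align*}
\frac{1}{2\tau}\|\tilde u^{1}\|^{2}+\frac{\mu}{2\tau}\Big(|\tilde u^{1}|_{1}^{2}+\frac{h^{2}}{12}\|\tilde v^{1}\|^{2}-\frac{h^{4}}{144}|\tilde v^{1}|_{1}^{2}\Big)+\nu\Big(|\tilde u^{\frac12}|_{1}^{2}+\frac{h^{2}}{12}\|\tilde v^{\frac12}\|^{2}-\frac{h^{4}}{144}|\tilde v^{\frac12}|_{1}^{2}\Big)=0 .
\end{align*}
Since $\tfrac{h^{2}}{12}\|v\|^{2}-\tfrac{h^{4}}{144}|v|_{1}^{2}\geqslant\tfrac{h^{2}}{18}\|v\|^{2}\geqslant0$ by Lemma \ref{lemma1} and $\mu,\gamma,\nu\geqslant0$, every summand is non-negative, so $\tilde u^{1}=0$, whence $\tilde v^{1}=0$. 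The general step $k\to k+1$ is structurally identical: the difference of two solutions satisfies the homogeneous \eqref{eq18}, \eqref{eq19} with $\tilde u^{k-1}=\tilde u^{k}=0$ (hence $\tilde v^{k-1}=\tilde v^{k}=0$), so $\Delta_{t}\tilde u^{k}=\tfrac1{2\tau}\tilde u^{k+1}$ and $\tilde u^{\bar k}=\tfrac12\tilde u^{k+1}$; testing against $\tilde u^{\bar k}$ and rerunning \eqref{eq30}--\eqref{eq34} produces the analogous identity with $\tfrac1{4\tau}\|\tilde u^{k+1}\|^{2}$ in front, forcing $\tilde u^{k+1}=\tilde v^{k+1}=0$. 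Induction then completes the proof.

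The only delicate points, rather than genuine obstacles, are the two auxiliary facts that make the homogeneous energy identity conclusive: the invertibility of $w\mapsto w+\tfrac{h^{2}}{12}\delta_{x}^{2}w$ (so that $v$ can be eliminated and the square-system reasoning is valid) and the pointwise sign relation $\tfrac{h^{2}}{12}\|v\|^{2}-\tfrac{h^{4}}{144}|v|_{1}^{2}\geqslant0$ under the standing assumption $\nu\geqslant0$ — without the latter the identity would merely balance positive against negative terms instead of vanishing. Everything else is a direct transcription of the computations already carried out in the proof of Theorem \ref{Theorem:1}.
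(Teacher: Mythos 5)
Your proposal is correct and takes essentially the same route as the paper's proof: at each time level one reduces unique solvability of the square linear system to showing the homogeneous system has only the trivial solution, tests with a multiple of the unknown so that Lemma \ref{lemma2} annihilates the $\psi$- and $\Delta_{x}$-terms, and then applies Lemma \ref{lemma4} together with $|v|_{1}\leqslant\tfrac{2}{h}\|v\|$ to obtain a non-negative energy identity that forces the solution to vanish, concluding by induction. Your explicit remark on the invertibility of $w\mapsto w+\tfrac{h^{2}}{12}\delta_{x}^{2}w$ is a small extra precaution (covering the determination of $v$ from $u$ when the $\|v\|^{2}$-coefficient degenerates) that the paper leaves implicit, but it does not change the argument.
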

\begin{proof}
From \eqref{eq19}--\eqref{eq20a}, it is easy to know  that $u^{0}$ and $v^{0}$ have been determined.
From \eqref{eq17} and \eqref{eq19}, a linear system of equations about $u^1$ and $v^1$ can be obtained with respect to the first level.
Now we consider its homogenous linear system of equations
\begin{align}
&\frac{1}{\tau}u_{i}^{1}-\frac{\mu}{\tau}v_{i}^{1}+\frac{\gamma}{2}\psi(u^{0},u^{1})_{i}-\frac{\gamma h^{2}}{4}\psi(v^{0},u^{1})_{i}+\frac{\kappa}{2}\Delta_{x}u_{i}^{1}-\frac{\kappa h^{2}}{12}\Delta_{x}v_{i}^{1}-\frac{\nu}{2}v_{i}^{1}=0,\quad 1\leqslant i\leqslant M,\label{eq38}\\
&v_{i}^{1}=\delta_{x}^{2}u_{i}^{1}-\frac{h^{2}}{12}\delta_{x}^{2}v_{i}^{1},\quad 1\leqslant i\leqslant M.\label{eq39}
\end{align}
Taking the inner product of \eqref{eq38} with $u^{1}$, and combining Lemma \ref{lemma2} with \eqref{eq39}, we have
\begin{align}
\frac{1}{\tau}\|u^{1}\|^{2}-\frac{\mu}{\tau}(v^{1},u^{1})-\frac{\kappa h^{2}}{12}(\Delta_{x}v^{1},u^{1})-\frac{\nu}{2}(v^{1},u^{1})=0.\label{eq40}
\end{align}
Applying \eqref{lem4-2} in Lemma \ref{lemma4}, we have
\begin{align}
&(v^{1},u^{1})\leqslant -|u^{1}|_{1}^{2}-\frac{h^{2}}{18}\|v^{1}\|^{2},\label{eq41}\\
&(\Delta_{x}v^{1},u^{1})=0.\label{eq42}
\end{align}
Substituting \eqref{eq41}--\eqref{eq42} into \eqref{eq40} and a calculation shows that
\begin{align*}
\frac{1}{\tau}\|u^{1}\|^{2}+\left(\frac{\mu}{\tau}+\frac{\nu}{2}\right)\cdot\left(|u^{1}|_{1}^{2}
+\frac{h^{2}}{18}\|v^{1}\|^{2}\right)\leqslant 0.
\end{align*}
Thus, it holds that
\begin{align*}
\|u^{1}\|=0,\quad \|v^{1}\|=0.
\end{align*}
Therefore, \eqref{eq38} and \eqref{eq39} only allow zero solutions,
which implies that \eqref{eq17} and \eqref{eq19} determine $u^{1}$, $v^{1}$ uniquely.

Now we suppose that $u^{k-1}$, $u^{k}$, $v^{k-1}$, $v^{k}$ have been determined.
From \eqref{eq18}--\eqref{eq19}, a linear system of equations with respect to $u^{k+1}$ and $v^{k+1}$ is obtained.
Now we consider the homogenous system of equations as follows
\begin{align}
&\frac{1}{2\tau}u_{i}^{k+1}-\frac{\mu}{2\tau}v_{i}^{k+1}+\frac{\gamma}{2}\psi(u^{k},u^{k+1})_{i}-\frac{\gamma h^{2}}{4}\psi(v^{k},u^{k+1})_{i}+\frac{\kappa}{2}\Delta_{x}u_{i}^{k+1}-\frac{\kappa h^{2}}{12}\Delta_{x}v_{i}^{k+1}-\frac{\nu}{2}v_{i}^{k+1}=0,\nonumber\\
&\qquad\qquad\qquad\qquad\qquad\qquad\qquad\qquad 1\leqslant i\leqslant M,\label{eq43}\\
&v_{i}^{k+1}=\delta_{x}^{2}u_{i}^{k+1}-\frac{h^{2}}{12}\delta_{x}^{2}v_{i}^{k+1},\quad 1\leqslant i\leqslant M.\label{eq44}
\end{align}
Taking the inner product of \eqref{eq43} with $u^{k+1}$ and applying Lemma \ref{lemma2} and \eqref{eq44}, we have
\begin{align}
\frac{1}{2\tau}\|u^{k+1}\|^{2}-\left(\frac{\mu}{2\tau}+\frac{\nu}{2}\right)(v^{k+1},u^{k+1})-\frac{\kappa h^{2}}{12}(\Delta_{x}v^{k+1},u^{k+1})=0.\label{eq45}
\end{align}
Combining \eqref{lem4-2} in Lemma \ref{lemma4} and noticing $S^k=0$, we have
\begin{align}
&(v^{k+1},u^{k+1})\leqslant-|u^{k+1}|_{1}^{2}-\frac{h^{2}}{18}\|v^{k+1}\|^{2},\label{eq46}\\
&(\Delta_{x}v^{k+1},u^{k+1})=0.\label{eq47}
\end{align}
Substituting \eqref{eq46}--\eqref{eq47} into \eqref{eq45}, we have
\begin{align*}
\frac{1}{2\tau}\|u^{k+1}\|^{2}+\left(\frac{\mu}{2\tau}+\frac{\nu}{2}\right)
\cdot\left(|u^{k+1}|_{1}^{2}+\frac{h^{2}}{18}\|v^{k+1}\|^{2}\right)\leqslant0.
\end{align*}
Then it holds that
\begin{align*}
\|u^{k+1}\|=0,\quad \|v^{k+1}\|=0.
\end{align*}
Therefore, \eqref{eq43} and \eqref{eq44} only allow zero solutions,
which implies that \eqref{eq18}--\eqref{eq19} determine $u^{k+1}$ and $v^{k+1}$ uniquely. By the mathematical induction, this completes the proof.
\end{proof}

\section{Convergence and Stability}\label{Sec:6}
\subsection{Convergence}
\setcounter{equation}{0}
\begin{theorem}[Convergence]\label{thm6.1}
Suppose $\{U_{i}^{k}$, $V_{i}^{k}\,|\, 1\leqslant i\leqslant M,\;0\leqslant k\leqslant N\}$ is the solution of
\eqref{eq10}, \eqref{eq12}, \eqref{eq14}, \eqref{eq16}, \eqref{eq16a}
$\{u_{i}^{k}$, $v_{i}^{k}\,|\, 1\leqslant i\leqslant M,\;0\leqslant k\leqslant N\}$
is the solution of \eqref{eq17}--\eqref{eq20a}. Denote
\begin{align*}
e_{i}^{k}=U_{i}^{k}-u_{i}^{k},\quad f_{i}^{k}=V_{i}^{k}-v_{i}^{k},\quad 1\leqslant i\leqslant M,\; 0\leqslant k\leqslant N,
\end{align*}
then there exist positive constants $h_0$, $\tau_0$, such that when
$h\leqslant h_0$, $\tau\leqslant \tau_0$ and $\tau^{2}+h^{4}\leqslant 1/c_{4}$, we have the error estimate
\begin{align}
|e^{k}|_{1}\leqslant c_{4}(\tau^{2}+h^{4}), \quad 0\leqslant k\leqslant N,\label{Err}
\end{align}
where
\begin{align*}
c_{4}=\max\left\{\sqrt{\frac{c_{5}}{\mu}},\sqrt{2c_{10}}\right\},
\end{align*}
with
\begin{align*}
c_{5}=&\;\left[\frac{27}{4}\left(\frac{\nu\tau_{0}}{2}-\mu\right)^{2}+\frac{3\kappa^{2}h_{0}^{2}\tau_{0}^{2}}{16}
+\frac{3\gamma^{2}c_{0}^{2}\tau_{0}^{2}h_{0}^2(h_{0}+1)^2}{4}+\frac{\mu h_{0}^{2}}{16}+\frac{\nu h_{0}^{2}\tau_{0}}{32}
+\frac{3}{2}\left(\mu+\frac{\nu\tau_{0}}{2}\right)^{2}\right.\nonumber\\
&\;\left.+\frac{\kappa^{2}h_{0}^{4}\tau_{0}^{2}}{576\mu}+\frac{\kappa^{2}h_{0}^{2}\tau_{0}^{2}}{24}\right]Lc_{3}^{2}+\frac{3Lc_{1}^{2}}{2},\\
c_{6}=&\;\frac{5\gamma^{2}(Lc_{0}+\sqrt{L})^{2}}{8}
+\frac{5\gamma^{2}}{8}\left(\frac{Lc_{0}h_{0}^{2}}{2}+\frac{3\sqrt{L}}{2}\sqrt{8
+2Lc_{3}^{2}h_{0}^{10}}\right)^{2}+\frac{5\kappa^{2}}{2}+\frac{10\kappa^{2}}{9}+\frac{5\gamma^{2}(Lc_{0}+c_{0})^{2}}{18},\\
c_{7}=&\;5\gamma^{2}(h_{0}c_{0}+c_{0})^{2}+\frac{3\kappa^{2}h_{0}^{2}}{16\mu}+\frac{\kappa^{2}h_{0}^{2}}{48\mu}
+\frac{\kappa^{2}h_{0}^{4}}{72}+\frac{\nu^{2}h_{0}^{2}}{8},\\
c_{8}=&\;\left(\frac{3\mu h_{0}^{2}}{32}+\frac{5\mu^{2}}{2}+\frac{1}{2}+\frac{5\kappa^{2}h_{0}^{2}}{18}+\frac{5\nu^{2}}{2}\right)
Lc_{3}^{2}+\frac{5Lc_{2}^{2}}{2},\\
c_{9}=&\;\max\{c_{6},c_{7},c_{8}\},\\
c_{10}=&\;\exp\left(\frac{6Tc_{9}}{\nu}\right)\cdot\left(\frac{5c_{5}}{4\mu}+\frac{3h_{0}^{2}Lc_{3}^{2}}{8}+\frac{1}{2}\right).
\end{align*}
\end{theorem}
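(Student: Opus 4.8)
The plan is to run a discrete energy argument on the error equations, following the pattern of the proof of Theorem \ref{Theorem:1} but now carrying along the truncation errors and the nonlinear perturbations. First I would subtract the scheme \eqref{eq17}--\eqref{eq20a} from the consistency relations \eqref{eq10}, \eqref{eq12}, \eqref{eq14}, \eqref{eq16}, \eqref{eq16a}; this gives for $\{e_i^k,f_i^k\}$ equations of exactly the shape of \eqref{eq17}--\eqref{eq19} but with right-hand sides $Q_i^0$, $Q_i^k$ and with $S_i^k=R_i^k$ playing the role of the source in the compact relation linking $f$ to $\delta_x^2 e$. Because $\psi(\cdot,\cdot)$ is bilinear, the nonlinear discrepancies split cleanly,
\[
\psi(U^k,U^{\bar k})-\psi(u^k,u^{\bar k})=\psi(U^k,e^{\bar k})+\psi(e^k,u^{\bar k}),
\]
and likewise for the $\tfrac{h^2}{2}\psi(V^k,\cdot)$ piece; moreover $e^0=0$ since $U^0=u^0=\varphi$.

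For the first level I would pair the error version of \eqref{eq17} with $e^{1/2}$. Exactly as in the passage \eqref{eq23}$\to$\eqref{eq28}, Lemma \ref{lemma2} kills $(\Delta_x e^{1/2},e^{1/2})$ and $(\psi(U^0,e^{1/2}),e^{1/2})=(\psi(V^0,e^{1/2}),e^{1/2})=0$, while $e^0=0$ annihilates $\psi(e^0,u^{1/2})$ and leaves only the negligible term $-\tfrac{\gamma h^2}{2}(\psi(f^0,u^{1/2}),e^{1/2})$ (note $\|f^0\|=\mathcal{O}(h^4)$). The term $(\delta_t f^{1/2},e^{1/2})$ is treated by summation by parts in time together with the compact relation as in \eqref{eq25}, and Lemma \ref{lemma4} with $(u,v,S)=(e^{1/2},f^{1/2},R^{1/2})$ supplies the analogues of \eqref{eq26}--\eqref{eq27}, all up to error contributions built from $R^0,R^1$. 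Multiplying through by $\tau$ — which provides the factor upgrading the weaker first-level residual bound $|Q_i^0|\leqslant c_1(\tau+h^4)$ to an $\mathcal{O}((\tau^2+h^4)^2)$ contribution, since $\tau(\tau+h^4)\leqslant\tau^2+h^4$ — and using Cauchy--Schwarz, Young's inequality, Lemma \ref{lemma1} and $h\leqslant h_0$, $\tau\leqslant\tau_0$, one obtains $\mu|e^1|_1^2\leqslant c_5(\tau^2+h^4)^2$, hence $|e^1|_1\leqslant\sqrt{c_5/\mu}\,(\tau^2+h^4)\leqslant c_4(\tau^2+h^4)$. With $|e^0|_1=0$ this starts an induction on the time level.

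For the inductive step, assume $|e^l|_1\leqslant c_4(\tau^2+h^4)$ for $0\leqslant l\leqslant k$. With $\tau^2+h^4\leqslant 1/c_4$ and Lemma \ref{lemma1} this yields $|u^l|_1\leqslant|U^l|_1+1$ and $\|u^l\|_\infty\leqslant\tfrac{\sqrt L}{2}(|U^l|_1+1)$, bounded by a data constant, which is what the nonlinear terms require; the compact relation also gives $\|f^l\|\lesssim h^{-1}|e^l|_1+h^4$ and $|f^l|_1\lesssim h^{-2}|e^l|_1+h^3$. Pairing the error version of \eqref{eq18} with $e^{\bar k}$, Lemma \ref{lemma2} removes $(\psi(U^k,e^{\bar k}),e^{\bar k})$ and $(\psi(V^k,e^{\bar k}),e^{\bar k})$; Lemma \ref{lemma4} with $S=R^{\bar k}$ handles $(\Delta_x f^{\bar k},e^{\bar k})$ and $(f^{\bar k},e^{\bar k})$ as in \eqref{eq33}--\eqref{eq34}, while $(\Delta_t f^k,e^{\bar k})$ is treated by summation by parts in time plus the compact relation as in \eqref{eq32}, again up to error terms involving $R$ and its time-difference quotients $\Delta_t R^k$ (which are $\mathcal{O}(h^4)$ thanks to temporal smoothness). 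The two surviving nonlinearities $\gamma(\psi(e^k,u^{\bar k}),e^{\bar k})$ and $-\tfrac{\gamma h^2}{2}(\psi(f^k,u^{\bar k}),e^{\bar k})$ are expanded with Lemma \ref{lemma5} so that at most one discrete difference falls on $e^k$ or $f^k$ inside an $L^2$ pairing, and then bounded by $\varepsilon|e^{\bar k}|_1^2+C|e^k|_1^2+C(\tau^2+h^4)^2$ via Lemma \ref{lemma1} and Young's inequality. Collecting everything into an error energy $\mathcal{E}^k$ — a suitably normalized analogue of the functional $E$ of Theorem \ref{Theorem:1} with $(u,v)$ replaced by $(e,f)$, which is nonnegative and satisfies $\tfrac12|e^{k+1}|_1^2\leqslant\mathcal{E}^k$ — choosing $\varepsilon$ small enough to be absorbed by the strictly positive dissipation $2\nu\tau\big(|e^{\bar k}|_1^2+\tfrac{h^2}{18}\|f^{\bar k}\|^2\big)$ coming from the $\nu$-term, and using $|e^k|_1^2\lesssim\mathcal{E}^{k-1}$, I arrive at $\mathcal{E}^k-\mathcal{E}^{k-1}\leqslant C\tau\big(\mathcal{E}^k+\mathcal{E}^{k-1}\big)+C\tau(\tau^2+h^4)^2$. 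For $\tau$ small this rearranges to a standard discrete Gronwall inequality whose solution is $\mathcal{E}^k\leqslant c_{10}(\tau^2+h^4)^2$, hence $|e^{k+1}|_1\leqslant\sqrt{2c_{10}}\,(\tau^2+h^4)\leqslant c_4(\tau^2+h^4)$, closing the induction; the constants $c_6,\dots,c_{10}$ are precisely the accumulated output of these Young's-inequality and norm-equivalence estimates.

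The hard part is the strongly nonlinear contribution, above all $-\tfrac{\gamma h^2}{2}(\psi(f^k,u^{\bar k}),e^{\bar k})$: since $f^k$ is tied to $\delta_x^2 e^k$, the factors $\|f^k\|$ and $|f^k|_1$ carry negative powers of $h$, and the estimate closes only because the compact coupling carries the matching $h^2$ and because Lemma \ref{lemma5} lets one arrange that no inner product contains more than one grid difference of the error. The other delicate point is the bookkeeping that keeps every leftover truncation term at size $(\tau^2+h^4)^2$ with a $\tau$ weight — in particular the weaker first-level residual $Q^0=\mathcal{O}(\tau+h^4)$ and the time-difference quotients of $R$ — and it is this accounting, rather than any single inequality, that forces the elaborate form of the constants $c_5,\dots,c_{10}$.
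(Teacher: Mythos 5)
Your outline is correct and would deliver the stated estimate, and its skeleton (error system, bilinear splitting of the $\psi$-differences, Lemmas \ref{lemma1}--\ref{lemma5}, induction on the time level, discrete Gronwall, and the observation that the $h^2$ prefactor of the compact nonlinearity offsets the negative powers of $h$ hidden in $f$) coincides with the paper's. The genuine difference is the test function at interior levels: you pair the error equation with $e^{\bar k}$, whereas the paper pairs with $\Delta_t e^k$ (and with $\delta_t e^{1/2}$ at the first level, which for $e^0=0$ is just your choice rescaled). Your choice buys extra cancellations from Lemma \ref{lemma2} — $(\Delta_x e^{\bar k},e^{\bar k})=0$ and $(\psi(\cdot,e^{\bar k}),e^{\bar k})=0$ — and produces the $H^1$-type telescoping energy from the $\mu\Delta_t f^k$ term, with the $\nu$-term acting as genuine dissipation; the resulting Gronwall constant then carries $1/\mu$ in place of the paper's $1/\nu$ (the paper instead extracts the telescoping energy $F^k$ from $-\nu(f^{\bar k},\Delta_t e^k)$, which is why its constants are of the form $\exp(6Tc_9/\nu)$, and it must absorb the many $\tfrac{1}{10}\|\Delta_t e^k\|^2$ fragments that your cancellations avoid). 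Both routes need $\mu>0$ and $\nu>0$, so neither is more general; yours is somewhat leaner in the bookkeeping, the paper's keeps the dependence on $\nu$ explicit for the ``almost unconditional'' stability discussion.

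One wrinkle you should repair: your splitting $\psi(U^k,U^{\bar k})-\psi(u^k,u^{\bar k})=\psi(U^k,e^{\bar k})+\psi(e^k,u^{\bar k})$ places the numerical solution at the \emph{unknown} level inside the coefficient $u^{\bar k}=\tfrac12(u^{k+1}+u^{k-1})$, so $\|u^{\bar k}\|_\infty$ and $\|\Delta_x u^{\bar k}\|_\infty$ are not controlled by the induction hypothesis (which stops at level $l$). Either adopt the paper's decomposition $\psi(u^k,e^{\bar k})+\psi(e^k,U^{\bar k})$, where the first piece is annihilated by your test function and the second involves only the exact solution, or split further $u^{\bar k}=U^{\bar k}-e^{\bar k}$ and absorb the resulting $e$-quadratic terms (harmless, since $\|e^k\|_\infty\leqslant\tfrac{\sqrt L}{2}|e^k|_1\leqslant\tfrac{\sqrt L}{2}$ under $\tau^2+h^4\leqslant 1/c_4$, so they feed into the Gronwall term). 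The same remark applies to the $\tfrac{\gamma h^2}{2}\psi(f^k,u^{\bar k})$ piece. With that adjustment the argument closes as you describe.
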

\begin{proof}
Subtracting \eqref{eq10}, \eqref{eq12}, \eqref{eq14}, \eqref{eq16}, \eqref{eq16a} from \eqref{eq17}--\eqref{eq20a}, the error system is written as
\begin{align}
&\delta_{t}e_{i}^{\frac{1}{2}}-\mu\delta_{t}f_{i}^{\frac{1}{2}}+\gamma\psi(u^{0},e^{\frac{1}{2}})_{i}
-\frac{\gamma h^{2}}{2}[\psi(V^{0},U^{\frac{1}{2}})_{i}-\psi(v^{0},u^{\frac{1}{2}})_{i}]+\kappa\Delta_{x}e_{i}^{\frac{1}{2}}
\nonumber\\ & -\frac{\kappa h^{2}}{6}\Delta_{x}f_{i}^{\frac{1}{2}}-\nu f_{i}^{\frac{1}{2}}=Q_{i}^0,\quad 1\leqslant i\leqslant M,\label{eq48}\\
&\Delta_{t}e_{i}^{k}-\mu\Delta_{t}f_{i}^{k}+\gamma[\psi(U^{k},U^{\bar{k}})_{i}-\psi(u^{k},u^{\bar{k}})_{i}]
-\frac{\gamma h^{2}}{2}[\psi(V^{k},U^{\bar{k}})_{i}-\psi(v^{k},u^{\bar{k}})_{i}]\nonumber\\
& +\kappa\Delta_{x}e_{i}^{\bar{k}}
-\frac{\kappa h^{2}}{6}\Delta_{x}f_{i}^{\bar{k}}-\nu f_{i}^{\bar{k}}=Q_{i}^{k},\quad 1\leqslant i\leqslant M,\; 1\leqslant k\leqslant N-1,\label{eq49}\\
&f_{i}^{k}=\delta_{x}^{2}e_{i}^{k}-\frac{h^{2}}{12}\delta_{x}^{2}f_{i}^{k}+R_{i}^{k},\quad 1\leqslant i\leqslant M,\; 0\leqslant k\leqslant N,\label{eq50}\\
&e_{i}^{0}=0,\quad 1\leqslant i\leqslant M,\label{eq51}\\
&e_{i}^{k}=e_{i+M}^{k}, \quad f_{i}^{k}=f_{i+M}^{k},\quad 1\leqslant i\leqslant M,\; 0\leqslant k\leqslant N. \label{eq51a}
\end{align}
Denote
\begin{align}
F^{k}=\frac{1}{2}\left[(|e^{k}|_{1}^{2}+|e^{k-1}|_{1}^{2})+\frac{h^{2}}{12}(\|f^{k}\|^{2}+\|f^{k-1}\|^{2})
-\frac{h^{4}}{144}(|f^{k}|_{1}^{2}+|f^{k-1}|_{1}^{2})\right],\quad 1\leqslant k\leqslant N.\label{F}
\end{align}
From \eqref{eq3}, we have
\begin{align}
&|U^{k}|_{1}\leqslant\sqrt{L}c_{0},\quad \|U^{k}\|_{\infty}\leqslant c_{0},\quad 0\leqslant k\leqslant N,\label{ass1}\\
&\|V^{k}\|\leqslant\sqrt{L}c_{0},\quad \|V^{k}\|_{\infty}\leqslant c_{0},\quad
|V^{k}|_{1}\leqslant\sqrt{L}c_{0},\quad 0\leqslant k\leqslant N.\label{ass2}
\end{align}
Taking the inner product of \eqref{eq50} with $f^{k}$, we have
\begin{align*}
\|f^{k}\|^{2}=\;&(\delta_{x}^{2}e^{k},f^{k})-\frac{h^{2}}{12}(\delta_{x}^{2}f^{k},f^{k})+(R^{k},f^{k})\nonumber\\
\leqslant\;&\|\delta_{x}^{2}e^{k}\|\cdot\|f^{k}\|+\frac{h^{2}}{12}|f^{k}|_{1}^{2}+\|R^{k}\|\cdot\|f^{k}\|\nonumber\\
\leqslant\;&\frac{1}{6}\|f^{k}\|^{2}+\frac{3}{2}\|\delta_{x}^{2}e^{k}\|^{2}+\frac{1}{3}\|f^{k}\|^{2}
+\frac{1}{6}\|f^{k}\|^{2}+\frac{3}{2}\|R^{k}\|^{2}\nonumber\\
\leqslant\;&\frac{2}{3}\|f^{k}\|^{2}+\frac{6}{h^{2}}|e^{k}|_{1}^{2}+\frac{3}{2}\|R^{k}\|^{2},\quad 0\leqslant k\leqslant N.
\end{align*}
Thus, we have
\begin{align}
\|f^{k}\|^{2}\leqslant\frac{18}{h^{2}}|e^{k}|_{1}^{2}+\frac{9}{2}\|R^{k}\|^{2},\quad 0\leqslant k\leqslant N.\label{fk}
\end{align}
Taking the inner product of \eqref{eq48} with $\delta_{t}e^{\frac{1}{2}}$, we have
\begin{align}
&\|\delta_{t}e^{\frac{1}{2}}\|^{2}-\mu(\delta_{t}f^{\frac{1}{2}},\delta_{t}e^{\frac{1}{2}})
+\gamma(\psi(u^{0},e^{\frac{1}{2}}),\delta_{t}e^{\frac{1}{2}})-\frac{\gamma h^{2}}{2}(\psi(V^{0},U^{\frac{1}{2}})-\psi(v^{0},u^{\frac{1}{2}}),\delta_{t}e^{\frac{1}{2}})\nonumber\\
&+\kappa(\Delta_{x}e^{\frac{1}{2}},\delta_{t}e^{\frac{1}{2}})-\frac{\kappa h^{2}}{6}(\Delta_{x}f^{\frac{1}{2}},\delta_{t}e^{\frac{1}{2}})-\nu(f^{\frac{1}{2}},\delta_{t}e^{\frac{1}{2}})
=(Q^0,\delta_{t}e^{\frac{1}{2}}).\label{eq52}
\end{align}
From \eqref{eq51}, we have
\begin{align}
&\|\delta_{t}e^{\frac{1}{2}}\|^{2}=\frac{1}{\tau^{2}}\|e^{1}\|^{2},\label{eq53}\\
&(Q^0,\delta_{t}e^{\frac{1}{2}})=\frac{1}{\tau}(Q^0,e^{1}).\label{eq54}
\end{align}
Applying \eqref{lem4-1} in Lemma \ref{lemma4}, we have
\begin{align}
&(\delta_{t}f^{\frac{1}{2}},\delta_{t}e^{\frac{1}{2}})=\frac{1}{\tau^{2}}(f^{1},e^{1})-\frac{1}{\tau^{2}}(f^{0},e^{1})\nonumber\\
=\;&\frac{1}{\tau^{2}}\left[-|e^{1}|_{1}^{2}-\frac{h^{2}}{12}\|f^{1}\|^{2}+\frac{h^{4}}{144}|f^{1}|_{1}^{2}
+\frac{h^{2}}{12}(f^{1},R^{1})
+(R^{1},e^{1})\right]-\frac{1}{\tau^{2}}(f^{0},e^{1}).\label{eq55}
\end{align}
According to the definition of $\psi(u,v)_{i}$ and applying Lemma \ref{lemma2}, we have
\begin{align}
&(\psi(u^{0},e^{\frac{1}{2}}),\delta_{t}e^{\frac{1}{2}})=\frac{1}{2\tau}(\psi(u^{0},e^{1}),e^{1})=0.\label{eq56}
\end{align}
Moreover, combining \eqref{ass1} and Lemma \ref{lemma5}, we have
\begin{align}
&(\psi(V^{0},U^{\frac{1}{2}})-\psi(v^{0},u^{\frac{1}{2}}),\delta_{t}e^{\frac{1}{2}})\nonumber\\
=\;&(\psi(V^{0},e^{\frac{1}{2}})+\psi(f^{0},U^{\frac{1}{2}})-\psi(f^{0},e^{\frac{1}{2}}),\delta_{t}e^{\frac{1}{2}})
\nonumber\\
=\;&\frac{1}{2\tau}(\psi(V^{0},e^{1}),e^{1})+\frac{1}{\tau}(\psi(f^{0},U^{\frac{1}{2}}),e^{1})
-\frac{1}{2\tau}(\psi(f^{0},e^{1}),e^{1})\nonumber\\
=\;&\frac{1}{\tau}(\psi(f^{0},U^{\frac{1}{2}}),e^{1})\nonumber\\
=\;&\frac{h}{3\tau}\sum_{i=1}^{M}\left[f_i^0\Delta_xU_i^{\frac{1}{2}} + \Delta_x(f^0U^{\frac{1}{2}})_i\right] \cdot e_{i}^{1} \nonumber\\
=\; & \frac{h}{3\tau} \sum_{i=1}^M\left(2f_i^0 \cdot \Delta_xU_i^{\frac{1}{2}} +\frac{1}{2}U_{i-1}^{\frac{1}{2}}\delta_x f_{i-\frac{1}{2}}^0 + \frac{1}{2}U_{i+1}^{\frac{1}{2}}\delta_x f_{i+\frac{1}{2}}^0\right)\cdot e_i^1 \nonumber\\
\leqslant\;&\frac{c_{0}}{3\tau}\left(2+\frac{2}{h}\right)\|f^{0}\|\cdot\|e^{1}\|.\label{eq57}
\end{align}
Noticing \eqref{eq51} and applying Lemma \ref{lemma2}, we have
\begin{align}
&(\Delta_{x}e^{\frac{1}{2}},\delta_{t}e^{\frac{1}{2}})=\frac{1}{2\tau}(\Delta_{x}e^{1},e^{1})=0.\label{eq58}
\end{align}
Applying \eqref{lem4-1} and \eqref{lem4-3} in Lemma \ref{lemma4}, we have
\begin{align}
(\Delta_{x}f^{\frac{1}{2}},\delta_{t}e^{\frac{1}{2}})
=\;&\frac{1}{2\tau}(\Delta_{x}f^{1},e^{1})+\frac{1}{2\tau}(\Delta_{x}f^{0},e^{1})\nonumber\\
=\;&\frac{1}{2\tau}\left[\frac{h^{2}}{12}(\Delta_{x}f^{1},R^{1})+(\Delta_{x}R^{1},e^{1})\right]
+\frac{1}{2\tau}(\Delta_{x}f^{0},e^{1})\label{eq59}
\end{align}
and
\begin{align}
&(f^{\frac{1}{2}},\delta_{t}e^{\frac{1}{2}})\nonumber\\
=\;&\frac{1}{2\tau}(f^{1},e^{1})+\frac{1}{2\tau}(f^{0},e^{1})\nonumber\\
=\;&\frac{1}{2\tau}\left[-|e^{1}|_{1}^{2}-\frac{h^{2}}{12}\|f^{1}\|^{2}+\frac{h^{4}}{144}|f^{1}|_{1}^{2}
+\frac{h^{2}}{12}(f^{1},R^{1})
+(R^{1},e^{1})\right]+\frac{1}{2\tau}(f^{0},e^{1}).\label{eq60}
\end{align}
Substituting \eqref{eq53}--\eqref{eq60} into \eqref{eq52}, we have
\begin{align*}
\|e^{1}\|^{2}
\leqslant\;&\mu\left[-|e^{1}|_{1}^{2}-\frac{h^{2}}{12}\|f^{1}\|^{2}+\frac{h^{4}}{144}|f^{1}|_{1}^{2}
+\frac{h^{2}}{12}(f^{1},R^{1})+(R^{1},e^{1})-(f^{0},e^{1})\right]+\tau(Q^0,e^{1})\nonumber\\
&+\frac{\gamma h^{2}}{2}\cdot \frac{c_{0}\tau}{3}\left(2+\frac{2}{h}\right)\|f^{0}\|\cdot\|e^{1}\|
+\frac{\kappa h^{2}\tau}{6}\left[\frac{h^{2}}{24}(\Delta_{x}f^{1},R^{1})
+\frac{1}{2}(\Delta_{x}R^{1},e^{1})
+\frac{1}{2}(\Delta_{x}f^{0},e^{1})\right]\nonumber\\
&+\frac{\nu\tau}{2}\left[-|e^{1}|_{1}^{2}-\frac{h^{2}}{12}\|f^{1}\|^{2}+\frac{h^{4}}{144}|f^{1}|_{1}^{2}
+\frac{h^{2}}{12}(f^{1},R^{1})
+(R^{1},e^{1})+(f^{0},e^{1})\right]\nonumber\\
=\;&-\left(\mu+\frac{\nu\tau}{2}\right)|e^{1}|_{1}^{2}
-\left(\frac{\mu h^{2}}{12}+\frac{\nu h^{2}\tau}{24}\right)\|f^{1}\|^{2}
+\left(\frac{\mu h^{4}}{144}+\frac{\nu h^{4}\tau}{288}\right)|f^{1}|_{1}^{2}+\tau(Q^0,e^{1})\nonumber\\
&+\left(\frac{\mu h^{2}}{12}+\frac{\nu h^{2}\tau}{24}\right)\cdot(f^{1},R^{1})
+\left(\mu+\frac{\nu\tau}{2}\right)\cdot(R^{1},e^{1})+\left(\frac{\nu\tau}{2}-\mu\right)\cdot(f^{0},e^{1})\nonumber\\
&+\frac{\kappa h^{4}\tau}{144}\cdot(\Delta_{x}f^{1},R^{1})
+\frac{\kappa h^{2}\tau}{12}\cdot(\Delta_{x}R^{1},e^{1})+\frac{\kappa h^{2}\tau}{12}\cdot(\Delta_{x}f^{0},e^{1})
+\frac{\gamma c_{0}\tau h(h+1)}{3}\cdot\|f^{0}\|\cdot\|e^{1}\|\nonumber\\
\leqslant\;&-\left(\mu+\frac{\nu\tau}{2}\right)|e^{1}|_{1}^{2}-\left(\frac{\mu h^{2}}{12}+\frac{\nu h^{2}\tau}{24}\right)\|f^{1}\|^{2}
+\left(\frac{\mu h^{2}}{36}+\frac{\nu h^{2}\tau}{72}\right)\|f^{1}\|^{2}
+\frac{1}{6}\|e^{1}\|^{2}\nonumber\\
&+\frac{3\tau^{2}}{2}\|Q^0\|^{2}+\left(\frac{\mu h^{2}}{36}+\frac{\nu h^{2}\tau}{72}\right)\|f^{1}\|^{2}+\frac{3}{4}\left(\frac{\mu h^{2}}{12}+\frac{\nu h^{2}\tau}{24}\right)\|R^{1}\|^{2}+\frac{1}{6}\|e^{1}\|^{2}\nonumber\\
&+\frac{3}{2}\left(\mu+\frac{\nu\tau}{2}\right)^{2}\|R^{1}\|^{2}+\frac{1}{6}\|e^{1}\|^{2}
+\frac{3}{2}\left(\frac{\nu\tau}{2}-\mu\right)^{2}\|f^{0}\|^{2}+\frac{h^{2}}{4}\left(\frac{\mu h^{2}}{36}+\frac{\nu h^{2}\tau}{72}\right)|f^{1}|_{1}^{2}\nonumber\\
&+\left(\frac{\kappa h^{4}\tau}{144}\right)^{2}\cdot\frac{1}{h^{2}\left(\frac{\mu h^{2}}{36}+\frac{\nu h^{2}\tau}{72}\right)}\|R^{1}\|^{2}+\frac{1}{6}\|e^{1}\|^{2}
+\frac{\kappa^{2}h^{4}\tau^{2}}{96}\|\Delta_{x}R^{1}\|^{2}+\frac{1}{6}\|e^{1}\|^{2}\nonumber\\
&+\frac{\kappa^{2}h^{4}\tau^{2}}{96}\|\Delta_{x}f^{0}\|^{2}+\frac{1}{6}\|e^{1}\|^{2}
+\frac{\gamma^{2}c_{0}^{2}\tau^{2}h^2(h+1)^2}{6}\|f^{0}\|^{2}\nonumber\\
\leqslant\;&-\left(\mu+\frac{\nu\tau}{2}\right)|e^{1}|_{1}^{2}+\|e^{1}\|^{2}
+\left[\frac{3}{2}\left(\frac{\nu\tau}{2}-\mu\right)^{2}+\frac{\kappa^{2}h^{2}\tau^{2}}{24}
+\frac{\gamma^{2}c_{0}^{2}\tau^{2}h^2(h+1)^2}{6}\right]\|f^{0}\|^{2}\nonumber\\
&+\frac{3\tau^{2}}{2}\|Q^0\|^{2}+\left[\frac{3}{4}\left(\frac{\mu h^{2}}{12}+\frac{\nu h^{2}\tau}{24}\right)
+\frac{3}{2}\left(\mu+\frac{\nu\tau}{2}\right)^{2}
+\frac{\kappa^{2}h^{4}\tau^{2}}{288(2\mu+\nu\tau)}+\frac{\kappa^{2}h^{2}\tau^{2}}{24}\right]\|R^{1}\|^{2},
\end{align*}
when $h\leqslant h_{0}$, $\tau\leqslant \tau_{0}$, we have
\begin{align}
\|e^{1}\|^{2}\leqslant\;&-\left(\mu+\frac{\nu\tau}{2}\right)|e^{1}|_{1}^{2}+\|e^{1}\|^{2}
+\left[\frac{3}{2}\left(\frac{\nu\tau_{0}}{2}-\mu\right)^{2}+\frac{\kappa^{2}h_{0}^{2}\tau_{0}^{2}}{24}
+\frac{\gamma^{2}c_{0}^{2}\tau_{0}^{2}h_{0}^2(h_{0}+1)^2}{6}\right]\|f^{0}\|^{2}\nonumber\\
&+\frac{3\tau^{2}}{2}\|Q^0\|^{2}+\left[\frac{3}{4}\left(\frac{\mu h_{0}^{2}}{12}+\frac{\nu h_{0}^{2}\tau_{0}}{24}\right)
+\frac{3}{2}\left(\mu+\frac{\nu\tau_{0}}{2}\right)^{2}
+\frac{\kappa^{2}h_{0}^{4}\tau_{0}^{2}}{576\mu}+\frac{\kappa^{2}h_{0}^{2}\tau_{0}^{2}}{24}\right]\|R^{1}\|^{2}.\label{eq61}
\end{align}
Taking $k=0$ in \eqref{fk}, substituting the result into \eqref{eq61} and using \eqref{eq11}, \eqref{eq15}, we have
\begin{align*}
&\;\left(\mu+\frac{\nu\tau}{2}\right)|e^{1}|_{1}^{2}\nonumber\\
\leqslant\;&\frac{9}{2}\left[\frac{3}{2}\left(\frac{\nu\tau_{0}}{2}-\mu\right)^{2}
+\frac{\kappa^{2}h_{0}^{2}\tau_{0}^{2}}{24}+\frac{\gamma^{2}c_{0}^{2}\tau_{0}^{2}h_{0}^2(h_{0}+1)^2}{6}\right]\|R^{0}\|^{2}
+\frac{3\tau^{2}}{2}\|Q^0\|^{2}\nonumber\\
&\;+\left[\frac{3}{4}\left(\frac{\mu h_{0}^{2}}{12}+\frac{\nu h_{0}^{2}\tau_{0}}{24}\right)
+\frac{3}{2}\left(\mu+\frac{\nu\tau_{0}}{2}\right)^{2}
+\frac{\kappa^{2}h_{0}^{4}\tau_{0}^{2}}{576\mu}+\frac{\kappa^{2}h_{0}^{2}\tau_{0}^{2}}{24}\right]\|R^{1}\|^{2}\nonumber\\
\leqslant\;&\left[\frac{27}{4}\left(\frac{\nu\tau_{0}}{2}-\mu\right)^{2}
+\frac{3\kappa^{2}h_{0}^{2}\tau_{0}^{2}}{16}
+\frac{3\gamma^{2}c_{0}^{2}\tau_{0}^{2}h_{0}^2(h_{0}+1)^2}{4}+\frac{\mu h_{0}^{2}}{16}+\frac{\nu h_{0}^{2}\tau_{0}}{32}
+\frac{3}{2}\left(\mu+\frac{\nu\tau_{0}}{2}\right)^{2}\right.\nonumber\\
&\;\left.+\frac{\kappa^{2}h_{0}^{4}\tau_{0}^{2}}{576\mu}+\frac{\kappa^{2}h_{0}^{2}\tau_{0}^{2}}{24}\right]Lc_{3}^{2}h^{8}
+\frac{3\tau^{2}Lc_{1}^{2}}{2}(\tau+h^{4})^{2}\nonumber\\
\leqslant\;& c_{5}(\tau^{2}+h^{4})^{2}.
\end{align*}
Rearranging the above term, we have
\begin{align}
|e^{1}|_{1}^{2}\leqslant \frac{c_{5}}{\mu+\frac{\nu\tau}{2}}(\tau^{2}+h^{4})^{2}\leqslant \frac{c_{5}}{\mu}(\tau^{2}+h^{4})^{2}.\label{eq63}
\end{align}
Consequently
\begin{align}
|e^{1}|_{1}\leqslant c_{4}(\tau^{2}+h^{4}).\label{Err1}
\end{align}

From \eqref{F}, \eqref{fk} and \eqref{eq63}, we have
\begin{align}
F^{1}=\;&\frac{1}{2}\left[|e^{1}|_{1}^{2}+\frac{h^{2}}{12}(\|f^{1}\|^{2}+\|f^{0}\|^{2})
-\frac{h^{4}}{144}(|f^{1}|_{1}^{2}+|f^{0}|_{1}^{2})\right]\nonumber\\
\leqslant\;&\frac{1}{2}¡¢|e^{1}|_{1}^{2}+\frac{h^{2}}{24}(\|f^{1}\|^{2}+\|f^{0}\|^{2})\nonumber\\
\leqslant\;&\frac{1}{2}|e^{1}|_{1}^{2}+\frac{h^{2}}{24}
\left(\frac{18}{h^{2}}|e^{1}|_{1}^{2}+\frac{9}{2}\|R^{1}\|^{2}+\frac{9}{2}\|R^{0}\|^{2}\right)\nonumber\\
\leqslant\;&\left(\frac{5c_{5}}{4\mu}+\frac{3h^{2}Lc_{3}^{2}}{8}\right)(\tau^{2}+h^{4})^{2}.\label{eqF1}
\end{align}

Taking the inner product of \eqref{eq49} with $\Delta_{t}e^{k}$, we have
\begin{align}
&\|\Delta_{t}e^{k}\|^{2}-\mu(\Delta_{t}f^{k},\Delta_{t}e^{k})+\gamma(\psi(U^{k},U^{\bar{k}})
-\psi(u^{k},u^{\bar{k}}),\Delta_{t}e^{k})-\frac{\gamma h^{2}}{2}(\psi(V^{k},U^{\bar{k}})
-\psi(v^{k},u^{\bar{k}}),\Delta_{t}e^{k})\nonumber\\&+\kappa(\Delta_{x}e^{\bar{k}},\Delta_{t}e^{k})
-\frac{\kappa h^{2}}{6}(\Delta_{x}f^{\bar{k}},\Delta_{t}e^{k})
-\nu(f^{\bar{k}},\Delta_{t}e^{k})=(Q^{k},\Delta_{t}e^{k}),\quad 1\leqslant k\leqslant N-1.\label{eq66}
\end{align}

Now we suppose that $|e^{k}|_{1}\leqslant c_{4}(\tau^{2}+h^{4})$ holds for $k=1,2,\cdots,l$ with $1\leqslant l\leqslant N-1$. When $(\tau^{2}+h^{4})\leqslant 1/c_{4}$, using \eqref{ass1}--\eqref{fk}, we have
\begin{align}
&\|f^{k}\|\leqslant3\sqrt{\frac{2}{h^{2}}+\frac{Lc_{3}^{2}h^{8}}{2}},\quad 1\leqslant k\leqslant l,\label{ass3}\\
&|u^{k}|_{1}\leqslant |U^{k}|_{1}+|e^{k}|_{1}\leqslant\sqrt{L}c_{0}+1,\quad 1\leqslant k\leqslant l,\label{ass4}\\
&\|u^{k}\|_{\infty}\leqslant\frac{\sqrt{L}}{2}|u^{k}|_{1}\leqslant\frac{\sqrt{L}}{2}\left(\sqrt{L}c_{0}+1\right),
\quad 1\leqslant k\leqslant l,\label{ass5}\\
&|v^{k}|_{1}\leqslant|V^{k}|_{1}+|f^{k}|_{1}\leqslant\sqrt{L}c_{0}+\frac{2}{h}\|f^{k}\|\leqslant\sqrt{L}c_{0}+3\sqrt{\frac{8}{h^{4}}
+2Lc_{3}^{2}h^{6}},\quad 1\leqslant k\leqslant l,\label{ass6}\\
&\|v^{k}\|_{\infty}\leqslant\frac{\sqrt{L}}{2}|v^{k}|_{1}\leqslant\frac{Lc_{0}}{2}+\frac{3\sqrt{L}}{2}\sqrt{\frac{8}{h^{4}}
+2Lc_{3}^{2}h^{6}},\quad 1\leqslant k\leqslant l.\label{ass7}
\end{align}
Using \eqref{eq50} and applying \eqref{lem4-1} in Lemma \ref{lemma4}, we have
\begin{align}
\;&(\Delta_{t}f^{k},\Delta_{t}e^{k})\notag \\
=\;&-|\Delta_{t}e^{k}|_{1}^{2}-\frac{h^{2}}{12}\|\Delta_{t}f^{k}\|^{2}
+\frac{h^{4}}{144}|\Delta_{t}f^{k}|_{1}^{2}+\frac{h^{2}}{12}(\Delta_{t}f^{k},\Delta_{t}R^{k})
+(\Delta_{t}R^{k},\Delta_{t}e^{k})\nonumber\\
\leqslant\;&-|\Delta_{t}e^{k}|_{1}^{2}-\frac{h^{2}}{12}\|\Delta_{t}f^{k}\|^{2}
+\frac{h^{4}}{144}|\Delta_{t}f^{k}|_{1}^{2}+\frac{h^{2}}{12}\|\Delta_{t}f^{k}\|\cdot\|\Delta_{t}R^{k}\|
+\|\Delta_{t}R^{k}\|\cdot\|\Delta_{t}e^{k}\|,\notag\\
&\qquad\qquad\qquad\qquad\qquad\qquad 1\leqslant k\leqslant l.\label{eq67}
\end{align}
Due to
\begin{align*}
&\psi(U^{k},U^{\bar{k}})_{i}-\psi(u^{k},u^{\bar{k}})_{i}\nonumber\\
=&\psi(U^{k},U^{\bar{k}})_{i}-\psi(U^{k}-e^{k},U^{\bar{k}}-e^{\bar{k}})_{i}\nonumber\\
=&\psi(u^{k},e^{\bar{k}})_{i}+\psi(e^{k},U^{\bar{k}})_{i}\nonumber\\
=&\frac{1}{3}\left[u_{i}^{k}\Delta_{x}e_{i}^{\bar{k}}+\Delta_{x}(u^{k}e^{\bar{k}})_{i}\right]
+\frac{1}{3}\left[e_{i}^{k}\Delta_{x}U_{i}^{\bar{k}}+\Delta_{x}(e^{k}U^{\bar{k}})_{i}\right].
\end{align*}
Applying Lemma \ref{lemma5}, we have
\begin{align}
&\psi(U^{k},U^{\bar{k}})_{i}-\psi(u^{k},u^{\bar{k}})_{i}\nonumber\\
=&\frac{1}{3}\left[u_{i}^{k}\Delta_{x}e_{i}^{\bar{k}}
+\frac{1}{2}\left(\delta_{x}e_{i+\frac{1}{2}}^{\bar{k}}\right)u_{i+1}^{k}
+\frac{1}{2}\left(\delta_{x}e_{i-\frac{1}{2}}^{\bar{k}}\right)u_{i-1}^{k}+e_{i}^{\bar{k}}\Delta_{x}u_{i}^{k}\right]\nonumber\\
&+\frac{1}{3}\left[e_{i}^{k}\Delta_{x}U_{i}^{\bar{k}}+\frac{1}{2}\left(\delta_{x}e_{i+\frac{1}{2}}^{k}\right)U_{i+1}^{\bar{k}}
+\frac{1}{2}\left(\delta_{x}e_{i-\frac{1}{2}}^{k}\right)U_{i-1}^{\bar{k}}+e_{i}^{k}\Delta_{x}U_{i}^{\bar{k}}\right].\label{eq68}
\end{align}
Combining \eqref{ass1}, \eqref{ass4}, \eqref{ass5} with \eqref{eq68}, we have
\begin{align}
&-\left(\psi(U^{k},U^{\bar{k}})-\psi(u^{k},u^{\bar{k}}),\Delta_{t}e^{k}\right)\nonumber\\
=&-\frac{h}{3}\sum_{i=1}^{M-1}\left[u_{i}^{k}\Delta_{x}e_{i}^{\bar{k}}
+\frac{1}{2}\left(\delta_{x}e_{i+\frac{1}{2}}^{\bar{k}}\right)u_{i+1}^{k}
+\frac{1}{2}\left(\delta_{x}e_{i-\frac{1}{2}}^{\bar{k}}\right)u_{i-1}^{k}+e_{i}^{\bar{k}}\Delta_{x}u_{i}^{k}\right]
\cdot\Delta_{t}e_{i}^{k}\nonumber\\
&-\frac{h}{3}\sum_{i=1}^{M-1}\left[e_{i}^{k}\Delta_{x}U_{i}^{\bar{k}}+\frac{1}{2}\left(\delta_{x}e_{i+\frac{1}{2}}^{k}\right)
U_{i+1}^{\bar{k}}+\frac{1}{2}\left(\delta_{x}e_{i-\frac{1}{2}}^{k}\right)U_{i-1}^{\bar{k}}+e_{i}^{k}\Delta_{x}U_{i}^{\bar{k}}\right]
\cdot\Delta_{t}e_{i}^{k}\nonumber\\
\leqslant&\frac{1}{3}\left(\|u^{k}\|_{\infty}\cdot|e^{\bar{k}}|_{1}+\frac{1}{2}|e^{\bar{k}}|_{1}\cdot\|u^{k}\|_{\infty}
+\frac{1}{2}|e^{\bar{k}}|_{1}\cdot\|u^{k}\|_{\infty}+\|e^{\bar{k}}\|_{\infty}\cdot|u^{k}|_{1}\right)\cdot\|\Delta_{t}e^{k}\|\nonumber\\
&+\frac{1}{3}\left(\|e^{k}\|_{\infty}\cdot|U^{\bar{k}}|_{1}+\frac{1}{2}|e^{k}|_{1}\cdot\|U^{\bar{k}}\|_{\infty}
+\frac{1}{2}|e^{k}|_{1}\cdot\|U^{\bar{k}}\|_{\infty}+\|e^{k}\|_{\infty}\cdot|U^{\bar{k}}|_{1}\right)\cdot\|\Delta_{t}e^{k}\|\nonumber\\
\leqslant&\frac{1}{3}\left[\frac{\sqrt{L}}{2}\left(\sqrt{L}c_{0}+1\right)\cdot|e^{\bar{k}}|_{1}
+\frac{\sqrt{L}}{2}\left(\sqrt{L}c_{0}+1\right)\cdot|e^{\bar{k}}|_{1}+\left(\sqrt{L}c_{0}+1\right)\cdot
\frac{\sqrt{L}}{2}|e^{\bar{k}}|_{1}\right]\cdot\|\Delta_{t}e^{k}\|\nonumber\\
&+\frac{1}{3}\left[\sqrt{L}c_{0}\cdot\frac{\sqrt{L}}{2}|e^{k}|_{1}+c_{0}\cdot|e^{k}|_{1}+\sqrt{L}c_{0}
\cdot\frac{\sqrt{L}}{2}|e^{k}|_{1}\right]\cdot\|\Delta_{t}e^{k}\|\nonumber\\
=&\frac{Lc_{0}+\sqrt{L}}{2}\cdot|e^{\bar{k}}|_{1}\cdot\|\Delta_{t}e^{k}\|+\frac{Lc_{0}+c_{0}}{3}\cdot|e^{k}|_{1}
\cdot\|\Delta_{t}e^{k}\|,\quad 1\leqslant k\leqslant l.\label{eq69}
\end{align}
Similarly, it is concluded that
\begin{align*}
&\;\psi(V^{k},U^{\bar{k}})_{i}-\psi(v^{k},u^{\bar{k}})_{i}\nonumber\\
=&\;\psi(v^{k},e^{\bar{k}})_{i}+\psi(f^{k},U^{\bar{k}})_{i}\nonumber\\
=&\;\frac{1}{3}\left[v_{i}^{k}\Delta_{x}e_{i}^{\bar{k}}
+\frac{1}{2}\left(\delta_{x}e_{i+\frac{1}{2}}^{\bar{k}}\right)v_{i+1}^{k}
+\frac{1}{2}\left(\delta_{x}e_{i-\frac{1}{2}}^{\bar{k}}\right)v_{i-1}^{k}+e_{i}^{\bar{k}}\Delta_{x}v_{i}^{k}\right]\nonumber\\
&\;+\frac{1}{3}\left[f_{i}^{k}\Delta_{x}U_{i}^{\bar{k}}+\frac{1}{2}\left(\delta_{x}f_{i+\frac{1}{2}}^{k}\right)U_{i+1}^{\bar{k}}
+\frac{1}{2}\left(\delta_{x}f_{i-\frac{1}{2}}^{k}\right)U_{i-1}^{\bar{k}}+f_{i}^{k}\Delta_{x}U_{i}^{\bar{k}}\right].
\end{align*}
Combining \eqref{ass1}, \eqref{ass6} and \eqref{ass7}, we have
\begin{align}
&\left(\psi(V^{k},U^{\bar{k}})-\psi(v^{k},u^{\bar{k}}),\Delta_{t}e^{k}\right)\nonumber\\
=&\frac{h}{3}\sum_{i=1}^{M-1}\left[v_{i}^{k}\Delta_{x}e_{i}^{\bar{k}}
+\frac{1}{2}\left(\delta_{x}e_{i+\frac{1}{2}}^{\bar{k}}\right)v_{i+1}^{k}
+\frac{1}{2}\left(\delta_{x}e_{i-\frac{1}{2}}^{\bar{k}}\right)v_{i-1}^{k}+e_{i}^{\bar{k}}\Delta_{x}v_{i}^{k}\right]
\cdot\Delta_{t}e_{i}^{k}\nonumber\\
&+\frac{h}{3}\sum_{i=1}^{M-1}\left[f_{i}^{k}\Delta_{x}U_{i}^{\bar{k}}+\frac{1}{2}\left(\delta_{x}f_{i+\frac{1}{2}}^{k}\right)U_{i+1}^{\bar{k}}
+\frac{1}{2}\left(\delta_{x}f_{i-\frac{1}{2}}^{k}\right)U_{i-1}^{\bar{k}}+f_{i}^{k}\Delta_{x}U_{i}^{\bar{k}}\right]
\cdot\Delta_{t}e_{i}^{k}\nonumber\\
\leqslant&\frac{1}{3}\left(\|v^{k}\|_{\infty}\cdot|e^{\bar{k}}|_{1}+\frac{1}{2}|e^{\bar{k}}|_{1}\cdot\|v^{k}\|_{\infty}
+\frac{1}{2}|e^{\bar{k}}|_{1}\cdot\|v^{k}\|_{\infty}+\|e^{\bar{k}}\|_{\infty}\cdot|v^{k}|_{1}\right)\cdot\|\Delta_{t}e^{k}\|\nonumber\\
&+\frac{1}{3}\left(\|f^{k}\|\cdot\|\Delta_{x}U^{\bar{k}}\|_{\infty}+\frac{1}{2}|f^{k}|_{1}\cdot\|U^{\bar{k}}\|_{\infty}
+\frac{1}{2}|f^{k}|_{1}\cdot\|U^{\bar{k}}\|_{\infty}+\|f^{k}\|\cdot\|\Delta_{x}U^{\bar{k}}\|_{\infty}\right)\cdot\|\Delta_{t}e^{k}\|\nonumber\\
\leqslant&\frac{1}{3}\left[2\left(\frac{Lc_{0}}{2}+\frac{3\sqrt{L}}{2}\sqrt{\frac{8}{h^{4}}
+2Lc_{3}^{2}h^{6}}\right)
+\frac{\sqrt{L}}{2}\left(\sqrt{L}c_{0}+3\sqrt{\frac{8}{h^{4}}
+2Lc_{3}^{2}h^{6}}\right)\right]\cdot|e^{\bar{k}}|_{1}
\cdot\|\Delta_{t}e^{k}\|\nonumber\\
&+\frac{1}{3}\left(2c_{0}+\frac{2}{h}\cdot c_{0}\right)\cdot\|f^{k}\|\cdot\|\Delta_{t}e^{k}\|\nonumber\\
=&\left(\frac{Lc_{0}}{2}+\frac{3\sqrt{L}}{2}\sqrt{\frac{8}{h^{4}}+2Lc_{3}^{2}h^{6}}\right)\cdot|e^{\bar{k}}|_{1}
\cdot\|\Delta_{t}e^{k}\|+\frac{2}{3}\left(c_{0}+\frac{c_{0}}{h}\right)\cdot\|f^{k}\|\cdot\|\Delta_{t}e^{k}\|,\quad 1\leqslant k\leqslant l.\label{eq69*}
\end{align}
In addition, applying Cauchy-Schwarz inequality, we have
\begin{align}
-(\Delta_{x}e^{\bar{k}},\Delta_{t}e^{k})\leqslant\|\Delta_{x}e^{\bar{k}}\|\cdot\|\Delta_{t}e^{k}\|,\quad 1\leqslant k\leqslant l.\label{eq70}
\end{align}
Moreover, it holds
\begin{align}
&(\Delta_{x}f^{\bar{k}},\Delta_{t}e^{k})\nonumber\\
=\;&\left(\Delta_{x}\left(\delta_{x}^{2}e^{\bar{k}}-\frac{h^{2}}{12}\delta_{x}^{2}f^{\bar{k}}+R^{\bar{k}}\right),
\Delta_{t}e^{k}\right)\nonumber\\
=\;&(\Delta_{x}(\delta_{x}^{2}e^{\bar{k}}),\Delta_{t}e^{k})-\frac{h^{2}}{12}(\Delta_{x}(\delta_{x}^{2}f^{\bar{k}}),
\Delta_{t}e^{k})+(\Delta_{x}R^{\bar{k}},\Delta_{t}e^{k})\nonumber\\
=\;&-(\Delta_{x}(\delta_{x}e^{\bar{k}}),\Delta_{t}(\delta_{x}e^{k}))-\frac{h^{2}}{12}(\Delta_{x}f^{\bar{k}},
\Delta_{t}(\delta_{x}^{2}e^{k}))+(\Delta_{x}R^{\bar{k}},\Delta_{t}e^{k})\nonumber\\
\leqslant\;&|\Delta_{x}e^{\bar{k}}|_{1}\cdot|\Delta_{t}e^{k}|_{1}-\frac{h^{2}}{12}\left(\Delta_{x}f^{\bar{k}},
\Delta_{t}\left(f^{k}+\frac{h^{2}}{12}\delta_{x}^{2}f^{k}-R^{k}\right)\right)+|R^{\bar{k}}|_{1}
\cdot\|\Delta_{t}e^{k}\|\nonumber\\
=\;&|\Delta_{x}e^{\bar{k}}|_{1}\cdot|\Delta_{t}e^{k}|_{1}-\frac{h^{2}}{12}(\Delta_{x}f^{\bar{k}},
\Delta_{t}f^{k})+\frac{h^{4}}{144}(\delta_{x}(\Delta_{x}f^{\bar{k}}),\delta_{x}(\Delta_{t}f^{k}))
+\frac{h^{2}}{12}(\Delta_{x}f^{\bar{k}},\Delta_{t}R^{k})\nonumber\\
&+|R^{\bar{k}}|_{1}\cdot\|\Delta_{t}e^{k}\|\nonumber\\
\leqslant\;&|\Delta_{x}e^{\bar{k}}|_{1}\cdot|\Delta_{t}e^{k}|_{1}+\frac{h^{2}}{12}|f^{\bar{k}}|_{1}\cdot
\|\Delta_{t}f^{k}\|+\frac{h^{4}}{144}|\Delta_{x}f^{\bar{k}}|_{1}\cdot|\Delta_{t}f^{k}|_{1}
+\frac{h^{2}}{12}|f^{\bar{k}}|_{1}\cdot\|\Delta_{t}R^{k}\|\nonumber\\
&+|R^{\bar{k}}|_{1}\cdot\|\Delta_{t}e^{k}\|,\quad 1\leqslant k\leqslant l\label{eq71}
\end{align}
and
\begin{align}
&(f^{\bar{k}},\Delta_{t}e^{k})\nonumber\\
=\;&\left(\delta_{x}^{2}e^{\bar{k}}-\frac{h^{2}}{12}\delta_{x}^{2}f^{\bar{k}}+R^{\bar{k}},\Delta_{t}e^{k}\right)
\nonumber\\
=\;&(\delta_{x}^{2}e^{\bar{k}},\Delta_{t}e^{k})-\frac{h^{2}}{12}(\delta_{x}^{2}f^{\bar{k}},\Delta_{t}e^{k})
+(R^{\bar{k}},\Delta_{t}e^{k})\nonumber\\
=\;&-(\delta_{x}e^{\bar{k}},\Delta_{t}(\delta_{x}e^{k}))-\frac{h^{2}}{12}(f^{\bar{k}},\Delta_{t}(\delta_{x}^{2}e^{k}))
+(R^{\bar{k}},\Delta_{t}e^{k})\nonumber\\
=\;&-\frac{1}{4\tau}(|e^{k+1}|_{1}^{2}-|e^{k-1}|_{1}^{2})-\frac{h^{2}}{12}\left(f^{\bar{k}},
\Delta_{t}\left(f^{k}+\frac{h^{2}}{12}\delta_{x}^{2}f^{k}-R^{k}\right)\right)
+(R^{\bar{k}},\Delta_{t}e^{k})\nonumber\\
=\;&-\frac{1}{4\tau}(|e^{k+1}|_{1}^{2}-|e^{k-1}|_{1}^{2})-\frac{h^{2}}{12}(f^{\bar{k}},
\Delta_{t}f^{k})-\frac{h^{4}}{144}(f^{\bar{k}},\Delta_{t}(\delta_{x}^{2}f^{k}))
+\frac{h^{2}}{12}(f^{\bar{k}},\Delta_{t}R^{k})\nonumber\\
&\;+(R^{\bar{k}},\Delta_{t}e^{k})\nonumber\\
=\;&-\frac{1}{4\tau}(|e^{k+1}|_{1}^{2}-|e^{k-1}|_{1}^{2})-\frac{h^{2}}{12}\cdot\frac{1}{4\tau}(\|f^{k+1}\|^{2}
-\|f^{k-1}\|^{2})+\frac{h^{4}}{144}(\delta_{x}f^{\bar{k}},\Delta_{t}(\delta_{x}f^{k}))\nonumber\\
&\;+\frac{h^{2}}{12}(f^{\bar{k}},\Delta_{t}R^{k})+(R^{\bar{k}},\Delta_{t}e^{k})\nonumber\\
\leqslant\;&-\frac{1}{4\tau}(|e^{k+1}|_{1}^{2}-|e^{k-1}|_{1}^{2})-\frac{h^{2}}{12}\cdot\frac{1}{4\tau}(\|f^{k+1}\|^{2}
-\|f^{k-1}\|^{2})+\frac{h^{4}}{144}\cdot\frac{1}{4\tau}(|f^{k+1}|_{1}^{2}-|f^{k-1}|_{1}^{2})\nonumber\\
&\;+\frac{h^{2}}{12}\|f^{\bar{k}}\|\cdot\|\Delta_{t}R^{k}\|+\|R^{\bar{k}}\|\cdot\|\Delta_{t}e^{k}\|,\quad 1\leqslant k\leqslant l.\label{eq72}
\end{align}
Substituting \eqref{eq67}, \eqref{eq69}--\eqref{eq72} into \eqref{eq66}, we have
\begin{align*}
&\;\|\Delta_{t}e^{k}\|^{2}\nonumber\\
\leqslant&\;\mu\left[-|\Delta_{t}e^{k}|_{1}^{2}-\frac{h^{2}}{12}\|\Delta_{t}f^{k}\|^{2}
+\frac{h^{4}}{144}|\Delta_{t}f^{k}|_{1}^{2}+\frac{h^{2}}{12}\|\Delta_{t}f^{k}\|
\cdot\|\Delta_{t}R^{k}\|+\|\Delta_{t}R^{k}\|\cdot\|\Delta_{t}e^{k}\|\right]\nonumber\\
&\;+\gamma\left[\frac{Lc_{0}+\sqrt{L}}{2}\cdot|e^{\bar{k}}|_{1}\cdot\|\Delta_{t}e^{k}\|+\frac{Lc_{0}+c_{0}}{3}\cdot|e^{k}|_{1}
\cdot\|\Delta_{t}e^{k}\|\right]
+\kappa\|\Delta_{x}e^{\bar{k}}\|\cdot\|\Delta_{t}e^{k}\|\nonumber\\
&\;+\frac{\gamma h^{2}}{2}\left[\left(\frac{Lc_{0}}{2}+\frac{3\sqrt{L}}{2}\sqrt{\frac{8}{h^{4}}
+2Lc_{3}^{2}h^{6}}\right)\cdot|e^{\bar{k}}|_{1}\cdot\|\Delta_{t}e^{k}\|+\frac{2}{3}\left(c_{0}+\frac{c_{0}}{h}\right)\cdot\|f^{k}\|\cdot\|\Delta_{t}e^{k}\|\right]\nonumber\\
&\;+\frac{\kappa h^{2}}{6}
\left[|\Delta_{x}e^{\bar{k}}|_{1}\cdot|\Delta_{t}e^{k}|_{1}
+\frac{h^{2}}{12}|f^{\bar{k}}|_{1}\cdot\|\Delta_{t}f^{k}\|+\frac{h^{4}}{144}|\Delta_{x}f^{\bar{k}}|_{1}\cdot|\Delta_{t}f^{k}|_{1}
+\frac{h^{2}}{12}|f^{\bar{k}}|_{1}\cdot\|\Delta_{t}R^{k}\|\right.\nonumber\\
&\;\left.+|R^{\bar{k}}|_{1}\cdot\|\Delta_{t}e^{k}\|\right]+\nu\left[-\frac{1}{4\tau}(|e^{k+1}|_{1}^{2}-|e^{k-1}|_{1}^{2})
-\frac{h^{2}}{12}\cdot\frac{1}{4\tau}(\|f^{k+1}\|^{2}-\|f^{k-1}\|^{2})\right.\nonumber\\
&\;\left.+\frac{h^{4}}{144}\cdot\frac{1}{4\tau}(|f^{k+1}|_{1}^{2}-|f^{k-1}|_{1}^{2})
+\frac{h^{2}}{12}\|f^{\bar{k}}\|\cdot\|\Delta_{t}R^{k}\|+\|R^{\bar{k}}\|
\cdot\|\Delta_{t}e^{k}\|\right]+\|Q^{k}\|\cdot\|\Delta_{t}e^{k}\|\nonumber\\
\leqslant\;&-\frac{\mu h^{2}}{12}\|\Delta_{t}f^{k}\|^{2}+\frac{\mu h^{2}}{36}
\|\Delta_{t}f^{k}\|^{2}+\frac{\mu h^{2}}{54}\|\Delta_{t}f^{k}\|^{2}
+\frac{3\mu h^{2}}{32}\|\Delta_{t}R^{k}\|^{2}+\frac{1}{10}
\|\Delta_{t}e^{k}\|^{2}+\frac{5\mu^{2}}{2}\|\Delta_{t}R^{k}\|^{2}\nonumber\\
&\;+\frac{1}{10}\|\Delta_{t}e^{k}\|^{2}+\frac{5\gamma^{2}(Lc_{0}+\sqrt{L})^{2}}{8}
|e^{\bar{k}}|_{1}^{2}+\frac{1}{10}\|\Delta_{t}e^{k}\|^{2}+\frac{5\gamma^{2}(Lc_{0}+c_{0})^{2}}{18}
|e^{k}|_{1}^{2}+\frac{1}{10}\|\Delta_{t}e^{k}\|^{2}\nonumber\\
&\;+\frac{5\kappa^{2}}{2}\|\Delta_{x}e^{\bar{k}}\|^{2}+\frac{1}{10}\|\Delta_{t}e^{k}\|^{2}
+\frac{5\gamma^{2}h^{4}}{8}\left(\frac{Lc_{0}}{2}+\frac{3\sqrt{L}}{2}\sqrt{\frac{8}{h^{4}}
+2Lc_{3}^{2}h^{6}}\right)^{2}|e^{\bar{k}}|_{1}^{2}+\frac{1}{10}\|\Delta_{t}e^{k}\|^{2}\nonumber\\
&\;+\frac{5\gamma^{2}h^{2}(hc_{0}+c_{0})^{2}}{18}\|f^{k}\|^{2}
+\frac{1}{10}\cdot\frac{h^{2}}{4}|\Delta_{t}e^{k}|_{1}^{2}
+\frac{5\kappa^{2}h^{2}}{18}|\Delta_{x}e^{\bar{k}}|^{2}
+\frac{\mu h^{2}}{54}\|\Delta_{t}f^{k}\|^{2}+\frac{\kappa^{2}h^{6}}{384\mu}|f^{\bar{k}}|_{1}^{2}\nonumber\\
&\;+\frac{\mu h^{2}}{54}\cdot\frac{h^{2}}{4}|\Delta_{t}f^{k}|_{1}^{2}
+\left(\frac{\kappa h^{6}}{864}\right)^{2}\cdot\frac{54}{\mu h^{4}}
|\Delta_{x}f^{\bar{k}}|_{1}^{2}+\left(\frac{\kappa h^{4}}{72}\right)^{2}|f^{\bar{k}}|^{2}+\frac{1}{4}\|\Delta_{t}R^{k}\|^{2}
+\frac{1}{10}\|\Delta_{t}e^{k}\|^{2}\nonumber\\
&\;+\frac{5\kappa^{2}h^{4}}{72}|R^{\bar{k}}|_{1}^{2}
-\frac{\nu}{4\tau}\left[(|e^{k+1}|_{1}^{2}-|e^{k-1}|_{1}^{2})
+\frac{h^{2}}{12}(\|f^{k+1}\|^{2}-\|f^{k-1}\|^{2})
-\frac{ h^{4}}{144}(|f^{k+1}|_{1}^{2}-|f^{k-1}|_{1}^{2})\right]\nonumber\\
&\;+\frac{\nu^{2}h^{4}}{144}\|f^{\bar{k}}\|^{2}+\frac{1}{4}\|\Delta_{t}R^{k}\|^{2}
+\frac{1}{10}\|\Delta_{t}e^{k}\|^{2}+\frac{5\nu^{2}}{2}\|R^{\bar{k}}\|^{2}
+\frac{1}{10}\|\Delta_{t}e^{k}\|^{2}+\frac{5}{2}\|Q^{k}\|^{2}\nonumber\\
\leqslant\;&\|\Delta_{t}e^{k}\|^{2}-\frac{\nu}{4\tau}\left[(|e^{k+1}|_{1}^{2}-|e^{k-1}|_{1}^{2})
+\frac{h^{2}}{12}(\|f^{k+1}\|^{2}-\|f^{k-1}\|^{2})
-\frac{h^{4}}{144}(|f^{k+1}|_{1}^{2}-|f^{k-1}|_{1}^{2})\right]\nonumber\\
&+\left[\frac{5\gamma^{2}(Lc_{0}+\sqrt{L})^{2}}{8}
+\frac{5\gamma^{2}}{8}\left(\frac{Lc_{0}h^{2}}{2}+\frac{3\sqrt{L}}{2}\sqrt{8
+2Lc_{3}^{2}h^{10}}\right)^{2}+\frac{5\kappa^{2}}{2}+\frac{10\kappa^{2}}{9}\right]|e^{\bar{k}}|_{1}^{2}\nonumber\\
&\;+\frac{5\gamma^{2}(Lc_{0}+c_{0})^{2}}{18}|e^{k}|_{1}^{2}
+\frac{5\gamma^{2}h^{2}(hc_{0}+c_{0})^{2}}{18}\|f^{k}\|^{2}+\left(\frac{\kappa^{2}h^{4}}{96\mu}
+\frac{\kappa^{2}h^{4}}{864\mu}+\frac{\kappa^{2}h^{6}}{1296}+\frac{\nu^{2}h^{4}}{144}\right)\|f^{\bar{k}}\|^{2}\nonumber\\
&\;+\left(\frac{3\mu h^{2}}{32}+\frac{5\mu^{2}}{2}+\frac{1}{2}\right)\|\Delta_{t}R^{k}\|^{2}
+\left(\frac{5\kappa^{2}h^{2}}{18}+\frac{5\nu^{2}}{2}\right)\|R^{\bar{k}}\|^{2}+\frac{5}{2}\|Q^{k}\|^{2}.
\end{align*}
Simplifying the formula, we have
\begin{align*}
&\frac{\nu}{2\tau}\left[\frac{1}{2}(|e^{k+1}|_{1}^{2}+|e^{k}|_{1}^{2})
+\frac{h^{2}}{24}(\|f^{k+1}\|^{2}+\|f^{k}\|^{2})
-\frac{h^{4}}{288}(|f^{k+1}|_{1}^{2}+|f^{k}|_{1}^{2})\right]\\
&-\frac{\nu}{2\tau}\left[\frac{1}{2}(|e^{k}|_{1}^{2}+|e^{k-1}|_{1}^{2})
+\frac{h^{2}}{24}(\|f^{k}\|^{2}+\|f^{k-1}\|^{2})
-\frac{h^{4}}{288}(|f^{k}|_{1}^{2}+|f^{k-1}|_{1}^{2})\right]\\
\leqslant\;&\left[\frac{5\gamma^{2}(Lc_{0}+\sqrt{L})^{2}}{8}
+\frac{5\gamma^{2}}{8}\left(\frac{Lc_{0}h^{2}}{2}+\frac{3\sqrt{L}}{2}\sqrt{8
+2Lc_{3}^{2}h^{10}}\right)^{2}+\frac{5\kappa^{2}}{2}+\frac{10\kappa^{2}}{9}\right]|e^{\bar{k}}|_{1}^{2}\\
&\;+\frac{5\gamma^{2}(Lc_{0}+c_{0})^{2}}{18}|e^{k}|_{1}^{2}
+\frac{5\gamma^{2}h^{2}(hc_{0}+c_{0})^{2}}{18}\|f^{k}\|^{2}+\left(\frac{\kappa^{2}h^{4}}{96\mu}
+\frac{\kappa^{2}h^{4}}{864\mu}+\frac{\kappa^{2}h^{6}}{1296}+\frac{\nu^{2}h^{4}}{144}\right)\|f^{\bar{k}}\|^{2}\\
&\;+\left(\frac{3\mu h^{2}}{32}+\frac{5\mu^{2}}{2}+\frac{1}{2}\right)\|\Delta_{t}R^{k}\|^{2}
+\left(\frac{5\kappa^{2}h^{2}}{18}+\frac{5\nu^{2}}{2}\right)\|R^{\bar{k}}\|^{2}+\frac{5}{2}\|Q^{k}\|^{2}\\
\leqslant\;&\left[\frac{5\gamma^{2}(Lc_{0}+\sqrt{L})^{2}}{8}
+\frac{5\gamma^{2}}{8}\left(\frac{Lc_{0}h^{2}}{2}+\frac{3\sqrt{L}}{2}\sqrt{8
+2Lc_{3}^{2}h^{10}}\right)^{2}+\frac{5\kappa^{2}}{2}+\frac{10\kappa^{2}}{9}\right]\frac{|e^{k+1}|_{1}^{2}+|e^{k-1}|_{1}^{2}}{2}\\
&\;+\frac{5\gamma^{2}(Lc_{0}+c_{0})^{2}}{18}|e^{k}|_{1}^{2}
+\frac{5\gamma^{2}h^{2}(hc_{0}+c_{0})^{2}}{18}\|f^{k}\|^{2}+\left(\frac{\kappa^{2}h^{4}}{96\mu}
+\frac{\kappa^{2}h^{4}}{864\mu}+\frac{\kappa^{2}h^{6}}{1296}+\frac{\nu^{2}h^{4}}{144}\right)\\
&\;\times\frac{\|f^{k+1}\|^{2}+\|f^{k-1}\|^{2}}{2}+\left(\frac{3\mu h^{2}}{32}+\frac{5\mu^{2}}{2}+\frac{1}{2}\right)\|\Delta_{t}R^{k}\|^{2}
+\left(\frac{5\kappa^{2}h^{2}}{18}+\frac{5\nu^{2}}{2}\right)\|R^{\bar{k}}\|^{2}+\frac{5}{2}\|Q^{k}\|^{2}\\
\leqslant\;&\left[\frac{5\gamma^{2}(Lc_{0}+\sqrt{L})^{2}}{8}
+\frac{5\gamma^{2}}{8}\left(\frac{Lc_{0}h^{2}}{2}+\frac{3\sqrt{L}}{2}\sqrt{8
+2Lc_{3}^{2}h^{10}}\right)^{2}+\frac{5\kappa^{2}}{2}+\frac{10\kappa^{2}}{9}+\frac{5\gamma^{2}(Lc_{0}+c_{0})^{2}}{18}\right]\\
&\;\times\left(\frac{|e^{k+1}|_{1}^{2}+|e^{k}|_{1}^{2}}{2}+\frac{|e^{k+1}|_{1}^{2}+|e^{k}|_{1}^{2}}{2}\right)
+\left[5\gamma^{2}(hc_{0}+c_{0})^{2}+\frac{3\kappa^{2}h^{2}}{16\mu}
+\frac{\kappa^{2}h^{2}}{48\mu}+\frac{\kappa^{2}h^{4}}{72}+\frac{\nu^{2}h^{2}}{8}\right]\\
&\;\times\left[\frac{h^{2}}{36}\left(\|f^{k+1}\|^{2}+\|f^{k}\|^{2}\right)
+\frac{h^{2}}{36}\left(\|f^{k}\|^{2}+\|f^{k-1}\|^{2}\right)\right]+\left[\left(\frac{3\mu h^{2}}{32}+\frac{5\mu^{2}}{2}+\frac{1}{2}+\frac{5\kappa^{2}h^{2}}{18}+\frac{5\nu^{2}}{2}\right)\right.\\
&\;\left.\times Lc_{3}^{2}+\frac{5Lc_{2}^{2}}{2}\right](\tau^{2}+h^{4})^{2},
\end{align*}
when $h\leqslant h_{0}$, $\tau\leqslant \tau_{0}$, we have
\begin{align}
&\frac{\nu}{2\tau}\left[\frac{1}{2}(|e^{k+1}|_{1}^{2}+|e^{k}|_{1}^{2})
+\frac{h^{2}}{24}(\|f^{k+1}\|^{2}+\|f^{k}\|^{2})
-\frac{h^{4}}{288}(|f^{k+1}|_{1}^{2}+|f^{k}|_{1}^{2})\right]\nonumber\\
&-\frac{\nu}{2\tau}\left[\frac{1}{2}(|e^{k}|_{1}^{2}+|e^{k-1}|_{1}^{2})
+\frac{h^{2}}{24}(\|f^{k}\|^{2}+\|f^{k-1}\|^{2})
-\frac{h^{4}}{288}(|f^{k}|_{1}^{2}+|f^{k-1}|_{1}^{2})\right]\nonumber\\
\leqslant\;&c_{6}\left(\frac{|e^{k+1}|_{1}^{2}+|e^{k}|_{1}^{2}}{2}+\frac{|e^{k}|_{1}^{2}+|e^{k-1}|_{1}^{2}}{2}\right)
+c_{7}\left[\frac{h^{2}}{36}(\|f^{k+1}\|^{2}+\|f^{k}\|^{2})+\frac{h^{2}}{36}(\|f^{k}\|^{2}+\|f^{k-1}\|^{2})\right]\nonumber\\
&+c_{8}(\tau^{2}+h^{4})^{2}\nonumber\\
\leqslant\;&c_{9}\left[\frac{|e^{k+1}|_{1}^{2}+|e^{k}|_{1}^{2}}{2}
+\frac{h^{2}}{36}(\|f^{k+1}\|^{2}+\|f^{k}\|^{2})\right]+c_{9}\left[\frac{|e^{k}|_{1}^{2}+|e^{k-1}|_{1}^{2}}{2}
+\frac{h^{2}}{36}(\|f^{k}\|^{2}+\|f^{k-1}\|^{2})\right]\nonumber\\
&+c_{9}(\tau^{2}+h^{4})^{2},\quad 1\leqslant k\leqslant l.\label{eq74}
\end{align}
Thanks to
\begin{align*}
\frac{h^{2}}{24}(\|f^{k+1}\|^{2}+\|f^{k}\|^{2})-\frac{h^{4}}{288}(|f^{k+1}|_{1}^{2}+|f^{k}|_{1}^{2})
\geqslant\;&\left(\frac{h^{2}}{24}-\frac{4}{h^{2}}\cdot\frac{h^{4}}{288}\right)(\|f^{k+1}\|^{2}+\|f^{k}\|^{2})\\
=\;&\frac{h^{2}}{36}(\|f^{k+1}\|^{2}+\|f^{k}\|^{2}),
\end{align*}
we have
\begin{align}
\frac{1}{2}(|e^{k}|_{1}^{2}+|e^{k-1}|_{1}^{2})+\frac{h^{2}}{36}(\|f^{k}\|^{2}+\|f^{k-1}\|^{2})\leqslant F^{k},\quad 1\leqslant k\leqslant N.\label{eq75}
\end{align}
Combining \eqref{eq74} with \eqref{eq75}, we have
\begin{align*}
\frac{\nu}{2\tau}(F^{k+1}-F^{k})\leqslant c_{9}(F^{k}+F^{k+1})+c_{9}(\tau^{2}+h^{4})^{2},\quad 1\leqslant k\leqslant l.
\end{align*}
According to the Gronwall inequality, when $2c_{9}\tau/\nu\leqslant 1/3$, we have
\begin{align*}
F^{k+1}\leqslant \exp\left(\frac{6Tc_{9}}{\nu}\right)\cdot\left[F^{1}+\frac{1}{2}(\tau^{2}+h^{4})^{2}\right],
\quad 1\leqslant k\leqslant l.
\end{align*}
From \eqref{eqF1}, when $h\leqslant h_{0}$, we have
\begin{align}
F^{k+1}\leqslant c_{10}(\tau^{2}+h^{4})^{2},\quad 1\leqslant k\leqslant l.\label{eq79}
\end{align}
A combination of \eqref{F}, \eqref{eq75} and \eqref{eq79}, we have
\begin{align*}
|e^{k+1}|_{1}\leqslant\sqrt{2F^{k+1}}\leqslant \sqrt{2c_{10}}(\tau^{2}+h^{4})\leqslant c_{4}(\tau^{2}+h^{4}),\quad 1\leqslant k\leqslant l.
\end{align*}
By the mathematical induction, we have
\begin{align}
|e^{k+1}|_{1}\leqslant c_{4}(\tau^{2}+h^{4}),\quad 1\leqslant k\leqslant N-1.\label{eq81}
\end{align}
Combining \eqref{Err1} with \eqref{eq81}, we have
\begin{align*}
|e^{k}|_{1}\leqslant c_{4}(\tau^{2}+h^{4}),\quad 0\leqslant k\leqslant N.
\end{align*}
This completes the proof.
\end{proof}
\begin{remark}
  \begin{align*}
     \|e^{k}\|_{\infty} \leqslant \frac{\sqrt{L}}{2} |e^{k}|_{1}\leqslant\frac{c_{4}\sqrt{L}}{2}(\tau^{2}+h^{4}),\quad 0\leqslant k\leqslant N.
  \end{align*}
\end{remark}
\subsection{Stability}
In the below, we will discuss the stability of the difference scheme \eqref{eq17}--\eqref{eq20a}.
 \begin{theorem}[Stability]
 Suppose $\{u_i^k, \ v_i^k\,|\, 1 \leqslant i \leqslant M,\; 0 \leqslant k \leqslant N\}$ is the solution of \eqref{eq17}--\eqref{eq20a}, $\{\hat{u}_i^k, \ \hat{v}_i^k\,|\, 1 \leqslant i \leqslant M,\; 0 \leqslant k \leqslant N\}$ is the solution of
\begin{align}
&\delta_{t}\hat{u}_{i}^{\frac{1}{2}}-\mu\delta_{t}\hat{v}_{i}^{\frac{1}{2}}+\gamma\left[\psi(\hat{u}^{0},\hat{u}^{\frac{1}{2}})_{i}
-\frac{h^{2}}{2}\psi(\hat{v}^{0},\hat{u}^{\frac{1}{2}})_{i}\right]+\kappa\left(\Delta_{x}\hat{u}_{i}^{\frac{1}{2}}
-\frac{h^{2}}{6}\Delta_{x}\hat{v}_{i}^{\frac{1}{2}}\right)-\nu \hat{v}_{i}^{\frac{1}{2}}=0,\nonumber\\
&\quad \qquad\qquad\qquad\qquad\qquad\qquad 1\leqslant i\leqslant M,\label{eq17bb}\\
&\Delta_{t}\hat{u}_{i}^{k}-\mu\Delta_{t}\hat{v}_{i}^{k}+\gamma\left[\psi(\hat{u}^{k},\hat{u}^{\bar{k}})_{i}
-\frac{h^{2}}{2}\psi(\hat{v}^{k},\hat{u}^{\bar{k}})_{i}\right]+\kappa\left(\Delta_{x}\hat{u}_{i}^{\bar{k}}
-\frac{h^{2}}{6}\Delta_{x}\hat{v}_{i}^{\bar{k}}\right)-\nu \hat{v}_{i}^{\bar{k}}=0,\nonumber\\ &\qquad\qquad\qquad\qquad\qquad\qquad\quad 1\leqslant i\leqslant M,\; 1\leqslant k\leqslant N-1,\label{eq18bb}\\
&\hat{v}_{i}^{k}=\delta_{x}^{2}\hat{u}_{i}^{k}-\frac{h^{2}}{12}\delta_{x}^{2}\hat{v}_{i}^{k},\quad 1\leqslant i\leqslant M,\; 0\leqslant k\leqslant N,\label{eq19bb}\\
&\hat{u}_{i}^{0}=\varphi(x_{i})+\phi^0(x_i),\quad 1\leqslant i\leqslant M,\label{eq20bb}\\
&\hat{u}_i^k = \hat{u}_{i+M}^k,\quad \hat{v}_i^k = \hat{v}_{i+M}^k,\quad 1\leqslant i\leqslant M,\; 0\leqslant k\leqslant N.\label{eq21bb}
\end{align}
Denote
\begin{align*}
\eta_i^k= \hat{u}_i^k-u_i^k,\quad \xi_i^k = \hat{v}_i^k-v_i^k,\quad 1\leqslant i\leqslant M,\;0\leqslant k\leqslant N,
\end{align*}
 then there exist positive constants $h_{0}$, $\tau_{0}$, such that when $h\leqslant h_{0}$, $\tau\leqslant\tau_{0}$,
 $c_{12}\tau\leqslant 1/4$,
 we have
 \begin{align*}
 |\eta^{k}|_{1} \leqslant c_{11}|\phi^0|_1,\quad 0\leqslant k\leqslant N,
 \end{align*}
 where
 \begin{align*}
 c_{11}=\max\left\{\sqrt{\frac{8c_{13}}{3}},\sqrt{2c_{17}}\right\},
 \end{align*}
with
\begin{align*}
c_{12}=&\;\frac{15\gamma^{2} L}{4\nu}(|\varphi|_{1}+|\phi^{0}|_{1})
^{2}+\frac{13\kappa^{2}}{6\nu}+\frac{5\kappa^{2}h_{0}^{2}}{36\mu\nu},\\
c_{13}=&\;\frac{7}{4}+\frac{15\gamma^{2}\tau_{0} L}{4\nu}(1+\sqrt{L}c_{0})^{2},\\
c_{14}=&\;\frac{15\gamma^{2}L}{4}(2+\sqrt{L}c_{0})^{2}+\frac{13\kappa^{2}}{6}
+\frac{3\gamma^{2}L}{8}(1+\sqrt{L}c_{0})^{2},\\
c_{15}=&\;\frac{27\gamma^{2}L}{4}(1+\sqrt{L}c_{0})^{2}+\frac{5\kappa^{2}h_{0}^{2}}{36\mu},\\
c_{16}=&\;\max\{c_{14},c_{15}\},\\
c_{17}=&\;\exp\left(\frac{6Tc_{16}}{\nu}\right)\cdot\frac{4c_{13}}{3}.
\end{align*}
  \end{theorem}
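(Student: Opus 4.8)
The plan is to follow the proof of Theorem~\ref{thm6.1} almost verbatim, the truncation errors $Q_i^k,R_i^k$ now being identically zero and the r\^ole of the ``source'' being played by the initial perturbation $\phi^0$. First I would subtract \eqref{eq17bb}--\eqref{eq21bb} from \eqref{eq17}--\eqref{eq20a} to obtain a perturbation system for $\{\eta_i^k,\xi_i^k\}$ having exactly the shape of the error system \eqref{eq48}--\eqref{eq51a}, but with zero right-hand sides, with $\eta_i^0=\phi^0(x_i)$, and with the clean compact relation $\xi_i^k=\delta_x^2\eta_i^k-\tfrac{h^2}{12}\delta_x^2\xi_i^k$ (no remainder, since \eqref{eq19} and \eqref{eq19bb} hold exactly). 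The nonlinear differences I would expand bilinearly with the help of Lemma~\ref{lemma5}: writing $\hat u=u+\eta$, $\hat v=v+\xi$,
\[
\psi(\hat u^k,\hat u^{\bar k})_i-\psi(u^k,u^{\bar k})_i=\psi(u^k,\eta^{\bar k})_i+\psi(\eta^k,\hat u^{\bar k})_i,\qquad
\psi(\hat v^k,\hat u^{\bar k})_i-\psi(v^k,u^{\bar k})_i=\psi(v^k,\eta^{\bar k})_i+\psi(\xi^k,\hat u^{\bar k})_i,
\]
so that in every nonlinear term one factor is a perturbation and the other a numerical quantity whose discrete norms are controlled a priori. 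Those bounds come from the boundedness of Sect.~\ref{Sec:4} and from Theorem~\ref{thm6.1}: $|u^k|_1\le\sqrt L c_0+1$, $\|u^k\|_\infty\le\tfrac{\sqrt L}{2}(\sqrt L c_0+1)$, and — once the induction hypothesis $|\eta^k|_1\le c_{11}|\phi^0|_1\le1$ is in force — $|\hat u^k|_1\le\sqrt L c_0+2$; likewise, treating the compact relation as in the derivation of \eqref{fk} but without the $R$-term yields $\|\xi^k\|^2\le\tfrac{18}{h^2}|\eta^k|_1^2$ and hence control of $\|\xi^k\|,\ |\xi^k|_1$ by $|\eta^k|_1$. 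Finally I would let $G^k$ be the analogue of \eqref{F} with $e,f$ replaced by $\eta,\xi$.

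For the first level I would pair the first perturbation equation with $\delta_t\eta^{1/2}$ and use Lemma~\ref{lemma4} with $S=0$ to rewrite the $\delta_t\xi^{1/2}$, $\Delta_x\xi^{1/2}$ and $\xi^{1/2}$ terms; this produces the favourable negative contribution $-(\mu+\tfrac{\nu\tau}{2})|\eta^1|_1^2$ together with the dissipation $\nu\tau\big(|\eta^{1/2}|_1^2+\tfrac{h^2}{12}\|\xi^{1/2}\|^2-\tfrac{h^4}{144}|\xi^{1/2}|_1^2\big)$. The novelty relative to the convergence analysis is that, because $\eta^0=\phi^0\neq0$, the pieces $(\psi(u^0,\eta^{1/2}),\delta_t\eta^{1/2})$ and $(\Delta_x\eta^{1/2},\delta_t\eta^{1/2})$ no longer vanish outright; I would use Lemma~\ref{lemma2}, the skew-symmetry of $\Delta_x$ and of $v\mapsto(\psi(u^0,v),\cdot)$, and the symmetry of the compact operator $\eta\mapsto\xi$ (which yields $|(\xi^0,\eta^1)|\le\tfrac52|\eta^0|_1|\eta^1|_1$, \emph{without} any inverse-estimate loss of powers of $h$), together with Lemma~\ref{lemma5}, to reduce all such terms to cross-terms in $\eta^0,\eta^1$, and then absorb them by Cauchy--Schwarz and Young's inequality using $\|\xi^0\|\le\tfrac{\sqrt{18}}{h}|\eta^0|_1$ and $\|\eta^0\|\le\tfrac{L}{\sqrt6}|\eta^0|_1$. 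The smallness condition $c_{12}\tau\le1/4$ is precisely what is needed to absorb those $\eta^0$-generated terms into the $\mu|\eta^1|_1^2$ and dissipative contributions. Collecting terms and imposing $h\le h_0,\ \tau\le\tau_0$ I expect to reach $G^1\le\tfrac{4c_{13}}{3}|\phi^0|_1^2$, whence, via $\tfrac12|\eta^1|_1^2\le G^1$ exactly as in \eqref{eqF1}, $|\eta^1|_1\le c_{11}|\phi^0|_1$.

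For the general level I would argue by induction on $l$: assuming $|\eta^j|_1\le c_{11}|\phi^0|_1$ for $1\le j\le l$ with $l\le N-1$ (so that the a priori bounds above hold for $1\le j\le l$), take the inner product of the second perturbation equation with $\Delta_t\eta^k$, $1\le k\le l$, exactly as in \eqref{eq66}. Lemma~\ref{lemma4} with $S=0$ rewrites the $\Delta_t\xi^k$, $\Delta_x\xi^{\bar k}$ and $\xi^{\bar k}$ terms, the last producing the telescoping quantity $-\tfrac{\nu}{4\tau}\big[(|\eta^{k+1}|_1^2-|\eta^{k-1}|_1^2)+\tfrac{h^2}{12}(\|\xi^{k+1}\|^2-\|\xi^{k-1}\|^2)-\tfrac{h^4}{144}(|\xi^{k+1}|_1^2-|\xi^{k-1}|_1^2)\big]$; the nonlinear and $\Delta_x$-terms I would estimate by Cauchy--Schwarz, Young's inequality and Lemma~\ref{lemma5}, absorbing $\|\Delta_t\eta^k\|^2$ into the left. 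The key structural difference from Theorem~\ref{thm6.1} is that \emph{no} additive $(\tau^2+h^4)^2$ term is generated — the discrete equations are satisfied exactly — so after using $\tfrac{h^2}{12}\|\xi\|^2-\tfrac{h^4}{144}|\xi|_1^2\ge\tfrac{h^2}{18}\|\xi\|^2$ (hence $\tfrac12(|\eta^k|_1^2+|\eta^{k-1}|_1^2)+\tfrac{h^2}{36}(\|\xi^k\|^2+\|\xi^{k-1}\|^2)\le G^k$, the analogue of \eqref{eq75}) the inequality becomes $\tfrac{\nu}{2\tau}(G^{k+1}-G^k)\le c_{16}(G^k+G^{k+1})$ for $1\le k\le l$. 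The discrete Gronwall inequality, valid once $2c_{16}\tau/\nu\le1/3$ (absorbed into $\tau\le\tau_0$), then gives $G^{k+1}\le\exp(6Tc_{16}/\nu)\,G^1\le c_{17}|\phi^0|_1^2$, so $|\eta^{k+1}|_1\le\sqrt{2G^{k+1}}\le\sqrt{2c_{17}}|\phi^0|_1\le c_{11}|\phi^0|_1$, closing the induction; combined with the first-level bound this yields $|\eta^k|_1\le c_{11}|\phi^0|_1$ for $0\le k\le N$.

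I expect the main obstacle to be, as in the convergence theorem, the delicate bookkeeping of the strongly nonlinear $\psi$-terms under the compact operator: each difference must be reorganised through Lemma~\ref{lemma5} into pieces carrying a factor $\delta_x\eta^{\bar k}$ (or $\delta_x\eta^k$, $\delta_x\xi^k$) against a bounded numerical quantity, and then estimated so that the $h$-negative constants coming from the inverse inequalities $\|\xi^k\|\le\tfrac{\sqrt{18}}{h}|\eta^k|_1$ and $|\xi^k|_1\le\tfrac2h\|\xi^k\|$ are exactly compensated by the accompanying $h^2$ and $h^4$ prefactors, leaving $h$-uniform constants $c_{14},c_{15},c_{16},c_{17}$. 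The extra wrinkle specific to stability is that now $\eta^0=\phi^0\neq0$, so the several terms that vanished identically in the convergence proof because $e^0=0$ must instead be controlled through the antisymmetry and symmetry properties of $\Delta_x$, $\psi$ and the compact operator $\eta\mapsto\xi$, and it is this step that forces the smallness condition $c_{12}\tau\le1/4$.
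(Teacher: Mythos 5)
Your proposal is correct and follows essentially the same route as the paper: the same perturbation system for $(\eta,\xi)$, the same energy functional $G^k$, Lemmas \ref{lemma4}--\ref{lemma5} with $S=0$, the a priori bounds inherited from Theorem \ref{thm6.1} together with the induction hypothesis $c_{11}|\phi^0|_1\leqslant 1$, and the induction plus discrete Gronwall argument under the same smallness conditions ($c_{12}\tau\leqslant 1/4$ at the first level, $2c_{16}\tau/\nu\leqslant 1/3$ afterwards). The only deviations are bookkeeping ones --- the paper sharpens the compact-relation bound to $\|\xi^k\|^2\leqslant \frac{9}{h^2}|\eta^k|_1^2$ (rather than $\frac{18}{h^2}$) and treats the first level via the exact telescoping identity for $(\xi^{\frac{1}{2}},\delta_t\eta^{\frac{1}{2}})$ rather than via cross-terms $(\xi^0,\eta^1)$ --- which affect only the explicit constants, not the structure of the argument.
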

 Subtracting \eqref{eq17}--\eqref{eq20a} from \eqref{eq17bb}--\eqref{eq21bb}, we have
 \begin{align}
&\delta_{t}\eta_{i}^{\frac{1}{2}}-\mu\delta_{t}\xi_{i}^{\frac{1}{2}}+\gamma\left[\psi(\hat{u}^{0},\hat{u}^{\frac{1}{2}})_{i}
-\frac{h^{2}}{2}\psi(\hat{v}^{0},\hat{u}^{\frac{1}{2}})_{i}\right]-\gamma\left[\psi(u^{0},u^{\frac{1}{2}})_{i}
-\frac{h^{2}}{2}\psi(v^{0},u^{\frac{1}{2}})_{i}\right]\nonumber\\
&  +\kappa\left(\Delta_{x}\eta_{i}^{\frac{1}{2}}
-\frac{h^{2}}{6}\Delta_{x}\xi_{i}^{\frac{1}{2}}\right)-\nu \xi_{i}^{\frac{1}{2}}=0,\quad 1\leqslant i\leqslant M,\label{eq17bbb}\\
&\Delta_{t}\eta_{i}^{k}-\mu\Delta_{t}\xi_{i}^{k}+\gamma\left[\psi(\hat{u}^{k},\hat{u}^{\bar{k}})_{i}
-\frac{h^{2}}{2}\psi(\hat{v}^{k},\hat{u}^{\bar{k}})_{i}\right]-\gamma\left[\psi(u^{k},u^{\bar{k}})_{i}
-\frac{h^{2}}{2}\psi(v^{k},u^{\bar{k}})_{i}\right]\nonumber\\
 & +\kappa\left(\Delta_{x}\eta_{i}^{\bar{k}}
-\frac{h^{2}}{6}\Delta_{x}\xi_{i}^{\bar{k}}\right)-\nu \xi_{i}^{\bar{k}}=0,\quad 1\leqslant i\leqslant M,\; 1\leqslant k\leqslant N-1,\label{eq18bbb}\\
&\xi_{i}^{k}=\delta_{x}^{2}\eta_{i}^{k}-\frac{h^{2}}{12}\delta_{x}^{2}\xi_{i}^{k},\quad 1\leqslant i\leqslant M,\; 0\leqslant k\leqslant N,\label{eq19bbb}\\
&\eta_{i}^{0}=\phi^0(x_i),\quad 1\leqslant i\leqslant M,\label{eq20bbb}\\
&\eta_i^k = \eta_{i+M}^k,\quad \xi_i^k = \xi_{i+M}^k,\quad 1\leqslant i\leqslant M,\;0\leqslant k\leqslant N.\label{eq21bbb}
\end{align}
Similar to the proof of Theorem \ref{thm6.1}, we can obtain the stability with respect to the initial value.
  \begin{proof}
  Denote
\begin{align}
G^{k}=\frac{1}{2}\left[(|\eta^{k}|_{1}^{2}+|\eta^{k-1}|_{1}^{2})+\frac{h^{2}}{12}
(\|\xi^{k}\|^{2}+\|\xi^{k-1}\|^{2})-\frac{h^{4}}{144}(|\xi^{k}|_{1}^{2}+|\xi^{k-1}|
_{1}^{2})\right],\quad 1\leqslant k\leqslant N.\label{G}
\end{align}
Taking the inner product of \eqref{eq19bbb} with $\xi^{k}$, we have
\begin{align*}
\|\xi^{k}\|^{2}=&\;(\delta_{x}^{2}\eta^{k},\xi^{k})-\frac{h^{2}}{12}(\delta_{x}^{2}
\xi^{k},\xi^{k})\nonumber\\
\leqslant&\;\|\delta_{x}^{2}\eta^{k}\|\cdot\|\xi^{k}\|+\frac{h^{2}}{12}|\xi^{k}|_{1}^{2}\nonumber\\
\leqslant&\;\frac{3}{4}\|\delta_{x}^{2}\eta^{k}\|^{2}+\frac{1}{3}\|\xi^{k}\|^{2}
+\frac{1}{3}\|\xi^{k}\|^{2}\nonumber\\
\leqslant&\;\frac{3}{h^{2}}|\eta^{k}|_{1}^{2}+\frac{2}{3}\|\xi^{k}\|^{2},\quad 0\leqslant k\leqslant N.
\end{align*}
Thus, we have
\begin{align}
\|\xi^{k}\|^{2}\leqslant\frac{9}{h^{2}}|\eta^{k}|_{1}^{2},\quad 0\leqslant k\leqslant N.\label{xi}
\end{align}
Similarly, we have
\begin{align}
\|\hat{v}^{k}\|^{2}\leqslant\frac{9}{h^{2}}|\hat{u}^{k}|_{1}^{2},\quad 0\leqslant k\leqslant N.\label{v}
\end{align}
On the basis of Theorem $\ref{thm6.1}$ and \eqref{ass1}, when $h\leqslant h_{0}$, $\tau\leqslant\tau_{0}$, we have
\begin{align}
&|u^{k}|_{1}\leqslant|e^{k}|_{1}+|U^{k}|_{1}\leqslant 1+\sqrt{L}c_{0},\quad 0\leqslant k\leqslant N,\label{eq82}\\
&\|u^{k}\|_{\infty}\leqslant\frac{\sqrt{L}}{2}|u^{k}|_{1}\leqslant\frac{\sqrt{L}}{2}
(1+\sqrt{L}c_{0}),\quad 0\leqslant k\leqslant N,\label{eq83}\\
&\|\Delta_{x}u^{k}\|_{\infty}\leqslant\frac{\sqrt{L}}{2}|\Delta_{x}u^{k}|_{1}
\leqslant\frac{\sqrt{L}}{h}|u^{k}|_{1}\leqslant\frac{\sqrt{L}}{h}(1+\sqrt{L}c_{0}),
\quad 0\leqslant k\leqslant N.\label{eq84}
\end{align}
With the help of \eqref{eq20bb} and \eqref{v}, we have
\begin{align}
&|\hat{u}^{0}|_{1}=|\varphi+\psi^{0}|_{1}\leqslant|\varphi|_{1}+|\phi^{0}|_{1},\label{eq85}\\
&\|\hat{u}^{0}\|_{\infty}\leqslant\frac{\sqrt{L}}{2}|\hat{u}^{0}|_{1}\leqslant
\frac{\sqrt{L}}{2}(|\varphi|_{1}
+|\phi^{0}|_{1}),\label{eq86}\\
&|\hat{v}^{0}|_{1}\leqslant\frac{2}{h}\|\hat{v}^{0}\|\leqslant\frac{2}{h}\cdot
\frac{3}{h}|\hat{u}^{0}|_{1}\leqslant\frac{6}{h^{2}}(|\varphi|_{1}+|\phi^{0}|_{1}),\label{eq87}\\
&\|\hat{v}^{0}\|_{\infty}\leqslant\frac{\sqrt{L}}{2}|\hat{v}^{0}|_{1}\leqslant
\frac{3\sqrt{L}}{h^{2}}(|\varphi|_{1}+|\phi^{0}|_{1}).\label{eq88}
\end{align}
Taking the inner product of \eqref{eq17bbb} with $\delta_{t}\eta^{\frac{1}{2}}$, we have
\begin{align}
&\|\delta_{t}\eta^{\frac{1}{2}}\|^{2}-\mu(\delta_{t}\xi^{\frac{1}{2}},
\delta_{t}\eta^{\frac{1}{2}})+\gamma(\psi(\hat{u}^{0},\hat{u}^\frac{1}{2})
-\psi(u^{0},u^\frac{1}{2}),\delta_{t}\eta^{\frac{1}{2}})
-\frac{\gamma h^{2}}{2}(\psi(\hat{v}^{0},\hat{u}^\frac{1}{2})
-\psi(v^{0},u^\frac{1}{2}),\delta_{t}\eta^{\frac{1}{2}})\nonumber\\
&+\kappa(\Delta_{x}\eta^{\frac{1}{2}},\delta_{t}\eta^{\frac{1}{2}})-\frac{\kappa h^{2}}{6}(\Delta_{x}\xi^{\frac{1}{2}},\delta_{t}\eta^{\frac{1}{2}})
-\nu(\xi^{\frac{1}{2}},\delta_{t}\eta^{\frac{1}{2}})=0.\label{eqs1}
\end{align}
Applying \eqref{lem4-1} in Lemma $\ref{lemma4}$, we have
\begin{align}
(\delta_{t}\xi^{\frac{1}{2}},\delta_{t}\eta^{\frac{1}{2}})
=-|\delta_{t}\eta^{\frac{1}{2}}|_{1}^{2}-\frac{h^{2}}{12}\|\delta_{t}\xi^{\frac{1}{2}}\|^{2}
+\frac{h^{4}}{144}|\delta_{t}\xi^{\frac{1}{2}}|_{1}^{2}.\label{eqs2}
\end{align}
According to the definition of $\psi(u,v)_{i}$ and applying Lemma $\ref{lemma5}$, we have
\begin{align}
&\psi(\hat{u}^{0},\hat{u}^\frac{1}{2})_{i}-\psi(u^{0},u^\frac{1}{2})_{i}\nonumber\\
=&\;\psi(\eta^{0}+u^{0},\eta^{\frac{1}{2}}+u^\frac{1}{2})_{i}-\psi(u^{0},u^\frac{1}{2})_{i}\nonumber\\
=&\;\psi(\eta^{0},\eta^{\frac{1}{2}})_{i}+\psi(\eta^{0},u^{\frac{1}{2}})_{i}
+\psi(u^{0},\eta^{\frac{1}{2}})_{i}\nonumber\\
=&\;\psi(\hat{u}^{0},\eta^{\frac{1}{2}})_{i}+\psi(\eta^{0},u^{\frac{1}{2}})_{i}\nonumber\\
=&\;\frac{1}{3}\left[\hat{u}^{0}_{i}\Delta_{x}\eta^{\frac{1}{2}}_{i}
+\Delta_{x}(\hat{u}^{0}\eta^{\frac{1}{2}})_{i}\right]
+\frac{1}{3}\left[\eta^{0}_{i}\Delta_{x}u^{\frac{1}{2}}_{i}
+\Delta_{x}(\eta^{0}u^{\frac{1}{2}})_{i}\right]\nonumber\\
=&\;\frac{1}{3}\left[\hat{u}^{0}_{i}\Delta_{x}\eta^{\frac{1}{2}}_{i}
+\frac{1}{2}\left(\delta_{x}\eta_{i+\frac{1}{2}}^{\frac{1}{2}}\right)\hat{u}_{i+1}^{0}
+\frac{1}{2}\left(\delta_{x}\eta_{i-\frac{1}{2}}^{\frac{1}{2}}\right)\hat{u}_{i-1}^{0}
+\eta_{i}^{\frac{1}{2}}\Delta_{x}\hat{u}_{i}^{0}\right]\nonumber\\
&\;+\frac{1}{3}\left[2\eta^{0}_{i}\Delta_{x}u^{\frac{1}{2}}_{i}
+\frac{1}{2}\left(\delta_{x}\eta_{i+\frac{1}{2}}^{0}\right)u_{i+1}^{\frac{1}{2}}
+\frac{1}{2}\left(\delta_{x}\eta_{i-\frac{1}{2}}^{0}\right)u_{i-1}^{\frac{1}{2}}
\right].\label{eqs3}
\end{align}
From \eqref{eq82}--\eqref{eq88}, \eqref{eqs3}, we have
\begin{align}
&-(\psi(\hat{u}^{0},\hat{u}^\frac{1}{2})-\psi(u^{0},u^\frac{1}{2}),
\delta_{t}\eta^{\frac{1}{2}})\nonumber\\
=\;&-\frac{h}{3}\sum_{i=1}^{M}\left[\hat{u}^{0}_{i}\Delta_{x}\eta^{\frac{1}{2}}_{i}
+\frac{1}{2}\left(\delta_{x}\eta_{i+\frac{1}{2}}^{\frac{1}{2}}\right)\hat{u}_{i+1}^{0}
+\frac{1}{2}\left(\delta_{x}\eta_{i-\frac{1}{2}}^{\frac{1}{2}}\right)\hat{u}_{i-1}^{0}
+\eta_{i}^{\frac{1}{2}}\Delta_{x}\hat{u}_{i}^{0}\right]\delta_{t}\eta_{i}^{\frac{1}{2}}\nonumber\\
&\;-\frac{h}{3}\sum_{i=1}^{M}\left[2\eta^{0}_{i}\Delta_{x}u^{\frac{1}{2}}_{i}
+\frac{1}{2}\left(\delta_{x}\eta_{i+\frac{1}{2}}^{0}\right)u_{i+1}^{\frac{1}{2}}
+\frac{1}{2}\left(\delta_{x}\eta_{i-\frac{1}{2}}^{0}\right)u_{i-1}^{\frac{1}{2}}
\right]\delta_{t}\eta_{i}^{\frac{1}{2}}\nonumber\\
\leqslant&\;\frac{1}{3}\left(\|\hat{u}^{0}\|_{\infty}\cdot|\eta^{\frac{1}{2}}|_{1}
+|\eta^{\frac{1}{2}}|_{1}\cdot\|\hat{u}^{0}\|_{\infty}+\|\eta^{\frac{1}{2}}\|_{\infty}
\cdot|\hat{u}^{0}|_{1}\right)\cdot\|\delta_{t}\eta^{\frac{1}{2}}\|\nonumber\\
&\;+\frac{1}{3}\left(2\|\eta^{0}\|_{\infty}\cdot|u^{\frac{1}{2}}|_{1}+|\eta^{0}|_{1}
\cdot\|u^{\frac{1}{2}}\|_{\infty}\right)\cdot\|\delta_{t}\eta^{\frac{1}{2}}\|\nonumber\\
\leqslant&\;\left(\frac{2}{3}\|\hat{u}^{0}\|_{\infty}+\frac{\sqrt{L}}{6}|\hat{u}^{0}|_{1}\right)
\cdot|\eta^{\frac{1}{2}}|_{1}\cdot\|\delta_{t}\eta^{\frac{1}{2}}\|
+\left(\frac{\sqrt{L}}{3}|u^{\frac{1}{2}}|_{1}+\frac{1}{3}\|u^{\frac{1}{2}}\|_{\infty}\right)
\cdot|\eta^{0}|_{1}\cdot\|\delta_{t}\eta^{\frac{1}{2}}\|\nonumber\\
\leqslant&\;\left[\frac{2}{3}\cdot\frac{\sqrt{L}}{2}(|\varphi|_{1}+|\phi^{0}|_{1})
+\frac{\sqrt{L}}{6}(|\varphi|_{1}+|\phi^{0}|_{1})\right]\cdot|\eta^{\frac{1}{2}}|_{1}
\cdot\|\delta_{t}\eta^{\frac{1}{2}}\|\nonumber\\
&\;+\left[\frac{\sqrt{L}}{3}(1+\sqrt{L}c_{0})
+\frac{1}{3}\cdot\frac{\sqrt{L}}{2}(1+\sqrt{L}c_{0})\right]\cdot|\eta^{0}|_{1}\cdot
\|\delta_{t}\eta^{\frac{1}{2}}\|\nonumber\\
=&\;\frac{\sqrt{L}}{2}(|\varphi|_{1}+|\phi^{0}|_{1})\cdot|\eta^{\frac{1}{2}}|_{1}
\cdot\|\delta_{t}\eta^{\frac{1}{2}}\|+\frac{\sqrt{L}}{2}(1+\sqrt{L}c_{0})\cdot
|\eta^{0}|_{1}\cdot\|\delta_{t}\eta^{\frac{1}{2}}\|.\label{eqs4}
\end{align}
Similarly, it is concluded that
\begin{align*}
&\psi(\hat{v}^{0},\hat{u}^\frac{1}{2})_{i}-\psi(v^{0},u^\frac{1}{2})_{i}\nonumber\\
=&\;\frac{1}{3}\left[\hat{v}^{0}_{i}\Delta_{x}\eta^{\frac{1}{2}}_{i}
+\frac{1}{2}\left(\delta_{x}\eta_{i+\frac{1}{2}}^{\frac{1}{2}}\right)\hat{v}_{i+1}^{0}
+\frac{1}{2}\left(\delta_{x}\eta_{i-\frac{1}{2}}^{\frac{1}{2}}\right)\hat{v}_{i-1}^{0}
+\eta_{i}^{\frac{1}{2}}\Delta_{x}\hat{v}_{i}^{0}\right]\nonumber\\
&\;+\frac{1}{3}\left[2\xi^{0}_{i}\Delta_{x}u^{\frac{1}{2}}_{i}
+\frac{1}{2}\left(\delta_{x}\xi_{i+\frac{1}{2}}^{0}\right)u_{i+1}^{\frac{1}{2}}
+\frac{1}{2}\left(\delta_{x}\xi_{i-\frac{1}{2}}^{0}\right)u_{i-1}^{\frac{1}{2}}
\right].
\end{align*}
Thus we have
\begin{align}
&(\psi(\hat{v}^{0},\hat{u}^\frac{1}{2})-\psi(v^{0},u^\frac{1}{2}),
\delta_{t}\eta^{\frac{1}{2}})\nonumber\\
=&\;\frac{h}{3}\sum_{i=1}^{M}\left[\hat{v}^{0}_{i}\Delta_{x}\eta^{\frac{1}{2}}_{i}
+\frac{1}{2}\left(\delta_{x}\eta_{i+\frac{1}{2}}^{\frac{1}{2}}\right)\hat{v}_{i+1}^{0}
+\frac{1}{2}\left(\delta_{x}\eta_{i-\frac{1}{2}}^{\frac{1}{2}}\right)\hat{v}_{i-1}^{0}
+\eta_{i}^{\frac{1}{2}}\Delta_{x}\hat{v}_{i}^{0}\right]\delta_{t}\eta_{i}^{\frac{1}{2}}\nonumber\\
&\;+\frac{h}{3}\sum_{i=1}^{M}\left[2\xi^{0}_{i}\Delta_{x}u^{\frac{1}{2}}_{i}
+\frac{1}{2}\left(\delta_{x}\xi_{i+\frac{1}{2}}^{0}\right)u_{i+1}^{\frac{1}{2}}
+\frac{1}{2}\left(\delta_{x}\xi_{i-\frac{1}{2}}^{0}\right)u_{i-1}^{\frac{1}{2}}
\right]\delta_{t}\eta_{i}^{\frac{1}{2}}\nonumber\\
\leqslant&\;\frac{1}{3}\left(\|\hat{v}^{0}\|_{\infty}\cdot|\eta^{\frac{1}{2}}|_{1}
+|\eta^{\frac{1}{2}}|_{1}\cdot\|\hat{v}^{0}\|_{\infty}+\|\eta^{\frac{1}{2}}\|_{\infty}
\cdot|\hat{v}^{0}|_{1}\right)\cdot\|\delta_{t}\eta^{\frac{1}{2}}\|\nonumber\\
&\;+\frac{1}{3}\left(2\|\xi^{0}\|\cdot\|\Delta_{x}u^{\frac{1}{2}}\|_{\infty}
+|\xi^{0}|_{1}\cdot\|u^{\frac{1}{2}}\|_{\infty}\right)\cdot\|\delta_{t}
\eta^{\frac{1}{2}}\|\nonumber\\
\leqslant&\;\left(\frac{2}{3}\|\hat{v}^{0}\|_{\infty}+\frac{\sqrt{L}}{6}|\hat{v}^{0}|_{1}
\right)\cdot|\eta^{\frac{1}{2}}|_{1}\cdot\|\delta_{t}\eta^{\frac{1}{2}}\|
+\left(\frac{2}{3}\|\Delta_{x}u^{\frac{1}{2}}\|_{\infty}
+\frac{2}{3h}\|u^{\frac{1}{2}}\|_{\infty}\right)\cdot\|\xi^{0}\|\cdot
\|\delta_{t}\eta^{\frac{1}{2}}\|\nonumber\\
\leqslant&\;\left[\frac{2}{3}\cdot\frac{3\sqrt{L}}{h^{2}}(|\varphi|_{1}+|\phi^{0}|_{1})
+\frac{\sqrt{L}}{6}\cdot\frac{6}{h^{2}}(|\varphi|_{1}+|\phi^{0}|_{1})\right]\cdot
|\eta^{\frac{1}{2}}|_{1}\cdot\|\delta_{t}\eta^{\frac{1}{2}}\|\nonumber\\
&\;+\left[\frac{2}{3}\cdot\frac{\sqrt{L}}{h}(1+\sqrt{L}c_{0})+\frac{2}{3h}\cdot
\frac{\sqrt{L}}{2}(1+\sqrt{L}c_{0})\right]\cdot\|\xi^{0}\|\cdot
\|\delta_{t}\eta^{\frac{1}{2}}\|\nonumber\\
=&\;\frac{3\sqrt{L}}{h^{2}}(|\varphi|_{1}+|\phi^{0}|_{1})\cdot|\eta^{\frac{1}{2}}|_{1}
\cdot\|\delta_{t}\eta^{\frac{1}{2}}\|+\frac{\sqrt{L}}{h}(1+\sqrt{L}c_{0})\cdot\|\xi^{0}\|\cdot
\|\delta_{t}\eta^{\frac{1}{2}}\|.\label{eqs6}
\end{align}
In addition, applying Cauchy-Schwarz inequality, we have
\begin{align}
-(\Delta_{x}\eta^{\frac{1}{2}},\delta_{t}\eta^{\frac{1}{2}})
\leqslant\|\Delta_{x}\eta^{\frac{1}{2}}\|\cdot\|\delta_{t}\eta^{\frac{1}{2}}\|.\label{eqs7}
\end{align}
Moreover, it holds
\begin{align}
(\Delta_{x}\xi^{\frac{1}{2}},\delta_{t}\eta^{\frac{1}{2}})
\leqslant|\Delta_{x}\eta^{\frac{1}{2}}|_{1}\cdot|\delta_{t}\eta^{\frac{1}{2}}|_{1}
+\frac{h^{2}}{12}|\xi^{\frac{1}{2}}|_{1}\cdot\|\delta_{t}\xi^{\frac{1}{2}}\|
+\frac{h^{4}}{144}|\Delta_{x}\xi^{\frac{1}{2}}|_{1}\cdot|\delta_{t}\xi^{\frac{1}{2}}|_{1}.\label{eqs8}
\end{align}
Similar to the derivation of \eqref{eq25}, we have
\begin{align}
&\;(\xi^{\frac{1}{2}},\delta_{t}\eta^{\frac{1}{2}})
=-\frac{1}{2\tau}(|\eta^{1}|_{1}^{2}-|\eta^{0}|_{1}^{2})-\frac{h^{2}}{12}
\cdot\frac{1}{2\tau}(\|\xi^{1}\|^{2}-\|\xi^{0}\|^{2})+\frac{h^{4}}{144}
\cdot\frac{1}{2\tau}(|\xi^{1}|_{1}^{2}-|\xi^{0}|_{1}^{2}).\label{eqs9}
\end{align}
Substituting \eqref{eqs2}, \eqref{eqs4}--\eqref{eqs9}into
\eqref{eqs1}, we have
\begin{align}
&\|\delta_{t}\eta^{\frac{1}{2}}\|^{2}\nonumber\\
\leqslant&\;\mu\left(-|\delta_{t}\eta^{\frac{1}{2}}|_{1}^{2}-\frac{h^{2}}{12}
\|\delta_{t}\xi^{\frac{1}{2}}\|^{2}+\frac{h^{4}}{144}|\delta_{t}
\xi^{\frac{1}{2}}|_{1}^{2}\right)+\kappa\cdot\|\Delta_{x}\eta^{\frac{1}{2}}
\|\cdot\|\delta_{t}\eta^{\frac{1}{2}}\|\nonumber\\
&\;+\gamma\left[\frac{\sqrt{L}}{2}(|\varphi|_{1}+|\phi^{0}|_{1})\cdot
|\eta^{\frac{1}{2}}|_{1}\cdot\|\delta_{t}\eta^{\frac{1}{2}}\|+\frac{\sqrt{L}}{2}
(1+\sqrt{L}c_{0})\cdot|\eta^{0}|_{1}\cdot\|\delta_{t}\eta^{\frac{1}{2}}\|\right]\nonumber\\
&\;+\frac{\gamma h^{2}}{2}\left[\frac{3\sqrt{L}}{h^{2}}(|\varphi|_{1}+|\phi^{0}|_{1})
\cdot|\eta^{\frac{1}{2}}|_{1}\cdot\|\delta_{t}\eta^{\frac{1}{2}}\|+\frac{\sqrt{L}}{h}
(1+\sqrt{L}c_{0})\cdot\|\xi^{0}\|\cdot\|\delta_{t}\eta^{\frac{1}{2}}\|\right]\nonumber\\
&\;
+\frac{\kappa h^{2}}{6}\left(|\Delta_{x}\eta^{\frac{1}{2}}|_{1}\cdot|\delta_{t}
\eta^{\frac{1}{2}}|_{1}+\frac{h^{2}}{12}|\xi^{\frac{1}{2}}|_{1}\cdot
\|\delta_{t}\xi^{\frac{1}{2}}\|+\frac{h^{4}}{144}|\Delta_{x}\xi^{\frac{1}{2}}|_{1}
\cdot|\delta_{t}\xi^{\frac{1}{2}}|_{1}\right)\nonumber\\
&\;+\nu\left[-\frac{1}{2\tau}(|\eta^{1}|_{1}^{2}-|\eta^{0}|_{1}^{2})-\frac{h^{2}}{12}
\cdot\frac{1}{2\tau}(\|\xi^{1}\|^{2}-\|\xi^{0}\|^{2})+\frac{h^{4}}{144}
\cdot\frac{1}{2\tau}(|\xi^{1}|_{1}^{2}-|\xi^{0}|_{1}^{2})\right]\nonumber\\
\leqslant&\;-\frac{\mu h^{2}}{12}\|\delta_{t}\xi^{\frac{1}{2}}\|^{2}
+\frac{\mu h^{4}}{144}\cdot\frac{4}{h^{2}}\|\delta_{t}\xi^{\frac{1}{2}}\|^{2}
+\frac{1}{6}\|\delta_{t}\eta^{\frac{1}{2}}\|^{2}
+\frac{3\kappa^{2}}{2}\|\Delta_{x}\eta^{\frac{1}{2}}\|^{2}
+\frac{1}{6}\|\delta_{t}\eta^{\frac{1}{2}}\|^{2}\nonumber\\
&\;+\frac{3\gamma^{2}L}{8}(|\varphi|_{1}
+|\phi^{0}|_{1})^{2}|\eta^{\frac{1}{2}}|_{1}^{2}+\frac{1}{6}\|\delta_{t}
\eta^{\frac{1}{2}}\|^{2}+\frac{3\gamma^{2}L}{8}(1+\sqrt{L}c_{0})^{2}|\eta^{0}|_{1}^{2}+\frac{1}{6}
\|\delta_{t}\eta^{\frac{1}{2}}\|^{2}\nonumber\\
&\;+\frac{27\gamma^{2}L}{8}(|\varphi|_{1}
+|\phi^{0}|_{1})^{2}|\eta^{\frac{1}{2}}|_{1}^{2}+\frac{1}{6}\|\delta_{t}
\eta^{\frac{1}{2}}\|^{2}+\frac{3\gamma^{2}h^{2}L}{8}(1+\sqrt{L}c_{0})^{2}
\|\xi^{0}\|^{2}+\frac{1}{6}\cdot\frac{h^{2}}{4}|\delta_{t}\eta^{\frac{1}{2}}|_{1}^{2}\nonumber\\
&\;+\frac{\kappa^{2}h^{2}}{6}|\Delta_{x}\eta^{\frac{1}{2}}|_{1}^{2}
+\frac{\mu h^{2}}{36}\|\delta_{t}\xi^{\frac{1}{2}}\|^{2}
+\frac{\kappa^{2}h^{6}}{576\mu}|\xi^{\frac{1}{2}}|_{1}^{2}
+\frac{\mu h^{2}}{36}\cdot\frac{h^{2}}{4}|\delta_{t}\xi^{\frac{1}{2}}|_{1}^{2}
+\frac{\kappa^{2}h^{8}}{144^{2}\mu}|\Delta_{x}\xi^{\frac{1}{2}}|_{1}^{2}\nonumber\\
&\;-\frac{\nu}{2\tau}\left[(|\eta^{1}|_{1}^{2}-|\eta^{0}|_{1}^{2})+\frac{h^{2}}{12}
(\|\xi^{1}\|^{2}-\|\xi^{0}\|^{2})-\frac{h^{4}}{144}
(|\xi^{1}|_{1}^{2}-|\xi^{0}|_{1}^{2})\right]\nonumber\\
\leqslant&\;\|\delta_{t}\eta^{\frac{1}{2}}\|^{2}+\left[\frac{3\gamma^{2}L}{8}(|\varphi|_{1}
+|\phi^{0}|_{1})^{2}+\frac{27\gamma^{2}L}{8}(|\varphi|_{1}+|\phi^{0}|_{1})^{2}+\frac{3\kappa^{2}}{2}
+\frac{2\kappa^{2}}{3}\right]|\eta^{\frac{1}{2}}|_{1}^{2}
\nonumber\\
&\; +\frac{3\gamma^{2}L}{8}(1+\sqrt{L}c_{0})^{2}|\eta^{0}|_{1}^{2}
+\left(\frac{\kappa^{2}h^{4}}{144\mu}+\frac{\kappa^{2}h^{4}}{1296\mu}\right)
\|\xi^{\frac{1}{2}}\|^{2}+\frac{3\gamma^{2}h^{2}L}{8}
(1+\sqrt{L}c_{0})^{2}\|\xi^{0}\|^{2}\nonumber\\
&\;-\frac{\nu}{2\tau}\left[(|\eta^{1}|_{1}^{2}-|\eta^{0}|_{1}^{2})+\frac{h^{2}}{12}
(\|\xi^{1}\|^{2}-\|\xi^{0}\|^{2})-\frac{h^{4}}{144}
(|\xi^{1}|_{1}^{2}-|\xi^{0}|_{1}^{2})\right]\nonumber\\
\leqslant&\;\|\delta_{t}\eta^{\frac{1}{2}}\|^{2}+\left[\frac{15\gamma^{2}L}{4}
(|\varphi|_{1}+|\phi^{0}|_{1})^{2}+\frac{13\kappa^{2}}{6}\right]\cdot
\frac{|\eta^{1}|_{1}^{2}+|\eta^{0}|_{1}^{2}}{2}+\frac{3\gamma^{2}L}{8}
(1+\sqrt{L}c_{0})^{2}|\eta^{0}|_{1}^{2}\nonumber\\
&\;+\frac{5\kappa^{2}h^{4}}{648\mu}\cdot\frac{\|\xi^{1}\|^{2}+\|\xi^{0}\|^{2}}{2}
+\frac{3\gamma^{2}h^{2}L}{8}(1+\sqrt{L}c_{0})^{2}\|\xi^{0}\|^{2}\nonumber\\
&\;-\frac{\nu}{2\tau}\left[(|\eta^{1}|_{1}^{2}-|\eta^{0}|_{1}^{2})+\frac{h^{2}}{12}
(\|\xi^{1}\|^{2}-\|\xi^{0}\|^{2})-\frac{h^{4}}{144}
(|\xi^{1}|_{1}^{2}-|\xi^{0}|_{1}^{2})\right].\label{eqs10}
\end{align}
Simplifying the formula \eqref{eqs10}, we have
\begin{align*}
&\frac{1}{2}\left[(|\eta^{1}|_{1}^{2}+|\eta^{0}|_{1}^{2})+\frac{h^{2}}{12}
(\|\xi^{1}\|^{2}+\|\xi^{0}\|^{2})-\frac{h^{4}}{144}
(|\xi^{1}|_{1}^{2}+|\xi^{0}|_{1}^{2})\right]\\
\leqslant&\;\left(|\eta^{0}|_{1}^{2}+\frac{h^{2}}{12}\|\xi^{0}\|^{2}
-\frac{h^{4}}{144}|\xi^{0}|_{1}^{2}\right)+\left[\frac{15\gamma^{2}\tau L}{4\nu}
(|\varphi|_{1}+|\phi^{0}|_{1})^{2}+\frac{13\kappa^{2}\tau}{6\nu}\right]\cdot
\frac{|\eta^{1}|_{1}^{2}+|\eta^{0}|_{1}^{2}}{2}\nonumber\\
&\;+\frac{3\gamma^{2}\tau L}{8\nu}(1+\sqrt{L}c_{0})^{2}|\eta^{0}|_{1}^{2}
+\frac{5\kappa^{2}h^{4}\tau}{648\mu\nu}\cdot\frac{\|\xi^{1}\|^{2}+\|\xi^{0}\|^{2}}{2}
+\frac{3\gamma^{2}h^{2}\tau L}{8\nu}(1+\sqrt{L}c_{0})^{2}\|\xi^{0}\|^{2}\\
\leqslant&\;\left[\frac{15\gamma^{2}\tau L}{4\nu}(|\varphi|_{1}+|\phi^{0}|_{1})
^{2}+\frac{13\kappa^{2}\tau}{6\nu}\right]\cdot\frac{|\eta^{1}|_{1}^{2}
+|\eta^{0}|_{1}^{2}}{2}+\frac{5\kappa^{2}h^{2}\tau}{36\mu\nu}\cdot\frac{h^{2}}{36}
(\|\xi^{1}\|^{2}+\|\xi^{0}\|^{2})\nonumber\\
&\;+\left[1+\frac{3\gamma^{2}\tau L}{8\nu}(1+\sqrt{L}c_{0})^{2}\right]
\cdot|\eta^{0}|_{1}^{2}+\left[\left(\frac{h^{2}}{12}-\frac{h^{4}}{144}\cdot
\frac{6}{L^{2}}\right)+\frac{3\gamma^{2}h^{2}\tau L}{8\nu}(1+\sqrt{L}c_{0})^{2}\right]\cdot\|\xi^{0}\|^{2}\nonumber\\
\leqslant&\;\left[\frac{15\gamma^{2}\tau L}{4\nu}(|\varphi|_{1}+|\phi^{0}|_{1})
^{2}+\frac{13\kappa^{2}\tau}{6\nu}\right]\cdot\frac{|\eta^{1}|_{1}^{2}
+|\eta^{0}|_{1}^{2}}{2}+\frac{5\kappa^{2}h^{2}\tau}{36\mu\nu}\cdot\frac{h^{2}}{36}
(\|\xi^{1}\|^{2}+\|\xi^{0}\|^{2})\nonumber\\
&\;+\left[1+\frac{3\gamma^{2}\tau L}{8\nu}(1+\sqrt{L}c_{0})^{2}+\frac{9}{h^{2}}\left(\frac{h^{2}}{12}-\frac{h^{4}}{144}\cdot
\frac{6}{L^{2}}\right)+\frac{27\gamma^{2}\tau L}{8\nu}(1+\sqrt{L}c_{0})^{2}\right]
\cdot|\eta^{0}|_{1}^{2}\nonumber\\
\leqslant&\;c_{12}\tau
\cdot\frac{1}{2}\left[(|\eta^{1}|_{1}^{2}+|\eta^{0}|_{1}^{2})+\frac{h^{2}}{12}
(\|\xi^{1}\|^{2}+\|\xi^{0}\|^{2})
-\frac{h^{4}}{144}(|\xi^{1}|_{1}^{2}+|\xi^{0}|_{1}^{2})\right]\nonumber\\
&\;+\left[\frac{7}{4}+\frac{15\gamma^{2}\tau L}{4\nu}(1+\sqrt{L}c_{0})^{2}\right]
\cdot|\eta^{0}|_{1}^{2},
\end{align*}
when $h\leqslant h_{0}$, $\tau\leqslant \tau_{0}$ and $c_{12}\tau\leqslant 1/4$, we have
\begin{align*}
&\frac{1}{2}\left[(|\eta^{1}|_{1}^{2}+|\eta^{0}|_{1}^{2})+\frac{h^{2}}{12}
(\|\xi^{1}\|^{2}+\|\xi^{0}\|^{2})-\frac{h^{4}}{144}
(|\xi^{1}|_{1}^{2}+|\xi^{0}|_{1}^{2})\right]\\
\leqslant&\;\frac{1}{4}\cdot\frac{1}{2}\left[(|\eta^{1}|_{1}^{2}+|\eta^{0}|_{1}^{2})
+\frac{h^{2}}{12}(\|\xi^{1}\|^{2}+\|\xi^{0}\|^{2})-\frac{h^{4}}{144}
(|\xi^{1}|_{1}^{2}+|\xi^{0}|_{1}^{2})\right]+c_{13}|\eta^{0}|_{1}^{2}.
\end{align*}
Therefore, we have
\begin{align}
G^{1}\leqslant \frac{4c_{13}}{3}|\phi^{0}|_{1}^{2}.\label{eqs11}
\end{align}
Moreover
\begin{align}
|\eta^{1}|_{1}\leqslant \sqrt{\frac{8c_{13}}{3}}|\phi^{0}|_{1}.\label{eqs12}
\end{align}
Taking the inner product of \eqref{eq18bbb} with $\Delta_{t}\eta^{k}$, we have
\begin{align}
&\|\Delta_{t}\eta^{k}\|^{2}-\mu(\Delta_{t}\xi^{k},\Delta_{t}\eta^{k})
+\gamma(\psi(\hat{u}^{k},\hat{u}^{\bar{k}})-\psi(u^{k},u^{\bar{k}}),
\Delta_{t}\eta^{k})-\frac{\gamma h^{2}}{2}(\psi(\hat{v}^{k},
\hat{u}^{\bar{k}})-\psi(v^{k},u^{\bar{k}}),\Delta_{t}\eta^{k})\nonumber\\
&+\kappa(\Delta_{x}\eta^{\bar{k}},\Delta_{t}\eta^{k})-\frac{\kappa h^{2}}{6}
(\Delta_{x}\xi^{\bar{k}},\Delta_{t}\eta^{k})-\nu(\xi^{\bar{k}},
\Delta_{t}\eta^{k})=0,\quad 1\leqslant k\leqslant N-1.\label{eqs13}
\end{align}Now we suppose that $|\eta^{k}|_{1}\leqslant c_{11}|\phi^{0}|_{1}$ holds for $k=1,2,\cdots,l$ with $1\leqslant l\leqslant N-1$. When $c_{11}|\phi^{0}|_{1}\leqslant 1$, we have
\begin{align}
&|\hat{u}^{k}|_{1}\leqslant|\eta^{k}|_{1}+|u^{k}|_{1}\leqslant 2+\sqrt{L}c_{0},\quad 1\leqslant k\leqslant l,\label{eq89}\\
&\|\hat{u}^{k}\|_{\infty}\leqslant\frac{\sqrt{L}}{2}|\hat{u}^{k}|_{1}\leqslant
\frac{\sqrt{L}}{2}(2+\sqrt{L}c_{0}),\quad 1\leqslant k\leqslant l,\label{eq90}\\
&|\hat{v}^{k}|_{1}\leqslant\frac{2}{h}\|\hat{v}^{k}\|\leqslant\frac{2}{h}
\cdot\frac{3}{h}|\hat{u}^{k}|_{1}\leqslant\frac{6}{h^{2}}(2+\sqrt{L}c_{0}),\quad 1\leqslant k\leqslant l,\label{eq91}\\
&\|\hat{v}^{k}\|_{\infty}\leqslant\frac{\sqrt{L}}{2}|\hat{v}^{k}|_{1}\leqslant
\frac{3\sqrt{L}}{h^{2}}(2+\sqrt{L}c_{0}),\quad 1\leqslant k\leqslant l.\label{eq92}
\end{align}
Applying \eqref{lem4-1} in Lemma $\ref{lemma4}$, we have
\begin{align}
(\Delta_{t}\xi^{k},\Delta_{t}\eta^{k})
=-|\Delta_{t}\eta^{k}|_{1}^{2}-\frac{h^{2}}{12}\|\Delta_{t}\xi^{k}\|^{2}
+\frac{h^{4}}{144}|\Delta_{t}\xi^{k}|_{1}^{2},\quad 1\leqslant k\leqslant l.\label{eqs13}
\end{align}
According to the definition of $\psi(u,v)_{i}$ and applying Lemma $\ref{lemma5}$, we have
\begin{align}
&\psi(\hat{u}^{k},\hat{u}^{\bar{k}})_{i}-\psi(u^{k},u^{\bar{k}})_{i}\nonumber\\
=&\;\frac{1}{3}\left[\hat{u}^{k}_{i}\Delta_{x}\eta^{\bar{k}}_{i}
+\frac{1}{2}\left(\delta_{x}\eta_{i+\frac{1}{2}}^{\bar{k}}\right)\hat{u}_{i+1}^{k}
+\frac{1}{2}\left(\delta_{x}\eta_{i-\frac{1}{2}}^{\bar{k}}\right)\hat{u}_{i-1}^{k}
+\eta_{i}^{\bar{k}}\Delta_{x}\hat{u}_{i}^{k}\right]\nonumber\\
&\;+\frac{1}{3}\left[2\eta^{k}_{i}\Delta_{x}u^{\bar{k}}_{i}
+\frac{1}{2}\left(\delta_{x}\eta_{i+\frac{1}{2}}^{k}\right)u_{i+1}^{\bar{k}}
+\frac{1}{2}\left(\delta_{x}\eta_{i-\frac{1}{2}}^{k}\right)u_{i-1}^{\bar{k}}
\right].\label{eqs14}
\end{align}
According to \eqref{eqs14}, we have
\begin{align}
&-(\psi(\hat{u}^{k},\hat{u}^{\bar{k}})-\psi(u^{k},u^{\bar{k}}),
\Delta_{t}\eta^{\bar{k}})\nonumber\\
=\;&-\frac{h}{3}\sum_{i=1}^{M}\left[\hat{u}^{k}_{i}\Delta_{x}\eta^{\bar{k}}_{i}
+\frac{1}{2}\left(\delta_{x}\eta_{i+\frac{1}{2}}^{\bar{k}}\right)\hat{u}_{i+1}^{k}
+\frac{1}{2}\left(\delta_{x}\eta_{i-\frac{1}{2}}^{\bar{k}}\right)\hat{u}_{i-1}^{k}
+\eta_{i}^{\bar{k}}\Delta_{x}\hat{u}_{i}^{k}\right]\Delta_{t}\eta_{i}^{k}\nonumber\\
&\;-\frac{h}{3}\sum_{i=1}^{M}\left[2\eta^{k}_{i}\Delta_{x}u^{\bar{k}}_{i}
+\frac{1}{2}\left(\delta_{x}\eta_{i+\frac{1}{2}}^{k}\right)u_{i+1}^{\bar{k}}
+\frac{1}{2}\left(\delta_{x}\eta_{i-\frac{1}{2}}^{k}\right)u_{i-1}^{\bar{k}}
\right]\Delta_{t}\eta_{i}^{k}\nonumber\\
\leqslant&\;\frac{1}{3}\left(\|\hat{u}^{k}\|_{\infty}\cdot|\eta^{\bar{k}}|_{1}
+|\eta^{\bar{k}}|_{1}\cdot\|\hat{u}^{k}\|_{\infty}+\|\eta^{\bar{k}}\|_{\infty}
\cdot|\hat{u}^{k}|_{1}\right)\cdot\|\Delta_{t}\eta^{k}\|\nonumber\\
&\;+\frac{1}{3}\left(2\|\eta^{k}\|_{\infty}\cdot|u^{\bar{k}}|_{1}+|\eta^{k}|_{1}
\cdot\|u^{\bar{k}}\|_{\infty}\right)\cdot\|\Delta_{t}\eta^{k}\|\nonumber\\
\leqslant&\;\left(\frac{2}{3}\|\hat{u}^{k}\|_{\infty}+\frac{\sqrt{L}}{6}|\hat{u}^{k}|_{1}\right)
\cdot|\eta^{\bar{k}}|_{1}\cdot\|\Delta_{t}\eta^{k}\|
+\left(\frac{\sqrt{L}}{3}|u^{\bar{k}}|_{1}+\frac{1}{3}\|u^{\bar{k}}\|_{\infty}\right)
\cdot|\eta^{k}|_{1}\cdot\|\Delta_{t}\eta^{k}\|\nonumber\\
\leqslant&\;\left[\frac{2}{3}\cdot\frac{\sqrt{L}}{2}(2+\sqrt{L}c_{0})
+\frac{\sqrt{L}}{6}(2+\sqrt{L}c_{0})\right]\cdot|\eta^{\bar{k}}|_{1}\cdot
\|\Delta_{t}\eta^{k}\|\nonumber\\
&\;+\left[\frac{\sqrt{L}}{3}(1+\sqrt{L}c_{0})+\frac{1}{3}\cdot
\frac{\sqrt{L}}{2}(1+\sqrt{L}c_{0})\right]\cdot|\eta^{k}|_{1}\cdot
\|\Delta_{t}\eta^{k}\|\nonumber\\
=&\;\frac{\sqrt{L}}{2}(2+\sqrt{L}c_{0})\cdot|\eta^{\bar{k}}|_{1}\cdot
\|\Delta_{t}\eta^{k}\|+\frac{\sqrt{L}}{2}(1+\sqrt{L}c_{0})\cdot|\eta^{k}|_{1}\cdot
\|\Delta_{t}\eta^{k}\|,\quad 1\leqslant k\leqslant l.\label{eqs15}
\end{align}
Similarly, it is concluded that
\begin{align*}
&\psi(\hat{v}^{k},\hat{u}^{\bar{k}})_{i}-\psi(v^{k},u^{\bar{k}})_{i}\nonumber\\
=&\;\frac{1}{3}\left[\hat{v}^{k}_{i}\Delta_{x}\eta^{\bar{k}}_{i}
+\frac{1}{2}\left(\delta_{x}\eta_{i+\frac{1}{2}}^{\bar{k}}\right)\hat{v}_{i+1}^{k}
+\frac{1}{2}\left(\delta_{x}\eta_{i-\frac{1}{2}}^{\bar{k}}\right)\hat{v}_{i-1}^{k}
+\eta_{i}^{\bar{k}}\Delta_{x}\hat{v}_{i}^{k}\right]\nonumber\\
&\;+\frac{1}{3}\left[2\xi^{k}_{i}\Delta_{x}u^{\bar{k}}_{i}
+\frac{1}{2}\left(\delta_{x}\xi_{i+\frac{1}{2}}^{k}\right)u_{i+1}^{\bar{k}}
+\frac{1}{2}\left(\delta_{x}\xi_{i-\frac{1}{2}}^{k}\right)u_{i-1}^{\bar{k}}
\right].
\end{align*}
Thus we have
\begin{align}
&(\psi(\hat{v}^{k},\hat{u}^{\bar{k}})-\psi(v^{k},u^{\bar{k}}),
\delta_{t}\eta^{\bar{k}})\nonumber\\
=&\;\frac{h}{3}\sum_{i=1}^{M}\left[\hat{v}^{k}_{i}\Delta_{x}\eta^{\bar{k}}_{i}
+\frac{1}{2}\left(\delta_{x}\eta_{i+\frac{1}{2}}^{\bar{k}}\right)\hat{v}_{i+1}^{k}
+\frac{1}{2}\left(\delta_{x}\eta_{i-\frac{1}{2}}^{\bar{k}}\right)\hat{v}_{i-1}^{k}
+\eta_{i}^{\bar{k}}\Delta_{x}\hat{v}_{i}^{k}\right]\Delta_{t}\eta_{i}^{k}\nonumber\\
&\;+\frac{h}{3}\sum_{i=1}^{M}\left[2\xi^{k}_{i}\Delta_{x}u^{\bar{k}}_{i}
+\frac{1}{2}\left(\delta_{x}\xi_{i+\frac{1}{2}}^{k}\right)u_{i+1}^{\bar{k}}
+\frac{1}{2}\left(\delta_{x}\xi_{i-\frac{1}{2}}^{k}\right)u_{i-1}^{\bar{k}}
\right]\Delta_{t}\eta_{i}^{k}\nonumber\\
\leqslant&\;\frac{1}{3}\left(\|\hat{v}^{k}\|_{\infty}\cdot|\eta^{\bar{k}}|_{1}
+|\eta^{\bar{k}}|_{1}\cdot\|\hat{v}^{k}\|_{\infty}+\|\eta^{\bar{k}}\|_{\infty}
\cdot|\hat{v}^{k}|_{1}\right)\cdot\|\Delta_{t}\eta^{k}\|\nonumber\\
&\;+\frac{1}{3}\left(2\|\xi^{k}\|\cdot\|\Delta_{x}u^{\bar{k}}\|_{\infty}
+|\xi^{k}|_{1}\cdot\|u^{\bar{k}}\|_{\infty}\right)\cdot\|\Delta_{t}
\eta^{k}\|\nonumber\\
\leqslant&\;\left(\frac{2}{3}\|\hat{v}^{k}\|_{\infty}+\frac{\sqrt{L}}{6}|\hat{v}^{k}|_{1}
\right)\cdot|\eta^{\bar{k}}|_{1}\cdot\|\Delta_{t}\eta^{k}\|
+\left(\frac{2}{3}\|\Delta_{x}u^{\bar{k}}\|_{\infty}
+\frac{2}{3h}\|u^{\bar{k}}\|_{\infty}\right)\cdot\|\xi^{k}\|\cdot
\|\Delta_{t}\eta^{k}\|\nonumber\\
\leqslant&\;\left[\frac{2}{3}\cdot\frac{3\sqrt{L}}{h^{2}}(2+\sqrt{L}c_{0})
+\frac{\sqrt{L}}{6}\cdot\frac{6}{h^{2}}(2+\sqrt{L}c_{0})\right]\cdot
|\eta^{\bar{k}}|_{1}\cdot\|\Delta_{t}\eta^{k}\|\nonumber\\
&\;+\left[\frac{2}{3}\cdot\frac{\sqrt{L}}{h}(1+\sqrt{L}c_{0})+\frac{2}{3h}\cdot
\frac{\sqrt{L}}{2}(1+\sqrt{L}c_{0})\right]\cdot\|\xi^{k}\|\cdot
\|\Delta_{t}\eta^{k}\|\nonumber\\
=&\;\frac{3\sqrt{L}}{h^{2}}(2+\sqrt{L}c_{0})\cdot|\eta^{\bar{k}}|_{1}\cdot
\|\Delta_{t}\eta^{k}\|+\frac{\sqrt{L}}{h}(1+\sqrt{L}c_{0})\cdot\|\xi^{k}\|\cdot
\|\Delta_{t}\eta^{k}\|,\quad 1\leqslant k\leqslant l.\label{eqs17}
\end{align}
Moreover, similar to \eqref{eq71}--\eqref{eq72}, it holds
\begin{align}
&-(\Delta_{x}\eta^{\bar{k}},\Delta_{t}\eta^{k})\leqslant \|\Delta_{x}\eta^{\bar{k}}\|\cdot\|\Delta_{t}\eta^{k}\|,\quad 1\leqslant k\leqslant l,\label{eqs18}\\
&(\Delta_{x}\xi^{\bar{k}},\Delta_{t}\eta^{k})\leqslant|\Delta_{x}\eta^{\bar{k}}|_{1}
\cdot|\Delta_{t}\eta^{k}|_{1}+\frac{h^{2}}{12}|\xi^{\bar{k}}|_{1}
\cdot\|\Delta_{t}\xi^{k}\|+\frac{h^{4}}{144}|\Delta_{x}\xi^{\bar{k}}|_{1}
\cdot|\Delta_{t}\xi^{k}|_{1},\quad 1\leqslant k\leqslant l,\label{eqs19}\\
&(\xi^{\bar{k}},\Delta_{t}\eta^{k})=-\frac{1}{4\tau}\left[(|\eta^{k+1}|_{1}^{2}
-|\eta^{k-1}|_{1}^{2})+\frac{h^{2}}{12}(\|\xi^{k+1}\|^{2}
-\|\xi^{k-1}\|^{2})-\frac{h^{4}}{144}(|\xi^{k+1}|_{1}^{2}
-|\xi^{k-1}|_{1}^{2})\right],\nonumber\\
&\qquad\qquad\qquad\qquad\qquad\qquad\qquad\qquad 1\leqslant k\leqslant l.\label{eqs20}
\end{align}
Substituting \eqref{eqs13}, \eqref{eqs15}--\eqref{eqs20} into \eqref{eqs13}, we have
\begin{align}
&\;\|\Delta_{t}\eta^{k}\|^{2}\nonumber\\
\leqslant&\;\mu\left(-|\Delta_{t}\eta^{k}|_{1}^{2}-\frac{h^{2}}{12}\|\Delta_{t}\xi^{k}\|^{2}
+\frac{h^{4}}{144}|\Delta_{t}\xi^{k}|_{1}^{2}\right)\nonumber\\
&\;+\gamma\left[\frac{\sqrt{L}}{2}(2+\sqrt{L}c_{0})\cdot|\eta^{\bar{k}}|_{1}\cdot
\|\Delta_{t}\eta^{k}\|+\frac{\sqrt{L}}{2}(1+\sqrt{L}c_{0})
\times|\eta^{k}|_{1}\cdot\|\Delta_{t}\eta^{k}\|\right]\nonumber\\
&\;+\frac{\gamma h^{2}}{2}\left[\frac{3\sqrt{L}}{h^{2}}(2+\sqrt{L}c_{0})\cdot|\eta^{\bar{k}}|_{1}\cdot
\|\Delta_{t}\eta^{k}\|+\frac{\sqrt{L}}{h}(1+\sqrt{L}c_{0})\cdot\|\xi^{k}\|\cdot
\|\Delta_{t}\eta^{k}\|\right]\nonumber\\
&\;+\kappa\|\Delta_{x}\eta^{\bar{k}}\|\cdot\|\Delta_{t}\eta^{k}\|
+\frac{\kappa h^{2}}{6}\left(|\Delta_{x}\eta^{\bar{k}}|_{1}
\cdot|\Delta_{t}\eta^{k}|_{1}+\frac{h^{2}}{12}|\xi^{\bar{k}}|_{1}
\cdot\|\Delta_{t}\xi^{k}\|+\frac{h^{4}}{144}|\Delta_{x}\xi^{\bar{k}}|_{1}
\cdot|\Delta_{t}\xi^{k}|_{1}\right)\nonumber\\
&\;-\frac{1}{4\tau}\left[(|\eta^{k+1}|_{1}^{2}
-|\eta^{k-1}|_{1}^{2})+\frac{h^{2}}{12}(\|\xi^{k+1}\|^{2}
-\|\xi^{k-1}\|^{2})-\frac{h^{4}}{144}(|\xi^{k+1}|_{1}^{2}
-|\xi^{k-1}|_{1}^{2})\right]\nonumber\\
\leqslant&\;-\frac{\mu h^{2}}{12}\|\Delta_{t}\xi^{k}\|^{2}+\frac{\mu h^{4}}
{144}\cdot\frac{h^{2}}{4}\|\Delta_{t}\xi^{k}\|^{2}+\frac{1}{6}\|\Delta_{t}\eta^{k}\|^{2}
+\frac{3\gamma^{2}L}{8}(2+\sqrt{L}c_{0})^{2}|\eta^{\bar{k}}|_{1}^{2}
+\frac{1}{6}\|\Delta_{t}\eta^{k}\|^{2}\nonumber\\
&\;+\frac{3\gamma^{2}L}{8}(1+\sqrt{L}c_{0})^{2}|\eta^{k}|_{1}^{2}
+\frac{1}{6}\|\Delta_{t}\eta^{k}\|^{2}+\frac{27\gamma^{2}L}{8}(2+\sqrt{L}c_{0})^{2}
\cdot|\eta^{\bar{k}}|_{1}^{2}+\frac{1}{6}\|\Delta_{t}\eta^{k}\|^{2}\nonumber\\
&\;+\frac{3\gamma^{2}h^{2}L}{8}(1+\sqrt{L}c_{0})^{2}\|\xi^{k}\|^{2}
+\frac{1}{6}\|\Delta_{t}\eta^{k}\|^{2}+\frac{3\kappa^{2}}{2}\|\Delta_{x}\eta^{\bar{k}}\|^{2}
+\frac{1}{6}\cdot\frac{h^{2}}{4}|\Delta_{t}\eta^{k}|_{1}^{2}+\frac{\kappa^{2}h^{2}}{6}
|\Delta_{x}\eta^{\bar{k}}|_{1}^{2}\nonumber\\
&\;+\frac{\mu h^{2}}{36}\|\Delta_{t}\xi^{k}\|^{2}+\frac{\kappa^{2}h^{6}}{576\mu}
|\xi^{\bar{k}}|_{1}^{2}+\frac{\mu h^{2}}{36}\cdot\frac{h^{2}}{4}
|\Delta_{t}\xi^{k}|_{1}^{2}+\frac{\kappa^{2}h^{8}}{144^{2}\mu}
|\Delta_{x}\xi^{\bar{k}}|_{1}^{2}\nonumber\\
&\;-\frac{1}{4\tau}\left[(|\eta^{k+1}|_{1}^{2}
-|\eta^{k-1}|_{1}^{2})+\frac{h^{2}}{12}(\|\xi^{k+1}\|^{2}
-\|\xi^{k-1}\|^{2})-\frac{h^{4}}{144}(|\xi^{k+1}|_{1}^{2}
-|\xi^{k-1}|_{1}^{2})\right]\nonumber\\
\leqslant&\;\|\Delta_{t}\eta^{k}\|^{2}+\left[\frac{3\gamma^{2}L}{8}(2+\sqrt{L}c_{0})^{2}
+\frac{27\gamma^{2}L}{8}(2+\sqrt{L}c_{0})^{2}
+\frac{3\kappa^{2}}{2}+\frac{2\kappa^{2}}{3}\right]|\eta^{\bar{k}}|_{1}^{2}\nonumber\\
&\;
+\frac{3\gamma^{2}L}{8}(1+\sqrt{L}c_{0})^{2}|\eta^{k}|_{1}^{2}+\frac{3\gamma^{2}h^{2}L}{8}(1+\sqrt{L}c_{0})^{2}
\|\xi^{k}\|^{2}+\left(\frac{\kappa^{2}h^{4}}{144\mu}+\frac{\kappa^{2}h^{4}}{1296\mu}\right)
\|\xi^{\bar{k}}\|^{2}\nonumber\\
&\;-\frac{1}{4\tau}\left[(|\eta^{k+1}|_{1}^{2}
-|\eta^{k-1}|_{1}^{2})+\frac{h^{2}}{12}(\|\xi^{k+1}\|^{2}
-\|\xi^{k-1}\|^{2})-\frac{h^{4}}{144}(|\xi^{k+1}|_{1}^{2}
-|\xi^{k-1}|_{1}^{2})\right]\nonumber\\
\leqslant&\;\|\Delta_{t}\eta^{k}\|^{2}+\left[\frac{15\gamma^{2}L}{4}(2+\sqrt{L}c_{0})^{2}
+\frac{13\kappa^{2}}{6}\right]\cdot\frac{|\eta^{k+1}|_{1}^{2}+|\eta^{k-1}|_{1}^{2}}{2}
+\frac{3\gamma^{2}L}{8}(1+\sqrt{L}c_{0})^{2}\cdot|\eta^{k}|_{1}^{2}\nonumber\\
&\;+\frac{3\gamma^{2}h^{2}L}{8}(1+\sqrt{L}c_{0})^{2}\cdot
\|\xi^{k}\|^{2}+\frac{5\kappa^{2}h^{4}}{648\mu}\cdot\frac{\|\xi^{k+1}\|^{2}
+\|\xi^{k-1}\|^{2}}{2}\nonumber\\
&\;-\frac{1}{4\tau}\left[(|\eta^{k+1}|_{1}^{2}
-|\eta^{k-1}|_{1}^{2})+\frac{h^{2}}{12}(\|\xi^{k+1}\|^{2}
-\|\xi^{k-1}\|^{2})-\frac{h^{4}}{144}(|\xi^{k+1}|_{1}^{2}
-|\xi^{k-1}|_{1}^{2})\right],\quad 1\leqslant k\leqslant l.\label{eqs21}
\end{align}
Simplifying the formula \eqref{eqs21}, we have
\begin{align*}
&\frac{\nu}{2\tau}\left[\frac{1}{2}(|\eta^{k+1}|_{1}^{2}+|\eta^{k}|_{1}^{2})+\frac{h^{2}}{24}
(\|\xi^{k+1}\|^{2}+\|\xi^{k}\|^{2})-\frac{h^{4}}{288}(|\xi^{k+1}|_{1}^{2}+|\xi^{k}|
_{1}^{2})\right]\nonumber\\
&-\frac{\nu}{2\tau}\left[\frac{1}{2}(|\eta^{k}|_{1}^{2}+|\eta^{k-1}|_{1}^{2})
+\frac{h^{2}}{24}(\|\xi^{k}\|^{2}+\|\xi^{k-1}\|^{2})-\frac{h^{4}}{288}(|\xi^{k}|_{1}^{2}
+|\xi^{k-1}|_{1}^{2})\right]\nonumber\\
\leqslant&\;\left[\frac{15\gamma^{2}L}{4}(2+\sqrt{L}c_{0})^{2}
+\frac{13\kappa^{2}}{6}\right]\cdot\frac{|\eta^{k+1}|_{1}^{2}+|\eta^{k-1}|_{1}^{2}}{2}
+\frac{3\gamma^{2}L}{8}(1+\sqrt{L}c_{0})^{2}\cdot|\eta^{k}|_{1}^{2}\nonumber\\
&\;+\frac{3\gamma^{2}h^{2}L}{8}(1+\sqrt{L}c_{0})^{2}\cdot
\|\xi^{k}\|^{2}+\frac{5\kappa^{2}h^{4}}{648\mu}\cdot\frac{\|\xi^{k+1}\|^{2}
+\|\xi^{k-1}\|^{2}}{2}\nonumber\\
\leqslant&\;\left[\frac{15\gamma^{2}L}{4}(2+\sqrt{L}c_{0})^{2}
+\frac{13\kappa^{2}}{6}+\frac{3\gamma^{2}L}{8}(1+\sqrt{L}c_{0})^{2}\right]\cdot
\left(\frac{|\eta^{k+1}|_{1}^{2}+|\eta^{k}|_{1}^{2}}{2}+\frac{|\eta^{k}|_{1}^{2}
+|\eta^{k-1}|_{1}^{2}}{2}\right)\nonumber\\
&\;+\left[\frac{18}{h^{2}}\cdot\frac{3\gamma^{2}h^{2}L}{8}(1+\sqrt{L}c_{0})^{2}
+\frac{18}{h^{2}}\cdot\frac{5\kappa^{2}h^{4}}{648\mu}\right]\cdot
\left[\frac{h^{2}}{36}\left(\|\xi^{k+1}\|^{2}+\|\xi^{k}\|^{2}\right)
+\frac{h^{2}}{36}\left(\|\xi^{k}\|^{2}+\|\xi^{k-1}\|^{2}\right)\right],\\
=&\;\left[\frac{15\gamma^{2}L}{4}(2+\sqrt{L}c_{0})^{2}
+\frac{13\kappa^{2}}{6}+\frac{3\gamma^{2}L}{8}(1+\sqrt{L}c_{0})^{2}\right]\cdot
\left(\frac{|\eta^{k+1}|_{1}^{2}+|\eta^{k}|_{1}^{2}}{2}+\frac{|\eta^{k}|_{1}^{2}
+|\eta^{k-1}|_{1}^{2}}{2}\right)\nonumber\\
&\;+\left[\frac{27\gamma^{2}L}{4}(1+\sqrt{L}c_{0})^{2}
+\frac{5\kappa^{2}h^{2}}{36\mu}\right]\cdot
\left[\frac{h^{2}}{36}\left(\|\xi^{k+1}\|^{2}+\|\xi^{k}\|^{2}\right)
+\frac{h^{2}}{36}\left(\|\xi^{k}\|^{2}+\|\xi^{k-1}\|^{2}\right)\right],\\
&\qquad\qquad\qquad\qquad\qquad\qquad\qquad\qquad\qquad 1\leqslant k\leqslant l.
\end{align*}
when $h\leqslant h_{0}$, $\tau\leqslant\tau_{0}$, we have
\begin{align*}
&\frac{\nu}{2\tau}\left[\frac{1}{2}(|\eta^{k+1}|_{1}^{2}+|\eta^{k}|_{1}^{2})+\frac{h^{2}}{24}
(\|\xi^{k+1}\|^{2}+\|\xi^{k}\|^{2})-\frac{h^{4}}{288}(|\xi^{k+1}|_{1}^{2}+|\xi^{k}|
_{1}^{2})\right]\nonumber\\
&-\frac{\nu}{2\tau}\left[\frac{1}{2}(|\eta^{k}|_{1}^{2}+|\eta^{k-1}|_{1}^{2})
+\frac{h^{2}}{24}(\|\xi^{k}\|^{2}+\|\xi^{k-1}\|^{2})-\frac{h^{4}}{288}(|\xi^{k}|_{1}^{2}
+|\xi^{k-1}|_{1}^{2})\right]\nonumber\\
\leqslant&\;c_{14}\left(\frac{|\eta^{k+1}|_{1}^{2}+|\eta^{k}|_{1}^{2}}{2}+\frac{|\eta^{k}|_{1}^{2}
+|\eta^{k-1}|_{1}^{2}}{2}\right)+c_{15}\left[\frac{h^{2}}{36}\left(\|\xi^{k+1}\|^{2}
+\|\xi^{k}\|^{2}\right)+\frac{h^{2}}{36}\left(\|\xi^{k}\|^{2}+\|\xi^{k-1}\|^{2}\right)\right]\nonumber\\
\leqslant&\;c_{16}\left[\frac{|\eta^{k+1}|_{1}^{2}+|\eta^{k}|_{1}^{2}}{2}+\frac{h^{2}}{36}
\left(\|\xi^{k+1}\|^{2}+\|\xi^{k}\|^{2}\right)\right]+c_{16}\left[\frac{|\eta^{k}|_{1}^{2}
+|\eta^{k-1}|_{1}^{2}}{2}+\frac{h^{2}}{36}\left(\|\xi^{k}\|^{2}+\|\xi^{k-1}\|^{2}
\right)\right],\\
&\qquad\qquad\qquad\qquad\qquad\qquad\qquad\qquad\qquad 1\leqslant k\leqslant l.
\end{align*}
Therefore, we have
\begin{align*}
\frac{\nu}{2\tau}(G^{k+1}-G^{k})\leqslant c_{16}(G^{k+1}+G^{k}),\quad 1\leqslant k\leqslant l.
\end{align*}
According to the Gronwall inequality, when $2c_{16}\tau/\nu\leqslant 1/3$, applying \eqref{eqs11}, we have
\begin{align*}
G^{k+1}\leqslant\exp\left(\frac{6Tc_{16}}{\nu}\right)G^{1}\leqslant\exp
\left(\frac{6Tc_{16}}{\nu}\right)\cdot\frac{4c_{13}}{3}|\phi^{0}|_{1}^{2}= c_{17}|\phi^{0}|_{1}^{2},\quad 1\leqslant k\leqslant l.
\end{align*}
Thus we have
\begin{align*}
|\eta^{k+1}|_{1}\leqslant\sqrt{2c_{17}}|\phi^{0}|_{1},\quad 1\leqslant k\leqslant l.
\end{align*}
By the mathematical induction, we have
\begin{align}
|\eta^{k+1}|_{1}\leqslant c_{11}|\phi^{0}|_{1},\quad 1\leqslant k\leqslant N-1.\label{eqs22}
\end{align}
Combining \eqref{eqs22} with \eqref{eqs12}, we have
\begin{align*}
|\eta^{k}|_{1}\leqslant c_{11}|\phi^{0}|_{1},\quad 0\leqslant k\leqslant N.
\end{align*}
  \end{proof}
\section{Numerical Experiments}
\label{Sec:7}
\setcounter{equation}{0}
In the section, we will implement several numerical examples to verify the effectiveness of our scheme and the correctness of theoretical results.

When the exact solution is known, we define the discrete error in the $L^{\infty}$-norm as follows
\begin{align*}
{\rm E}_{\infty}(h,\tau)=\max_{1\leqslant i\leqslant M,\;0\leqslant k\leqslant N}|U_{i}^{k}-u_{i}^{k}|,
\end{align*}
where $U_{i}^{k}$ and $u_{i}^{k}$ represent the exact solution and the numerical solution, respectively.
Furthermore, denote the spatial and temporal convergence orders, respectively, as
\begin{align*}
\mathrm{Order_{\infty}^{h}}=\log_{2}\frac{{\rm E}_{\infty}(2h,\tau)}{{\rm E}_{\infty}(h,\tau)},\quad
\mathrm{Order_{\infty}^{\tau}}=\log_{2}\frac{{\rm E}_{\infty}(h,2\tau)}{{\rm E}_{\infty}(h,\tau)}.
\end{align*}

When the exact solution is unknown, we use the posterior error estimation to testify the convergence orders in
temporal direction and spatial direction, respectively.
For sufficient small $h$, we denote
\begin{align*}
{\rm F}_{\infty}(h,\tau)=\max\limits_{1\leqslant i\leqslant M, \; 0\leqslant k\leqslant N}|u_{i}^{k}(h,\tau)-u_{2i}^{k}(h/2,\tau)|,
\quad \mathrm{Order}_{\infty}^{h}=\log_{2}\left(\frac{{\rm F}_{\infty}(2h,\tau)}{{\rm F}_{\infty}(h,\tau)}\right),
\end{align*}
and for sufficient small $\tau$, we denote
 \begin{align*}
{\rm G}_{\infty}(h,\tau)=\max\limits_{0\leqslant i\leqslant M, \; 0\leqslant k\leqslant N}|u_{i}^{k}(h,\tau)-u_{i}^{2k}(h,\tau/2)|,\quad \mathrm{Order}_{\infty}^{\tau}=\log_{2}\left(\frac{{\rm G}_{\infty}(h,2\tau)}{{\rm G}_{\infty}(h,\tau)}\right).
\end{align*}

\begin{example}\label{example1}
We first consider the following {\rm BBMB} equation $($see {\rm \cite{OO2017}}$)$
\begin{align*}
&u_{t}-u_{xxt}+uu_{x}+u_{x}-u_{xx}=f(x,t),\quad 0<x<2,\;0<t\leqslant 1,
\end{align*}
where
\begin{align*}
f(x,t)=(1+2\pi^{2})e^{t}\sin\pi x+\frac{\pi}{2} e^{2t}\sin2\pi x+\pi e^{t}\cos\pi x.
\end{align*}
The initial condition is determined by the exact solution $u(x,t)=e^{t}\sin\pi x$ with the period $L=2$.
\end{example}

The numerical results are reported in Tables \ref{table1}--\ref{table2} and Figures \ref{fig:1}--\ref{fig:2}.

In Table \ref{table1}, we fix the temporal step-size $\tau=1/5000$, meanwhile,
reduce the spatial step-size $h$ half by half $(h=1/4$, $1/8$, $1/16$, $1/32$, $1/64)$.
As we can see, the spatial convergence order approaches to four order approximately,
which is consistent with our convergence results.

In Table \ref{table2}, we fix the spatial step-size $h=1/50$, meanwhile,
reduce the temporal step-size $\tau$ half by half $(\tau=1/20$, $1/40$, $1/80$, $1/160$, $1/320)$.
We observe that the temporal convergence order approaches to two order in maximum norm.

Compared our numerical results with those in \cite{MF2017} from Table \ref{table1} and Table \ref{table2},
we find our scheme is more efficient and accurate.

Moreover, in order to verify the stability of the difference scheme \eqref{eq17}--\eqref{eq20a},
we have drawn the stable error curves in Figure \ref{fig:1}. For each curve, we fixed different
temporal step-size $(\tau=1/8$, $1/16$, $1/32$, $1/64$, $1/128)$ by reducing the spatial step-size
$h$ half by half $(h=1/2$, $1/4$, $1/8$, $1/16$, $1/32$, $1/64)$. We observe that the spatial error in maximum norm
approaches to a fixed value since the numerical errors mainly come from the discretization in time,
which verifies the difference scheme \eqref{eq17}--\eqref{eq20a} is almost unconditional stable.
In Figure \ref{fig:2}, the numerical panorama for $u(x,t)$ and numerical profiles are displayed,
which further demonstrate the high accuracy of our scheme in practical simulation.

\begin{table}[tbh!]
\begin{center}
\renewcommand{\arraystretch}{1.12}
\tabcolsep 0pt \caption{Maximum norm errors behavior versus $h$-grid size reduction with the fixed
temporal step-size $\tau=1/5000$ in Example \ref{example1} }\label{table1}
\def\temptablewidth{0.9\textwidth}
\rule{\temptablewidth}{1pt}
{\footnotesize
\begin{tabular*}{\temptablewidth}{@{\extracolsep{\fill}}ccccc}
&\multicolumn{2}{c}{~~difference scheme \eqref{eq17}--\eqref{eq20a}}&\multicolumn{2}{c}{~~difference scheme in \cite{MF2017}}\\
\cline{2-3}\cline{4-5}
$\quad h$&$\quad  {\rm E}_{\infty}(h,\tau)$&${\rm Order}_{\infty}^h$ &$\quad  {\rm E}_{\infty}(h,\tau)$&${\rm Order}_{\infty}^h$\\
\hline
$\quad  1/ 4$   & $\quad  9.0677{\rm e}-3$  & $ *$          & $\quad  1.8968{\rm e}-2$   & $ *$       \\
$\quad  1/ 8$   & $\quad  5.9120{\rm e}-4$  & $ 3.9390$     & $\quad  1.3213{\rm e}-3$   & $3.8436$   \\
$\quad  1/ 16$  & $\quad  3.7491{\rm e}-5$  & $ 3.9790$     & $\quad  8.7856{\rm e}-5$   & $3.9107$   \\
$\quad  1/ 32$  & $\quad  2.3538{\rm e}-6$  & $ 3.9935$     & $\quad  5.5096{\rm e}-6$   & $3.9951$   \\
$\quad  1/ 64$  & $\quad  1.2326{\rm e}-7$  & $ 4.2552$     & $\quad  3.1792{\rm e}-7$   & $4.1152$
\end{tabular*}}
\rule{\temptablewidth}{1pt}
\end{center}
\end{table}

\begin{table}[tbh!]
\begin{center}
\renewcommand{\arraystretch}{1.12}
\tabcolsep 0pt \caption{Maximum norm errors behavior versus $\tau$-grid size reduction with the fixed spatial step-size $h=1/50$ in Example \ref{example1}  }\label{table2}
\def\temptablewidth{0.9\textwidth}
\rule{\temptablewidth}{1pt}
{\footnotesize
\begin{tabular*}{\temptablewidth}{@{\extracolsep{\fill}}ccccc}
&\multicolumn{2}{c}{~~difference scheme \eqref{eq17}--\eqref{eq20a}}&\multicolumn{2}{c}{~~difference scheme in \cite{MF2017}}\\
\cline{2-3}\cline{4-5}
$\quad \tau$&$\quad  {\rm E}_{\infty}(h,\tau)$&${\rm Order}_{\infty}^{\tau}$&$\quad  {\rm E}_{\infty}(h,\tau)$&${\rm Order}_{\infty}^{\tau}$\\
\hline
$\quad  1/ 20$  & $\quad  1.9486{\rm e}-3$  & $ *$          & $\quad  2.3772{\rm e}-3$   & $ *$   \\
$\quad  1/ 40$  & $\quad  4.8670{\rm e}-4$  & $ 2.0013 $    & $\quad  6.0794{\rm e}-4$   & $1.9673$   \\
$\quad  1/ 80$  & $\quad  1.2144{\rm e}-4$  & $ 2.0028 $    & $\quad  1.5342{\rm e}-4$   & $1.9865$   \\
$\quad  1/ 160$ & $\quad  3.0162{\rm e}-5$  & $ 2.0094 $    & $\quad  3.8242{\rm e}-5$   & $2.0042$  \\
$\quad  1/ 320$ & $\quad  7.3615{\rm e}-6$  & $ 2.0346 $    & $\quad  9.2928{\rm e}-6$   & $2.0410$
\end{tabular*}}
\rule{\temptablewidth}{1pt}
\end{center}
\end{table}

\begin{figure}[htbp!]
\centering
\includegraphics[width=0.7\textwidth]{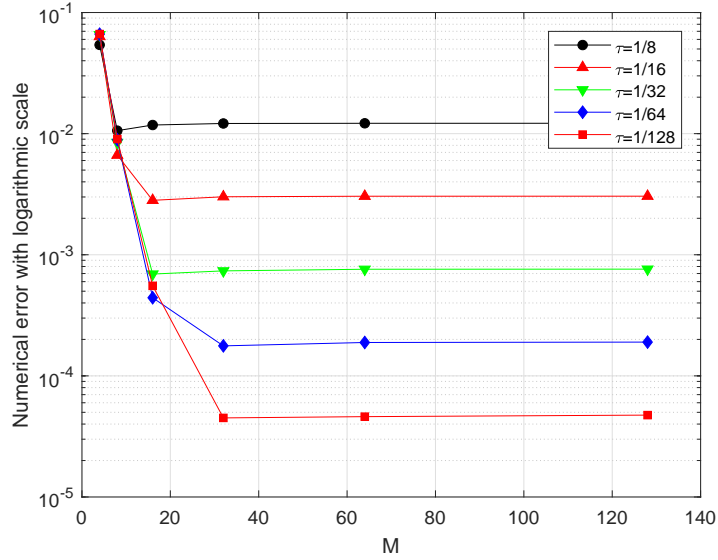}
 \caption{Numerical stability test chart} \label{fig:1}
\end{figure}

\begin{figure}[htbp]
 \subfigure[The numerical panorama for $u(x,t)$ ]{\centering
\includegraphics[width=0.5\textwidth]{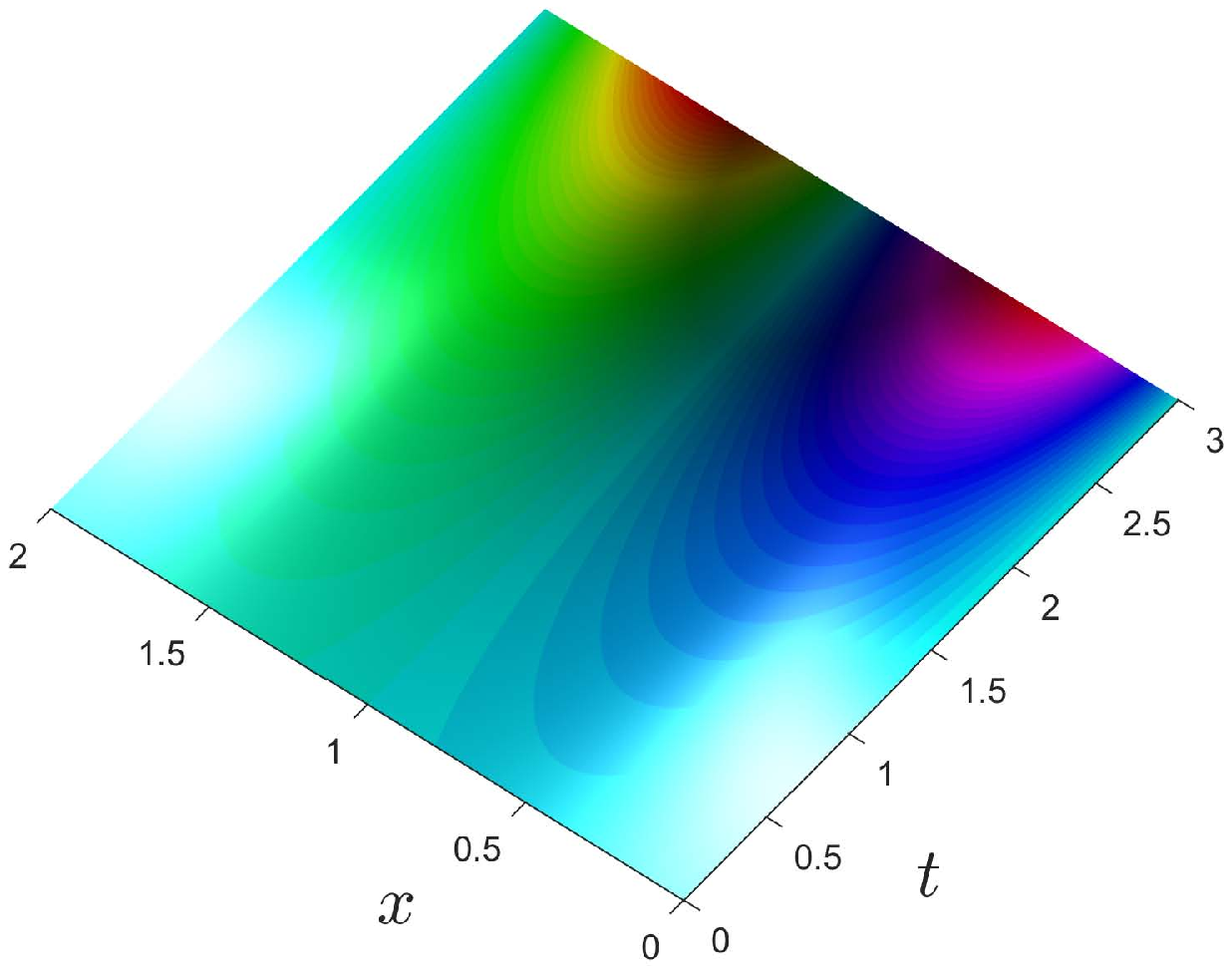}
}\subfigure[Numerical solution profiles]{\centering
\includegraphics[width=0.5\textwidth]{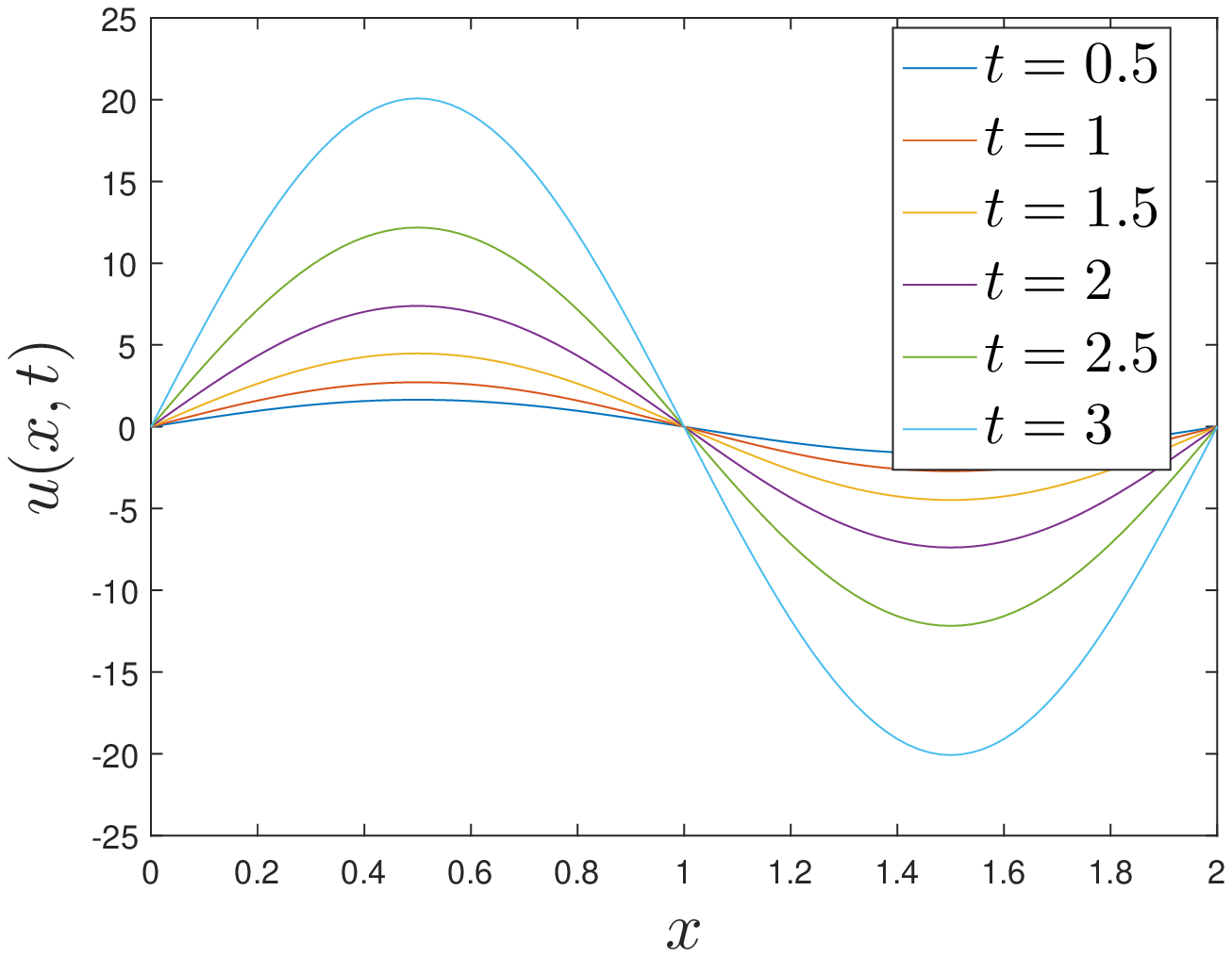}
}
\caption{(a) The numerical solution, (b) the solution profiles for $u(x,t)$ with $t=0.5$, $1$, $1.5$, $2$, $2.5$, $3$} \label{fig:2}
\end{figure}

\begin{example}\label{example2}
Then, we consider the following {\rm BBMB} equation
\begin{align*}
&u_{t}-\mu u_{xxt}+uu_{x}+u_{x}-\nu u_{xx}=0,\quad -25<x<25,\; 0<t\leqslant 1,
\end{align*}
with the initial condition
\begin{align*}
u(x,0)=\frac{1}{2}{\rm sech}^{2}\left(\frac{x}{4}\right),\quad -25\leqslant x\leqslant 25,
\end{align*}
where the exact solution is unknown and the period $L=50$.
\end{example}

The numerical results are showed in Tables \ref{table3}--\ref{table5} and Figure \ref{fig3} with $\mu=1$ and $\nu=1$ .

Firstly, we fix the temporal step-size $\tau=1/2000$, in the meantime, decrease
the spatial step-size $h$ half by half ($M=20,40,80,160,320,640$).
As we can see from Table \ref{table3}, the spatial convergence orders approach to fourth order for both schemes.
However, our scheme is more accurate than that in the reference \cite{MF2017}.

Next, we fix the spatial step-size $h=1/2$, and then reduce the temporal step-size $\tau$ half by half.
The maximum norm error and the temporal convergence orders are listed in Table \ref{table4}.
The temporal convergence order approaches to $\mathcal{O}(\tau^{2})$ approximately. However, the difference scheme in \cite{MF2017}
is less than two and the accuracy is far from enough. We further refine the spatial grid (fixed step size $h=1/100$) and decrease the temporal step-size $\tau$ half by half again in Table \ref{table4b}, though both schemes can achieve orders two, our scheme \eqref{eq17}--\eqref{eq20a} is still better than that in \cite{MF2017} with respect to the accuracy.
Combining Tables \ref{table4} and \ref{table4b}, we conclude that our scheme is more robust and stable that the scheme in \cite{MF2017}, which illustrates the superiority of our scheme.

To further verify the performance of the numerical scheme \eqref{eq17}--\eqref{eq20a} more rigorously,
we test the energy conservation invariants \eqref{E} with different $\mu$ and $\nu$.
The conservation invariants of $E^n$ at different time are demonstrated in Table \ref{table5}.
It is easy to see from Table \ref{table5} that the three-point four-order compact difference scheme can keep the
conservative invariant even for the very small parameters, which demonstrate that our numerical scheme is stable and robust.


\begin{table}[tbh!]
\begin{center}
\renewcommand{\arraystretch}{1.12}
\tabcolsep 0pt \caption{Maximum norm errors behavior versus $h$-grid size reduction with the fixed temporal step-size $\tau=1/2000$ in Example \ref{example2} }\label{table3}
\def\temptablewidth{0.9\textwidth}
\rule{\temptablewidth}{1pt}
{\footnotesize
\begin{tabular*}{\temptablewidth}{@{\extracolsep{\fill}}ccccc}
&\multicolumn{2}{c}{~~difference scheme \eqref{eq17}--\eqref{eq20a}}&\multicolumn{2}{c}{~~difference scheme in \cite{MF2017}}\\
\cline{2-3}\cline{4-5}
$\quad h$&$\quad  {\rm F}_{\infty}(h,\tau)$&${\rm Order}_{\infty}^h$&$\quad  {\rm F}_{\infty}(h,\tau)$&${\rm Order}_{\infty}^h$\\
\hline
$\quad  5/ 4$      & $\quad  4.2025{\rm e}-4$  & $ *$           &$\quad  8.1169{\rm e}-3$  &$*$         \\
$\quad  5/ 8$      & $\quad  3.2284{\rm e}-5$  & $ 3.7024$      &$\quad  7.6856{\rm e}-4$  &$3.4007$    \\
$\quad  5/ 16$     & $\quad  2.0457{\rm e}-6$  & $ 3.9801$      &$\quad  5.7840{\rm e}-5$  &$3.7320$   \\
$\quad  5/ 32$     & $\quad  1.2833{\rm e}-7$  & $ 3.9946$      &$\quad  3.8500{\rm e}-6$  &$3.9091$  \\
$\quad  5/ 64$     & $\quad  7.9715{\rm e}-9$  & $ 4.0089$      &$\quad  2.4304{\rm e}-7$  &$3.9856$
\end{tabular*}}
\rule{\temptablewidth}{1pt}
\end{center}
\end{table}

\begin{table}[tbh!]
\begin{center}
\renewcommand{\arraystretch}{1.12}
\tabcolsep 0pt \caption{Maximum norm errors behavior versus $\tau$-grid size reduction
with the fixed spatial step-size $h=1/2\;(M=100)$ in Example \ref{example2}}\label{table4}
\def\temptablewidth{0.9\textwidth}
\rule{\temptablewidth}{1pt}
{\footnotesize
\begin{tabular*}{\temptablewidth}{@{\extracolsep{\fill}}ccccc}
&\multicolumn{2}{c}{~~difference scheme \eqref{eq17}--\eqref{eq20a}}&\multicolumn{2}{c}{~~difference scheme in \cite{MF2017}}\\
\cline{2-3}\cline{4-5}
$\quad \tau$&$\quad  {\rm G}_{\infty}(h,\tau)$&${\rm Order}_{\infty}^{\tau}$&$\quad  {\rm G}_{\infty}(h,\tau)$&${\rm Order}_{\infty}^{\tau}$\\
\hline
$\quad  1/ 20$  & $\quad  2.1054{\rm e}-5$   & $*$              &  $\quad  3.8007{\rm e}-4$  & $*$   \\
$\quad  1/ 40$  & $\quad  5.4491{\rm e}-6$   & $ 1.9500$        &  $\quad  9.9663{\rm e}-5$  & $1.9311$   \\
$\quad  1/ 80$  & $\quad  1.3852{\rm e}-6$   & $ 1.9759$        &  $\quad  3.0017{\rm e}-5$  & $1.7313$   \\
$\quad  1/ 160$ & $\quad  3.4915{\rm e}-7$   & $ 1.9882$        &  $\quad  1.3680{\rm e}-5$  & $1.1337$   \\
$\quad  1/ 320$ & $\quad  8.7641{\rm e}-8$   & $ 1.9942$        &  $\quad  1.0438{\rm e}-5$  & $0.3903$
\end{tabular*}}
\rule{\temptablewidth}{1pt}
\end{center}
\end{table}

\begin{table}[tbh!]
\begin{center}
\renewcommand{\arraystretch}{1.12}
\tabcolsep 0pt \caption{Maximum norm errors behavior versus $\tau$-grid size reduction
with the refined spatial step-size $h=1/100\;(M=5000)$ in Example \ref{example2}}\label{table4b}
\def\temptablewidth{0.9\textwidth}
\rule{\temptablewidth}{1pt}
{\footnotesize
\begin{tabular*}{\temptablewidth}{@{\extracolsep{\fill}}ccccc}
&\multicolumn{2}{c}{~~difference scheme \eqref{eq17}--\eqref{eq20a}}&\multicolumn{2}{c}{~~difference scheme in \cite{MF2017}}\\
\cline{2-3}\cline{4-5}
$\quad \tau$&$\quad  {\rm G}_{\infty}(h,\tau)$&${\rm Order}_{\infty}^{\tau}$&$\quad  {\rm G}_{\infty}(h,\tau)$&${\rm Order}_{\infty}^{\tau}$\\
\hline
$\quad  1/ 10$  & $\quad  3.0686e-4$   & $*$              &  $\quad  1.4755{\rm e}-3$  & $*$   \\
$\quad  1/ 20$  & $\quad  7.8967e-5$   & $ 1.9583$        &  $\quad  3.7422{\rm e}-4$  & $1.9793$   \\
$\quad  1/ 40$  & $\quad  2.1227e-5$   & $ 1.8954$        &  $\quad  9.4215{\rm e}-5$  & $1.9898$   \\
$\quad  1/ 80$  & $\quad  5.4878e-6$   & $ 1.9516$        &  $\quad  2.3636{\rm e}-5$  & $1.9950$   \\
$\quad  1/ 160$ & $\quad  1.3945e-6$   & $ 1.9765$        &  $\quad  5.9192{\rm e}-6$  & $1.9975$
\end{tabular*}}
\rule{\temptablewidth}{1pt}
\end{center}
\end{table}

\begin{table}[tbh!]
\begin{center}
\renewcommand{\arraystretch}{1.12}
\tabcolsep 0pt \caption{Numerical invariants of $E^n$ at time $t$ with $h=1/5$ and $\tau=1/256$ in Example \ref{example2}\label{table5}}
\def\temptablewidth{0.9\textwidth}
\rule{\temptablewidth}{1pt}
{\footnotesize
\begin{tabular*}{\temptablewidth}{@{\extracolsep{\fill}}ccccc}
\quad$t\qquad$  &$(\mu,\nu) = (100,1)\qquad$  &$(\mu,\nu) = (1,1)\qquad$ &$ (\mu,\nu) = (0.01,0.01)\qquad$ &$ (\mu,\nu) = (0.0001,0.0001)$  \qquad \\
\hline
 \quad$0\qquad$   &$7.999997216956726 \qquad$&$1.399999972059210\qquad$ & $1.333999999610235\qquad$ &$1.333339999885745\qquad$    \\
 \quad$2\qquad$   &$7.999997216861070 \qquad$&$1.399999972053378\qquad$ & $1.333999999610103\qquad$ &$1.333339999885731\qquad$    \\
 \quad$4\qquad$   &$7.999997216774900 \qquad$&$1.399999972048419\qquad$ & $1.333999999609973\qquad$ &$1.333339999885733\qquad$    \\
 \quad$6\qquad$   &$7.999997216690209 \qquad$&$1.399999972044242\qquad$ & $1.333999999609848\qquad$ &$1.333339999885742\qquad$    \\
 \quad$8\qquad$   &$7.999997216644139 \qquad$&$1.399999972040209\qquad$ & $1.333999999609709\qquad$ &$1.333339999885743\qquad$   \\
\end{tabular*}}
\rule{\temptablewidth}{1pt}
\end{center}
\end{table}

\begin{figure}[htbp]
 \subfigure[The numerical panorama for $u(x,t)$ ]{\centering
\includegraphics[width=0.5\textwidth]{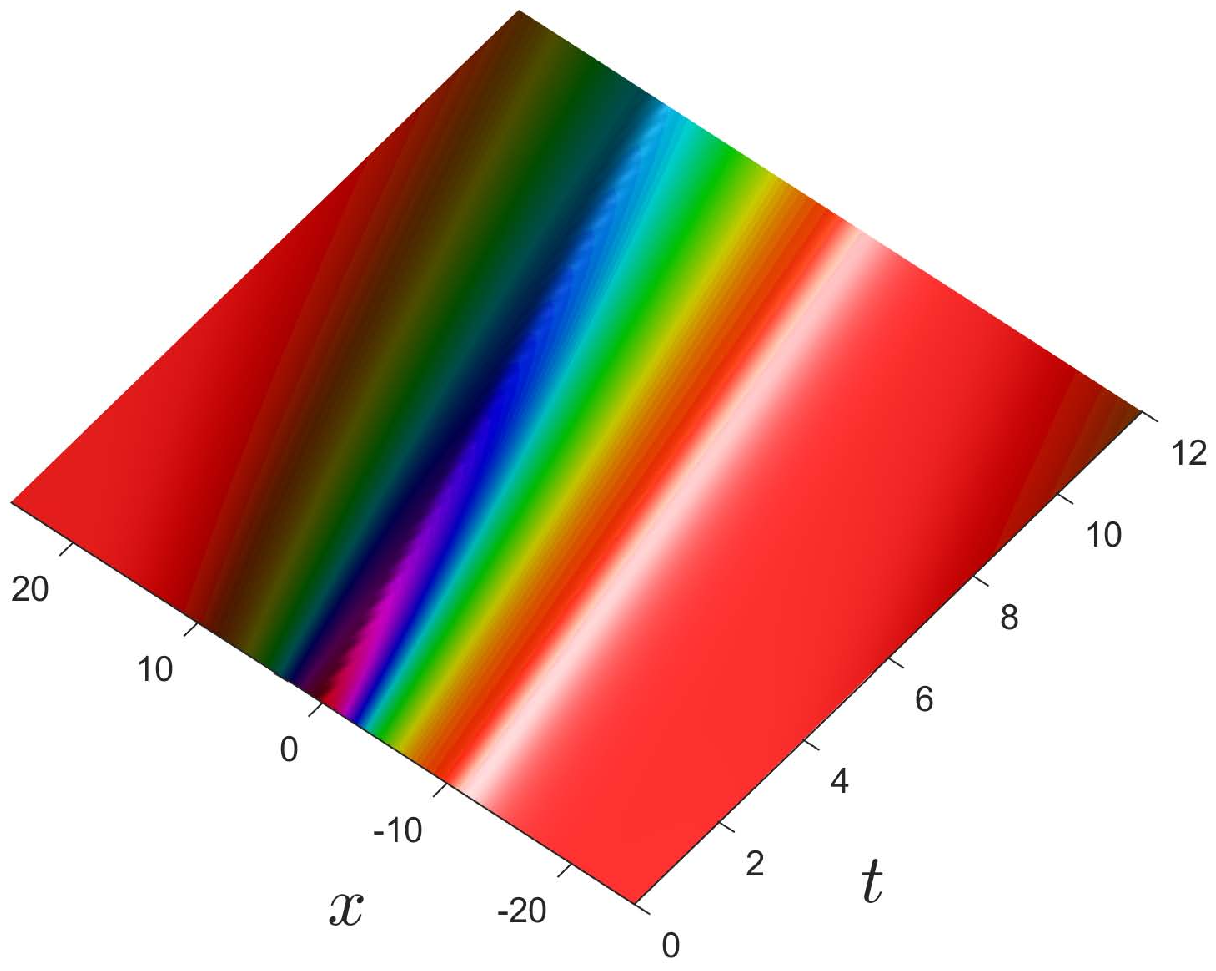}
}\subfigure[Numerical solution profiles]{\centering
\includegraphics[width=0.5\textwidth]{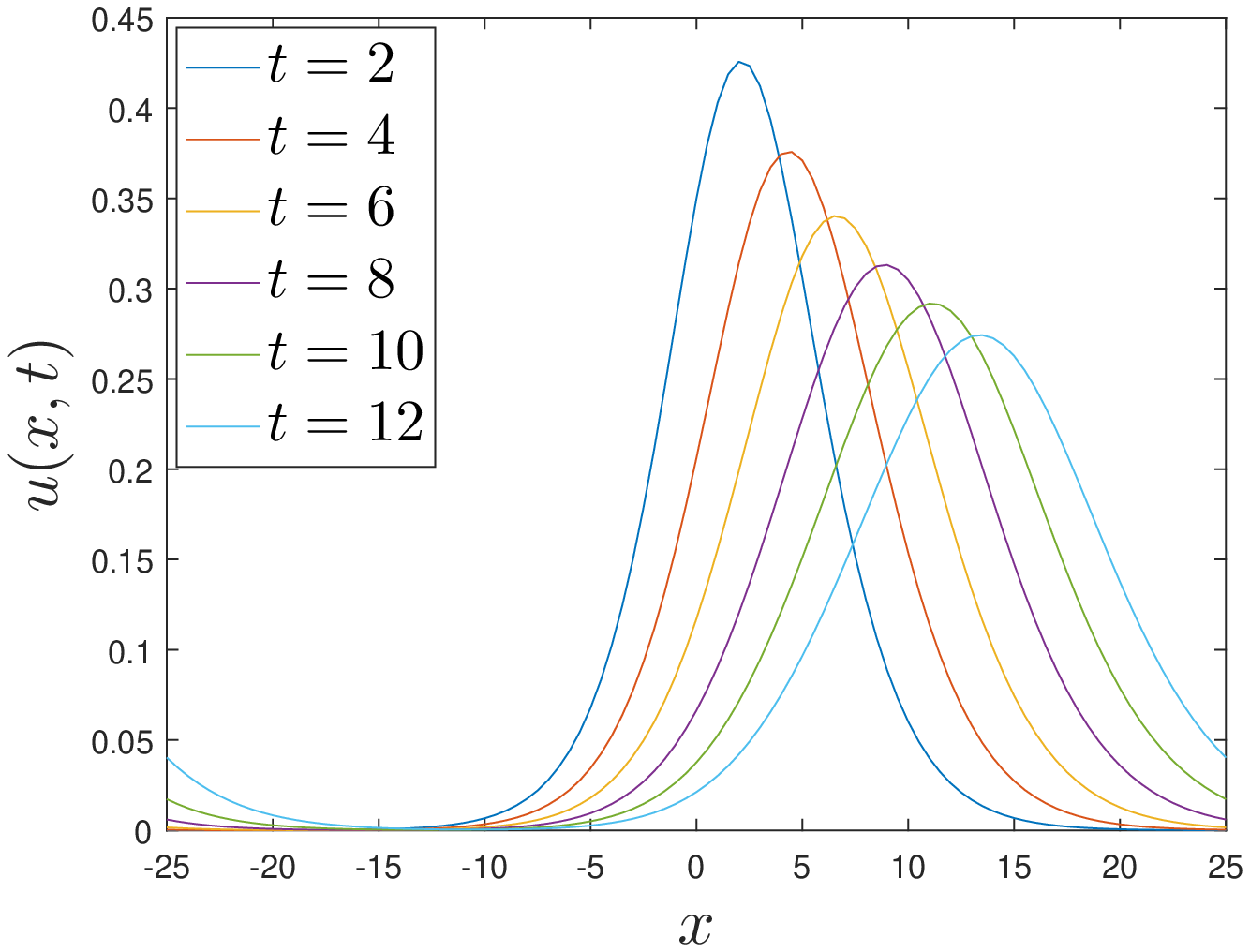}
}
\caption{(a) The numerical solution $t=12$, (b) the solution profiles for $u(x,t)$ with $t=2$, $4$, $6$, $8$, $10$, $12$} \label{fig3}
\end{figure}

\begin{example}\label{example3}
  Finally, we consider a nonlinear {\rm BBMB} equation
   \begin{align*}
&u_{t}-\mu u_{xxt}+\gamma uu_{x}+\kappa u_{x}-\nu u_{xx}+ F'(u)=0,\quad x_l<x<x_r,\;0<t\leqslant T, \\
&u(x,0)=\phi(x),\quad x_l\leqslant x\leqslant x_r,
\end{align*}
where
   $F(u) = 1/4\cdot (1-u^2)^2$, $x_l = -50$, $x_r=50$, $\mu=\gamma=\kappa = \nu =1$. The initial condition is $\phi(x)=\frac{\sqrt{6}}{3}{\rm sech}^2\left(\frac{x}{3}\right)$.
\end{example}
Since the above problem is nonlinear, we use Newton linearized technique (see \cite{ZLZ2020}) for practical implementation.
In order to demonstrate the superiority of the present scheme, we compare it with the numerical result in \cite{ZLZ2020}
with the period boundary condition. The corresponding convergence orders in spatial direction and temporal direction are reported in Tables \ref{table7} and \ref{table8}, respectively. We see from Table \ref{table7} that the numerical errors are better than those in \cite{ZLZ2020} along with the spatial direction.
According to the results in Tables \ref{table7} and \ref{table8}, we know that the convergence orders are two in time and four in space for difference scheme \eqref{eq17}--\eqref{eq20a}, which are consistent with our theoretical results.

The numerical surfaces and the numerical curves are simulated by difference scheme \eqref{eq17}--\eqref{eq20a} in Figures \ref{fig4}-\ref{fig5}.
 We see that the present scheme is much more accurate than that in \cite{ZLZ2020} and clearly depicts the evolutionary process of the solution.

\begin{table}[tbh!]
\begin{center}
\renewcommand{\arraystretch}{1.12}
\tabcolsep 0pt \caption{Maximum norm errors behavior versus $h$-grid size reduction with the fixed temporal step-size $\tau=1/100$ in Example \ref{example3} }\label{table7}
\def\temptablewidth{0.9\textwidth}
\rule{\temptablewidth}{1pt}
{\footnotesize
\begin{tabular*}{\temptablewidth}{@{\extracolsep{\fill}}ccccc}
&\multicolumn{2}{c}{~~difference scheme \eqref{eq17}--\eqref{eq20a}}&\multicolumn{2}{c}{~~difference scheme in \cite{ZLZ2020}}\\
\cline{2-3}\cline{4-5}
$\quad h$&$\quad  {\rm F}_{\infty}(h,\tau)$&${\rm Order}_{\infty}^h$&$\quad  {\rm F}_{\infty}(h,\tau)$&${\rm Order}_{\infty}^h$\\
\hline
$\quad  1/10$      & $\quad  1.7767{\rm e}-5$  & $ *$           &$\quad  5.2399{\rm e}-4$  &$*$         \\
$\quad  1/20$      & $\quad  1.1166{\rm e}-6$  & $ 3.9920$      &$\quad  1.3105{\rm e}-4$  &$1.9994$    \\
$\quad  1/40$      & $\quad  6.9998{\rm e}-8$  & $ 3.9957$      &$\quad  3.2769{\rm e}-5$  &$1.9997$   \\
$\quad  1/80$      & $\quad  4.4432{\rm e}-9$  & $ 3.9776$      &$\quad  8.1942{\rm e}-6$  &$1.9996$  \\
$\quad  1/160$     & $\quad  3.0875{\rm e}-10$ & $ 3.8471$      &$\quad  2.0495{\rm e}-6$  &$1.9993$
\end{tabular*}}
\rule{\temptablewidth}{1pt}
\end{center}
\end{table}

\begin{table}[tbh!]
\begin{center}
\renewcommand{\arraystretch}{1.12}
\tabcolsep 0pt \caption{Maximum norm errors behavior versus $\tau$-grid size reduction
with the fixed spatial step-size $h=1/100\;(M=10000)$ in Example \ref{example3}}\label{table8}
\def\temptablewidth{0.9\textwidth}
\rule{\temptablewidth}{1pt}
{\footnotesize
\begin{tabular*}{\temptablewidth}{@{\extracolsep{\fill}}ccccc}
&\multicolumn{2}{c}{~~difference scheme \eqref{eq17}--\eqref{eq20a}}&\multicolumn{2}{c}{~~difference scheme in \cite{ZLZ2020}}\\
\cline{2-3}\cline{4-5}
$\quad \tau$&$\quad  {\rm G}_{\infty}(h,\tau)$&${\rm Order}_{\infty}^{\tau}$&$\quad  {\rm G}_{\infty}(h,\tau)$&${\rm Order}_{\infty}^{\tau}$\\
\hline
$\quad  1/ 10$  & $\quad  9.7617{\rm e}-3$   & $*$              &  $\quad  2.4014{\rm e}-3$  & $*$   \\
$\quad  1/ 20$  & $\quad  2.9675{\rm e}-3$   & $ 1.7179$        &  $\quad  6.7036{\rm e}-4$  & $1.8409$   \\
$\quad  1/ 40$  & $\quad  7.5462{\rm e}-4$   & $ 1.9754$        &  $\quad  1.7703{\rm e}-4$  & $1.9209$   \\
$\quad  1/ 80$  & $\quad  1.9088{\rm e}-4$   & $ 1.9831$        &  $\quad  4.5492{\rm e}-5$  & $1.9603$   \\
$\quad  1/ 160$ & $\quad  4.8043{\rm e}-5$   & $ 1.9903$        &  $\quad  1.1531{\rm e}-5$  & $1.9801$
\end{tabular*}}
\rule{\temptablewidth}{1pt}
\end{center}
\end{table}

\begin{figure}[htbp]
 \subfigure[The numerical panorama for $u(x,t)$ ]{\centering
\includegraphics[width=0.5\textwidth]{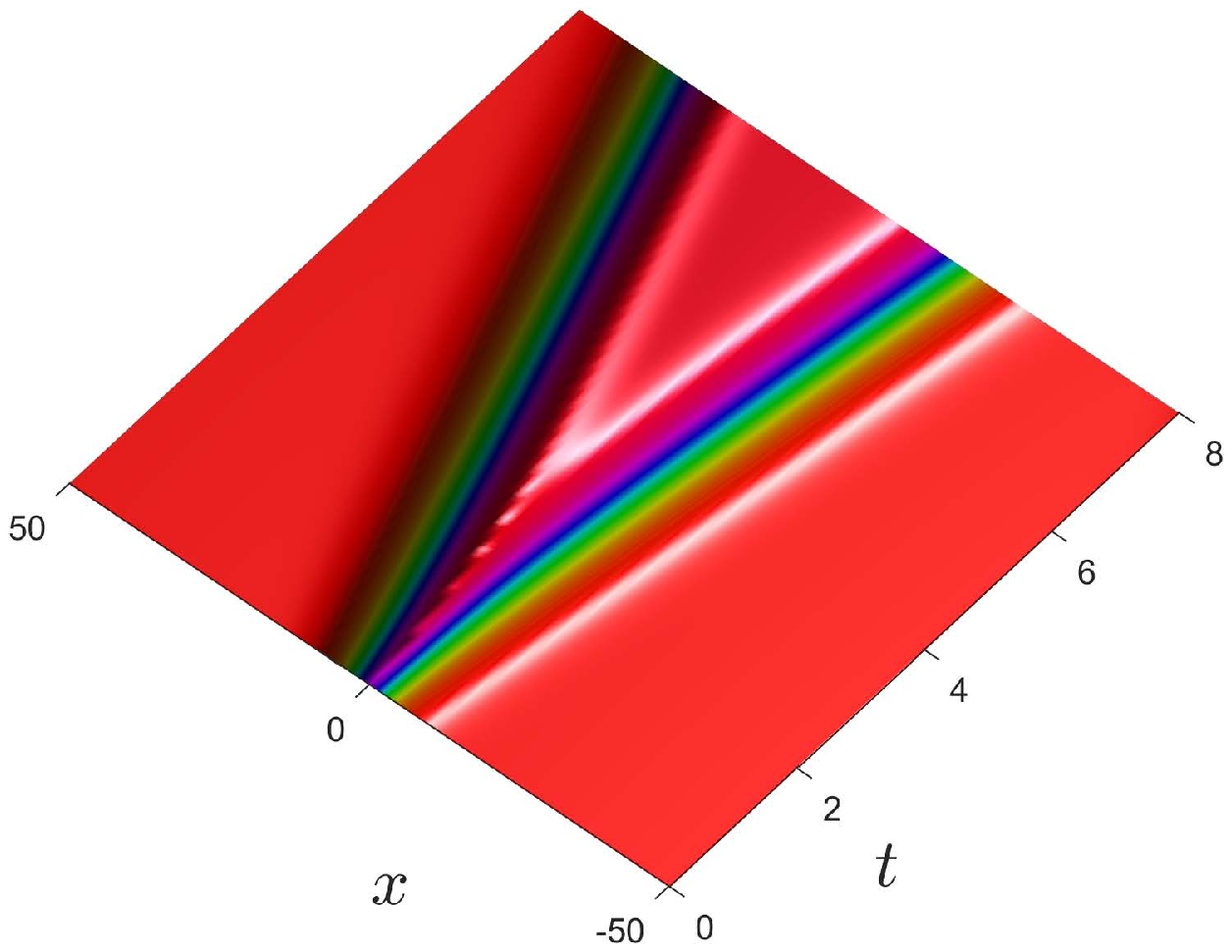}
}\subfigure[Numerical solution profiles]{\centering
\includegraphics[width=0.5\textwidth]{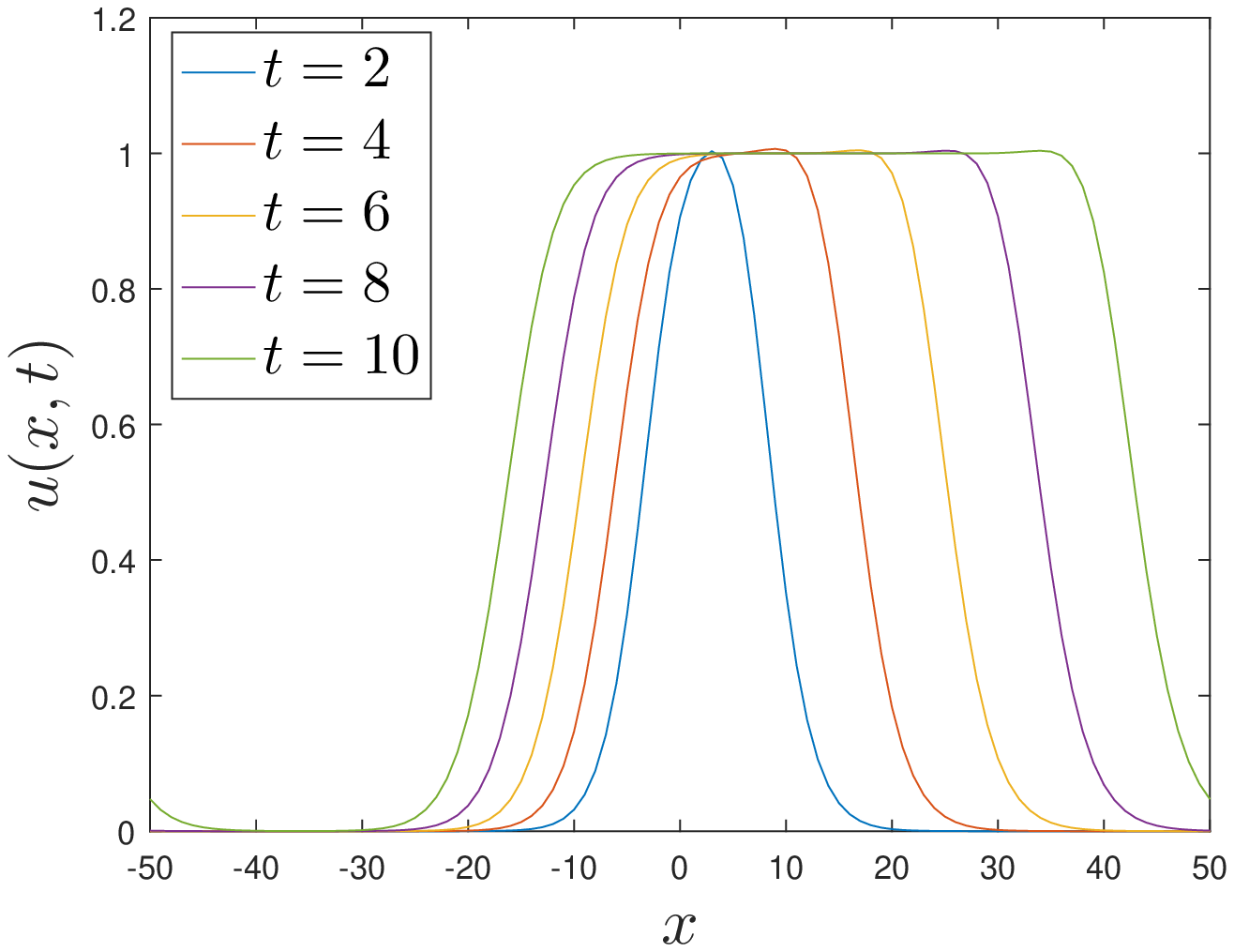}
}
\caption{(a) The numerical solution $t=8$, (b) the solution profiles for $u(x,t)$ with $t=2$, $4$, $6$, $8$, $10$} \label{fig4}
\end{figure}

\begin{figure}[htbp]
 \subfigure[The numerical panorama for $u(x,t)$ ]{\centering
\includegraphics[width=0.5\textwidth]{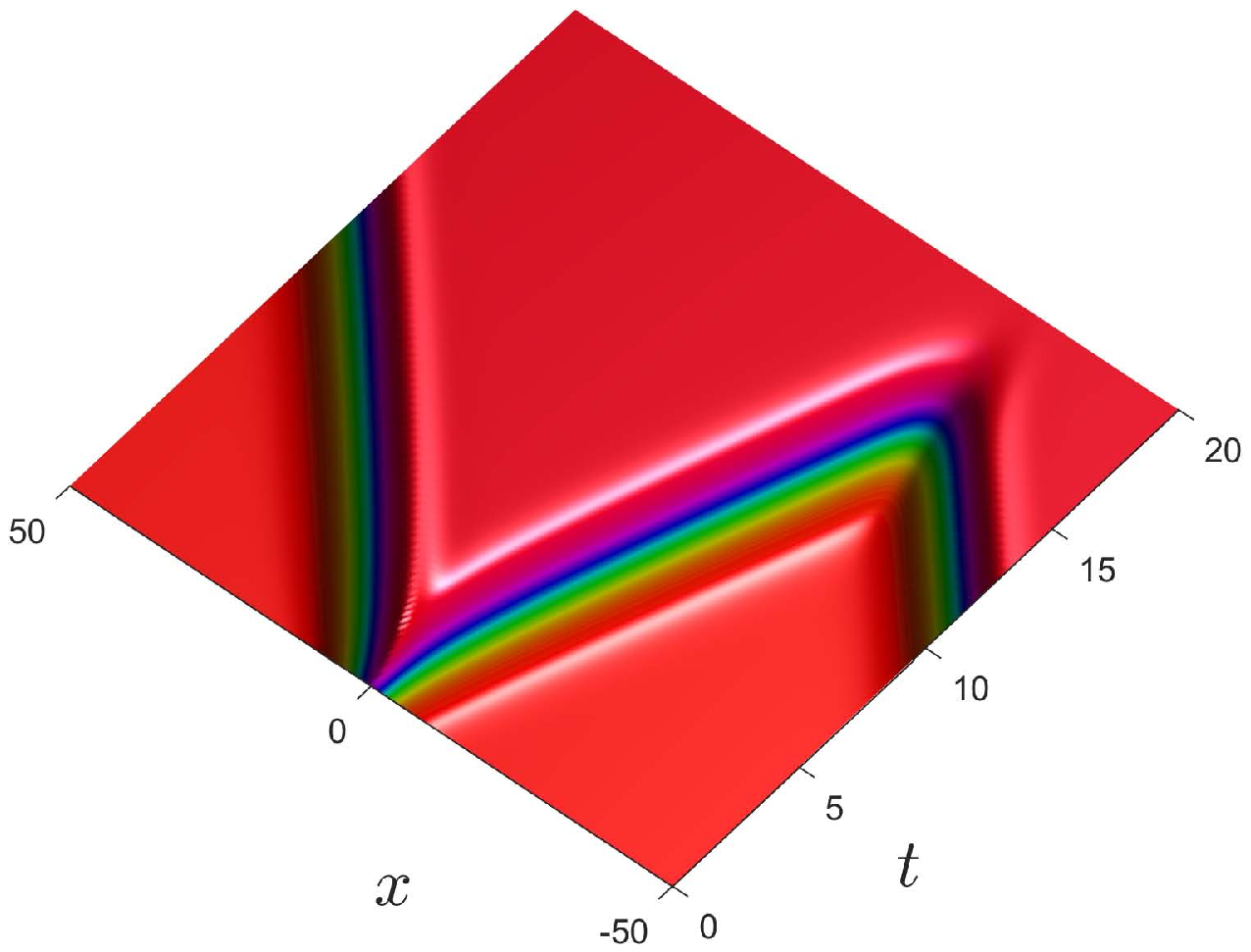}
}\subfigure[Numerical solution profiles]{\centering
\includegraphics[width=0.5\textwidth]{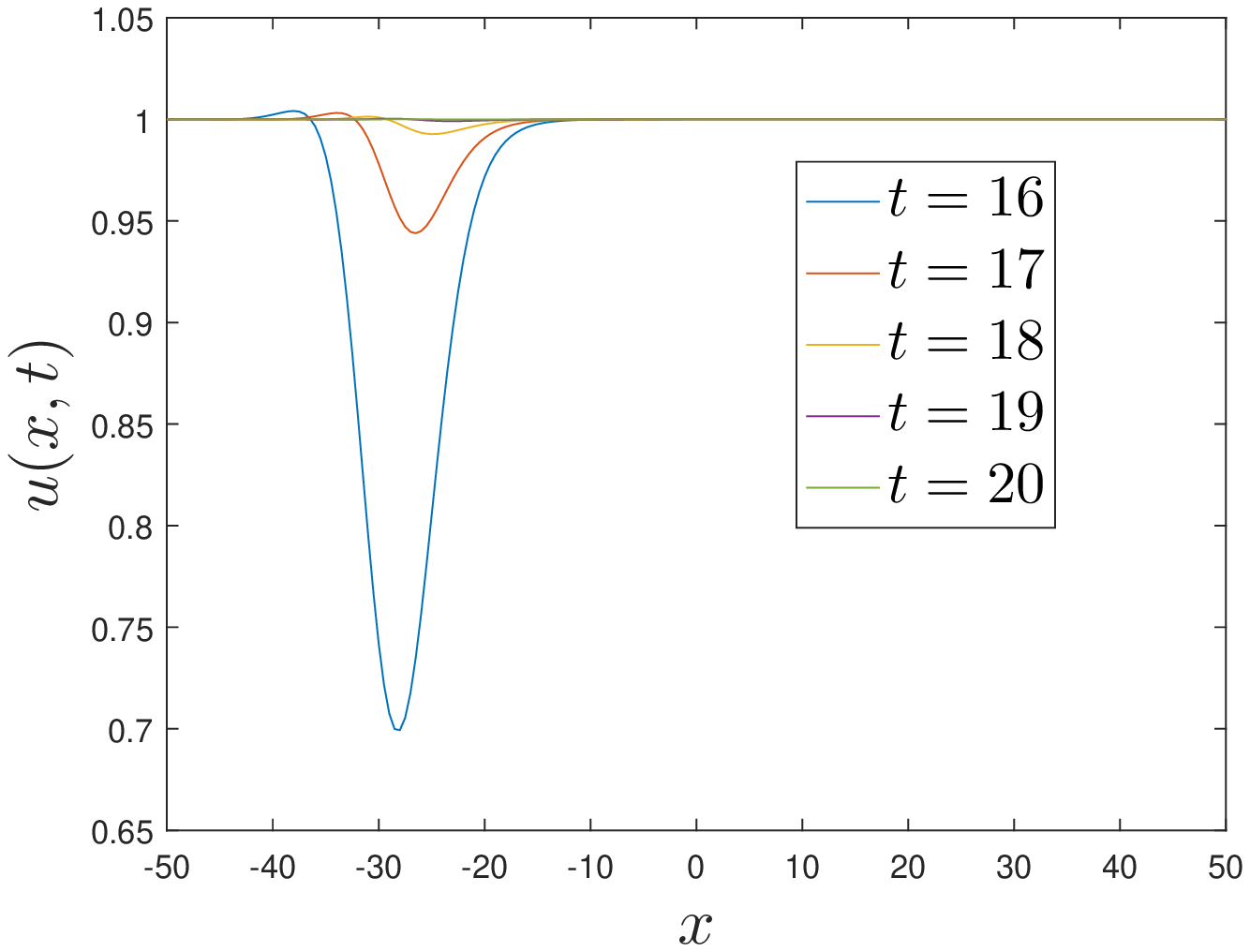}
}
\caption{(a) The numerical solution $t=20$, (b) the solution profiles for $u(x,t)$ with $t=16$, $17$, $18$, $19$, $20$} \label{fig5}
\end{figure}

\section{Conclusions}
\label{Sec:8}
\setcounter{equation}{0}
In the work, incorporating the reduction order method, a three-point four-order compact difference scheme and
a three-level linearized technique, we propose and analyze a linearized implicit, fourth-order compact scheme for the BBMB equation.
We have obtained the unique solvability, conservative invariant and boundedness.
Moreover, we have rigorously proved the maximum error estimation and the stability.
Compared presented scheme with those in the references, the novel fourth-order compact scheme reliably improve the computational accuracy.
Moreover, presented scheme can be extended to the BBMB equation with homogeneous boundary conditions without any difficulty.
In the future, extended our technique and idea to other nonlocal and nonlinear evolution
equations \cite{BH2017,Li2017,SZ2018,LV2019a,LV2019b,Wang2020} will be our on-going project.


\small{

}
\end{document}